\newcommand{\R}{{\mathbb R}}
\newcommand{\N}{{\mathbb N}}
\newcommand{\EE}{{\mathbb E}}
\DeclareMathOperator{\argmin}{argmin}
\newcommand{\cC}{{\mathcal C}}
\newcommand{\cE}{{\mathcal E}}
\newcommand{\cH}{{\mathcal H}}
\newcommand{\cO}{{\mathcal O}}
\newcommand{\cD}{{\mathcal D}}
\newcommand{\cF}{{\mathcal F}}
\newcommand{\cG}{{\mathcal G}}
\newcommand{\cW}{{\mathcal W}}
\newcommand{\demi}{\frac{1}{2}}
\newcommand{\ie}{{\it i.e.}}
\newcommand{\Rb}{\R\cup\{+\infty\}}
\newcommand{\eps}{\varepsilon}
\newcommand{\eqdef}{:=}
\newcommand{\dotp}[2]{\left\langle #1,\,#2 \right\rangle}
\newcommand{\norm}[1]{\left\|{#1}\right\|}
\newcommand{\pa}[1]{\left({#1}\right)}
\newcommand{\brac}[1]{\left[{#1}\right]}
\newcommand{\myparagraph}[1]{\paragraph{{\bf #1}}}
\DeclareMathOperator{\dom}{dom}
\newcommand{\abs}[1]{\left |{#1}\right |}
\newcommand{\bpa}[1]{\Big({#1}\Big)}
\newcommand{\bra}[1]{\left\lbrace{#1}\right\rbrace}
\newcommand{\setcond}[2]{\bra{#1:~#2}}
\newcommand{\DS}[1]{\displaystyle{#1}}
\newcommand{\qandq}{ \enskip \text{and} \enskip }
\renewcommand*{\backrefalt}[4]{%
\ifcase #1 %
(Not cited)%
\or
(Cited on p.~#2)%
\else
(Cited on pp.~#2)%
\fi
}
\begin{document}

\title{On the effect of perturbations in first-order optimization methods with inertia and Hessian driven damping}
\titlerunning{Perturbations of inertial systems with Hessian driven damping}

\author{Hedy Attouch \and Jalal Fadili \and  Vyacheslav Kungurtsev}
\authorrunning{H. Attouch, J. Fadili, V. Kungurtsev} 

\institute{
H. Attouch \at IMAG, Univ. Montpellier, CNRS, Montpellier, France\\
\email{hedy.attouch@umontpellier.fr} 
\and J. Fadili
\at{Normandie Univ, ENSICAEN, CNRS, GREYC, Caen, France}\\
\email{Jalal.Fadili@greyc.ensicaen.fr} 
\and V. Kungurtsev
\at  Department of Computer Science and Engineering, Czech Technical University, Prague.\\\email{vyacheslav.kungurtsev@fel.cvut.cz}
}

%
%

\date{}

\maketitle
 
\begin{abstract}
Second-order continuous-time dissipative dynamical systems with viscous and Hessian driven damping have inspired effective first-order algorithms for solving convex optimization problems. While preserving the fast convergence properties of the Nesterov-type acceleration, the  Hessian driven damping makes it possible to significantly attenuate the oscillations. To study the stability of these algorithms with respect to perturbations, we analyze the behaviour of the corresponding continuous systems when the gradient computation is subject to exogenous additive errors. We provide a quantitative analysis of the asymptotic behaviour of two types of systems, those with implicit and explicit Hessian driven damping. We consider convex, strongly convex, and non-smooth objective functions defined on a real Hilbert space and show that, depending on the formulation, different integrability conditions on the perturbations are sufficient to maintain the  convergence rates of the systems. We  highlight the differences between the implicit and explicit Hessian damping, and in particular point out  that the assumptions on the objective and perturbations needed in the implicit case are more stringent than in the explicit case.
\end{abstract}




\keywords{Hessian driven damping; damped inertial dynamics; accelerated convex optimization; convergence rates; Lyapunov analysis; perturbation; errors.}

\noindent \textbf{AMS subject classification} 37N40, 46N10, 49M30, 65B99, 65K05, 65K10, 90B50, 90C25

\section{Introduction}
The continuous-time dynamic perspective of optimization algorithms, which can be viewed as temporal discretization schemes thereof, offers an insightful and powerful framework for the study of the behaviour of these algorithms. In this paper, we study inertial systems involving both viscous and Hessian-driven damping, where the first-order gradient information is only accessible up to some {\emph{exogenous additive error}}. 

\subsection{Problem statement}
Throughout the paper, we make the following standing assumptions:
\[
\boxed{
\text{
$f$ is a convex function on a real Hilbert space $\cH$, and $S \eqdef \argmin_\cH f\neq \emptyset$.\hspace{1cm}}
} 
\]

\noindent We will study perturbed versions of two second-order ordinary differential equations (ODE). {They differ from each other in that the Hessian driven damping appears  explicitly in one and implicitly in the other}.

\subsubsection{Explicit Hessian}
The first system we look at, which was proposed in \cite{attouch2019first} (see also \cite{APR1}), takes the form
\begin{equation}\tag{ISEHD}\label{eq:origode}
\ddot{x}(t)+\gamma(t)\dot{x}(t)+\beta(t)\frac{d}{dt}(\nabla f(x(t)))+b(t)\nabla f(x(t)) = 0,
\end{equation}
where $f \in \cC^1(\cH)$,  $\gamma,\beta,b:[t_0,+\infty[ \to \R_+$ are continuous functions, and $t_0>0$ is the initial time. The coefficients $(\gamma,\beta,b)$ have a physical interpretation corresponding to natural phenomena:
\begin{enumerate}[label=$\bullet$]
\item $\gamma(t)$ is the viscous damping coefficient,
\item $\beta(t)$ is the Hessian-driven damping coefficient (which will be made clear),
\item $b(t)$ is the  time scaling coefficient (see \cite{ACR-SIOPT}).
\end{enumerate}

\noindent We term the above ODE an Inertial System with {\emph{Explicit}} Hessian Damping (ISEHD for short), since 
\[
\frac{d}{dt}(\nabla f(x(t))) = \nabla^2 f(x(t)) \dot x(t) ,
\] 
when $f$ is of class $\cC^2(\cH)$. Throughout the paper, we consider \eqref{eq:origode} with the particular choice of parameters 

\begin{center}
$\gamma(t) = \displaystyle{\frac{\alpha}{t}}$, \; $\alpha \geq 0$,\; $\beta(t) \equiv \beta > 0$ and $b(t) \equiv 1$.
\end{center}

This choice of the viscous damping parameter $\gamma(t) = \frac{\alpha}{t}$ is justified by its direct link with the accelerated gradient method of Nesterov \cite{Nest1,Nest2}, as shown in   \cite{AC10},  \cite{attouch2018fast},  \cite{ADR}, \cite{CD}, \cite{SBC}.
Related systems have been considered  in \cite{LJ} from the closed loop control perspective and in \cite{SDJS} by means of 
high-resolution of differential equations. 

\subsubsection{Implicit Hessian}

The second system we consider, inspired by~\cite{alecsa2019extension} (see also \cite{MJ} for a related autonomous system  in the case of a strongly convex function $f$), is
\begin{equation}\tag{ISIHD}\label{eq:odetwo}
\ddot{x}(t)+\frac{\alpha}{t} \dot{x}(t)+\nabla f\Big(x(t)+\beta(t)\dot{x}(t)\Big)=0 ,
\end{equation}
where $\alpha \ge 3$ and $\beta(t)=\gamma+\frac{\beta}{t}$, $\gamma, \; \beta \geq 0$. We coin this ODE an Inertial System with {\emph{Implicit}} Hessian Damping (ISIHD for short). The rationale justifying our use of the term ``implicit" comes from the observation that by a Taylor expansion (as $t \to +\infty$ we have $\dot{x}(t) \to 0$ which justifies using Taylor expansion), one has
\[
\nabla f\pa{x(t)+\beta(t)\dot{x}(t)}\approx \nabla f (x(t)) + \beta(t)\nabla^2 f(x(t))\dot{x}(t) ,
\]
hence making the Hessian damping appear indirectly in (\ref{eq:odetwo}). This ODE was found to have a smoothing effect on the energy error and oscillations. 

\subsubsection{Exogenous additive error}
We are interested in the situation where $\nabla f(x(t))$ is always evaluated with an exogenous additive error $e(t)$. With the choice of parameters made above, the perturbed dynamics of~\eqref{eq:origode} and \eqref{eq:odetwo} are written
\begin{equation}
\boxed{
\ddot{x}(t)+\frac{\alpha}{t}\dot{x}(t)+\beta\frac{d}{dt}\Big(\nabla f(x(t))+e(t)\Big)+ \nabla f(x(t))+e(t)=0 ,\tag{ISEHD-\textsc{Pert}} \hspace{1cm}\label{eq:origode_a}
}
\end{equation}
\begin{equation}
\boxed{
\ddot{x}(t)+\frac{\alpha}{t} \dot{x}(t)+\nabla f\Big(x(t)+\beta(t)\dot{x}(t)\Big)+e(t)=0. \hspace{2.9cm}\tag{{ISIHD-\textsc{Pert}}} \label{eq:odetwo_a}
}
\end{equation}
For system \eqref{eq:origode_a},  the overall perturbation error affecting the system is 
$\beta\dot{e}(t) + e(t).$
Because the Hessian appears explicitly, both the error on the gradient and its derivative appear. It can then be anticipated that assumptions regarding both $e(t)$ and $\dot{e}(t)$, in particular their integrability, will be instrumental in deriving any convergence guarantees. On the other hand, in the system \eqref{eq:odetwo_a} with implicit Hessian damping, the error perturbation $e(t)$ appears without its time derivative. Naturally, we will see in this case that convergence results will be derived without any assumptions on the time derivative of the error. While this may be seen as an advantage at first glance, this comes at a price. Indeed, as we will also see, to maintain fast convergence guarantees, the integrability requirements on the error $e(t)$ will be more stringent for \eqref{eq:odetwo_a} than for \eqref{eq:origode_a}, \ie, higher-order moments of $e(t)$ will be required to be finite. We anticipate that when it comes to discrete algorithms, the assumptions on the objective and perturbations needed in the implicit case are more stringent than in the explicit case. We plan to study these questions in a future work. Note that similar questions arise when the perturbation is attached to a Tikhonov regularization term with an asymptotically vanishing coefficient \cite{BCL}. 

One of our motivations for the above additive perturbation model originates from optimization, where the gradient may be accessed only inaccurately, either because of physical or computational reasons. The prototype example we think of is
\[
f(x) = \EE_{\xi}[F(x,\xi] ,
\] 
where $\EE_{\xi}[\cdot]$ is expectation with respect to the random variable $\xi$, and $F(\cdot,\xi) \in \cC^1(\cH)$ for any $\xi$. This is a popular setting in numerous applications (imaging, statistical learning, etc.), where computing $\nabla f(x)$ (or even $f(x)$) is either impossible or computationally very expensive. Rather, one draws $m$ independent samples of $\xi$, say $(\xi_i)_{1 \leq i \leq m}$, and compute the average estimate 
\[
\widehat{\nabla f}(x) = \frac{1}{m} \sum_{i=1}^m \nabla F(x,\xi_i) .
\] 
In our notation, the error is then $e(t) = \nabla f(x(t)) - \widehat{\nabla f}(x(t))$. Under independence and mild assumptions, one has by the law of iterated logarithm that $\|e(t)\|=\cO\pa{\sqrt{\frac{\log(\log(m))}{m}}}$ almost surely. Thus, to make this error vanish or even integrable, one has to take $m(.)$ an increasing function of $t$ at least at the rate $\cO(t^{2(1+\delta)})$ for $\delta > 0$. At this stage, some readers may have expected a smallness condition on the perturbation rather than integrability conditions, i.e., of $\|e(t)\|=\mathcal{O}(r(t))$ for some $r(t)$ such that $\lim\limits_{t\to\infty} r(t)=0$. Of course, integrability is stronger as it obviously implies that the error function, whenever it converges, will vanish asymptotically. However, one has to keep in mind that our goal is not only to establish convergence (and rate of convergence) of the objective values to a "perturbation dominated region" around the optimal value, but to show additional convergence guarantees, in particular the important matter of weak convergence of the trajectory. Deriving convergence of trajectories is typically much more challenging than the objective and cannot be proved under a mere smallness condition on the error.

\subsection{Contributions}
In \cite{attouch2019first} (resp. \cite{alecsa2019extension}), which studied the unperturbed system \eqref{eq:origode} (resp. \eqref{eq:odetwo}),  fast convergence rates were obtained for  the objective, velocities and gradients.
Our main contribution in this paper is to analyze the robustness and stability of these systems, by quantifying their convergence properties in the presence of errors. We do this both in the general convex case and in the strongly convex case. We also study the case where the function $f$ is non-smooth convex by proposing a first order  formulation in time and space, with existence results and Lyapunov analysis. The main motivation for our work is to pave the way for the design and study of provably accelerated optimization algorithms that appropriately discretize the above dynamics while handling inexact evaluations of the gradient with deterministic and/or stochastic errors. The extension to the discrete setting of the results here will be the focus of a forthcoming paper.

\subsection{Related Works}

Due to  the importance of the subject in optimization and control, several articles have been devoted to  the study of perturbations  in dissipative inertial systems and in the corresponding 
 accelerated first order algorithms.  The subject was first considered in the case of a fixed  viscous damping, \cite{acz,HJ1}. Then it was studied within the framework of  the accelerated gradient method of  Nesterov, and of the corresponding inertial dynamics with vanishing viscous damping, see \cite{SDJS,AC2R-JOTA,attouch2018fast,AD15,SLB,VSBV}. In the presence of the additional Hessian driven damping, first results have been obtained in \cite{APR1,attouch2019first,attouchiterates2021} in the case of a smooth function. To the best of our knowledge, our work is the first to consider these questions in full generality and in presence of perturbations.

\subsection{Contents}
In Section~\ref{s:wellposed}, we prove that the two systems are well-posed both in the smooth and non-smooth cases. In Section~\ref{s:convex}, we study the convex case, and establish convergence rates for both systems under appropriate integrability assumptions on the error. In  Section~\ref{s:sconvex}, we consider the strongly convex case. Section~\ref{s:nonsmooth}\; is devoted to studying non-smooth $f$. In Section~\ref{s:num}, we present some numerical illustrations of the results. In Section~\ref{s:conc}, we draw key conclusions and present some perspectives. 

\subsection{Main notations}
$\cH$ is a real Hilbert space,  $\dotp{\cdot}{\cdot}$ is the scalar product on $\cH$ and $\norm{\cdot}$ is the corresponding norm. $\Gamma_0(\cH)$ is the class of proper, lower semicontinuous (lsc) and  convex functions from $\cH$ to $\Rb$. A function  $g: \cH \to \Rb$ is  $\mu$-strongly convex ($\mu >0$) if $g - \frac{\mu}{2}\| \cdot\|^2$ is convex. For $g \in \Gamma_0(\cH)$, its domain is $\dom(g) \eqdef  \setcond{x\in\cH}{g(x) < +\infty}$. $\partial g$ denotes the (convex) subdifferential operator of $g$. When $g$ is differentiable at $x\in\cH$, then $\partial g(x)=\bra{\nabla g(x)}$. We also denote $\dom(\partial g) \eqdef \setcond{x \in \cH}{\partial g(x) \neq \emptyset}$.

$\cC^s(\cD)$ is the class of $s$-continuously differentiable functions on $\cD$, $\cD$ will be specified in the context. For $T > t_0 $ and $p \geq 1$, $L^p(t_0,T; \cH)$ is the Lebesgue space of measurable functions $x: t \in [t_0,T] \mapsto x(t) \in \cH$ such that $\int_{t_0}^T \norm{x(t)}^p dt < +\infty$. $\cW^{1,1} (t_0,T; \cH)$ is the Sobolev space of functions $x(.) \in L^1(t_0,T; \cH)$ with distributional derivative $\dot{x}(\cdot)\in L^{1} (t_0,T;\cH)$. We will also invoke the notion of a strong solution to a differential inclusion, see \cite[Definition 3.1]{Bre1}, that will be given precisely in Section \ref{section-gen-existence}. The reader interested mostly in quantitative convergence estimates can skip the corresponding section.

We take $t_0 >0$ as the origin of time. This is justified by the singularity of the viscous damping coefficient $\frac{\alpha}{t}$ at the origin. This is not restrictive since we are interested in asymptotic analysis. We denote $S$ to be the set of minimizers of $f$, \ie, $S \eqdef \argmin\limits_{x\in\cH} f(x)$ which is assumed nonempty, and $x^\star$ to be an arbitrary element of $S$ (unique in the case of strongly convex $f$). We denote $\bar{f} \eqdef \inf\limits_{x\in\cH} f(x)$.

\section{Well-posedness}\label{s:wellposed}
When $\beta > 0$, the presence of Hessian driven damping in the inertial dynamics makes it possible to reformulate the equations as  first-order systems both in time and in space, without explicit evaluation of the Hessian. This will allow us to extend the existence of trajectories and the convergence results to the case  $f \in \Gamma_0(\cH)$, by simply replacing the gradient of $f$ with the subdifferential $\partial f$. This approach was initiated in \cite{aabr} and used in \cite{APR1} for the unperturbed case.
\subsection{Explicit Hessian Damping}
\subsubsection{Formulation as a first-order system}
 Let us start by establishing this equivalence in the case of a smooth function $f$.
\begin{theorem}\label{Thm-first-order-system}
Let $f:\cH\to\R$ be a $\cC^2(\cH)$ function and $e: [t_0,+\infty[ \to \cH$ be $\cC^1(\cH)$. Suppose that $\alpha\geq0$, $\beta>0$. Let $(x_0,\dot x_0)\in \cH \times \cH$. The 
following statements are equivalent:
\begin{enumerate}[label=\arabic*.]
\item \label{item:Thm-first-order-system_1}
$x:[t_0,+\infty [ \to \cH$ is a solution trajectory of \eqref{eq:origode_a}
with the initial conditions $x(t_0)=x_0$, $\dot x(t_0)=\dot x_0$. 
\item \label{item:Thm-first-order-system_2}
$(x,y):[t_0,+\infty [ \to \cH \times \cH$ is a solution trajectory of the first-order system 
\begin{equation}\label{eq:ISEHDequiv1stode}
\begin{cases}
\dot x(t) +  \beta (\nabla f (x(t))+ e(t)) - \pa{\frac{1}{\beta} - \frac{\alpha}{t}} x(t) + \frac{1}{\beta}y(t) &= 0 \\
\dot{y}(t)-\pa{\frac{1}{\beta} - \frac{\alpha}{t} + \frac{\alpha \beta}{t^2}} x(t) + \frac{1}{\beta} y(t) &= 0,
\end{cases}
\end{equation}
\end{enumerate}
with initial conditions $x(t_0)=x_0$, 
$y(t_0)=-\beta(\dot x_0+\beta\nabla f(x_0))+(1-\beta\alpha/t_0)x_0 -\beta^2 e(t_0)$.
\end{theorem}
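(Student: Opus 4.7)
The equivalence is computational, but the key insight is to identify the correct auxiliary variable $y(t)$ that absorbs the explicit Hessian term $\beta\frac{d}{dt}(\nabla f(x(t))+e(t))$ into a first-order expression. The natural guess, matching the stated initial condition, is
\[
y(t) \eqdef -\beta\dot x(t) -\beta^2\pa{\nabla f(x(t))+e(t)} + \pa{1-\tfrac{\alpha\beta}{t}} x(t),
\]
so that the first equation of \eqref{eq:ISEHDequiv1stode} is nothing but the definition of $y$ rewritten after dividing through by $\beta$. This is the ansatz that Attouch--Peypouquet--Redont used in the unperturbed setting, and here one simply needs to track the extra $e(t)$ terms carefully.

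For the direction \ref{item:Thm-first-order-system_1} $\Rightarrow$ \ref{item:Thm-first-order-system_2}, the plan is to set $y$ by the formula above (which gives equation one of \eqref{eq:ISEHDequiv1stode} for free), then differentiate $y$ in $t$. This yields
\[
\dot y(t) = -\beta\ddot x(t) - \beta^2 \tfrac{d}{dt}\bpa{\nabla f(x(t))+e(t)} + \tfrac{\alpha\beta}{t^2} x(t) + \pa{1-\tfrac{\alpha\beta}{t}}\dot x(t).
\]
Using \eqref{eq:origode_a} to replace $\ddot x(t) + \beta\tfrac{d}{dt}(\nabla f(x(t))+e(t))$ by $-\tfrac{\alpha}{t}\dot x(t) - (\nabla f(x(t))+e(t))$, the terms involving $\dot x$ collapse and after rearranging one is left with
\[
\dot y(t) = \pa{\tfrac{1}{\beta}-\tfrac{\alpha}{t}+\tfrac{\alpha\beta}{t^2}} x(t) - \tfrac{1}{\beta} y(t),
\]
which is the second equation of \eqref{eq:ISEHDequiv1stode}. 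The initial condition on $y(t_0)$ is checked by plugging $t=t_0$ into the defining formula for $y$.

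For the converse direction \ref{item:Thm-first-order-system_2} $\Rightarrow$ \ref{item:Thm-first-order-system_1}, I reverse the computation: solve the first equation of \eqref{eq:ISEHDequiv1stode} for $y(t)$ (exactly the formula above), differentiate, and equate with the right-hand side of the second equation. The $x(t)$ and $\dot x(t)$ contributions cancel in the same way as before, and what remains is precisely
\[
\ddot x(t) + \tfrac{\alpha}{t}\dot x(t) + \beta\tfrac{d}{dt}\pa{\nabla f(x(t))+e(t)} + \nabla f(x(t))+e(t) = 0,
\]
i.e. \eqref{eq:origode_a}. The initial condition $x(t_0)=x_0$ is given; the condition $\dot x(t_0)=\dot x_0$ follows from substituting the prescribed $y(t_0)$ back into the first equation at $t=t_0$ and solving.

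The proof is thus essentially a guided algebraic manipulation and the main (mild) obstacle is keeping careful track of the two terms $\tfrac{\alpha}{t}$ and $\tfrac{\alpha\beta}{t^2}$ that arise from differentiating the time-dependent coefficient $1-\tfrac{\alpha\beta}{t}$ — these are what make the second row of \eqref{eq:ISEHDequiv1stode} differ from its autonomous counterpart. The regularity hypothesis $f\in\cC^2(\cH)$ and $e\in\cC^1$ is used solely to ensure that $\tfrac{d}{dt}(\nabla f(x(t))+e(t))$ is a legitimate classical derivative, so that both ODE formulations can be manipulated pointwise.
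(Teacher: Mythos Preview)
Your proof is correct and follows essentially the same approach as the paper's own argument: both directions are obtained by defining $y(t)$ via the first equation of \eqref{eq:ISEHDequiv1stode}, differentiating, and using \eqref{eq:origode_a} to pass between the two formulations. The paper presents the direction \ref{item:Thm-first-order-system_2} $\Rightarrow$ \ref{item:Thm-first-order-system_1} first and substitutes for $\dot y$ then $y$ sequentially rather than working from your explicit closed formula for $y(t)$, but the computations are identical in substance.
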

\begin{proof}
\textit{{\ref{item:Thm-first-order-system_2} }}$\Rightarrow$ \textit{{\ref{item:Thm-first-order-system_1} }} 
Differentiating the first equation of \eqref{eq:ISEHDequiv1stode} gives
\begin{equation}\label{eq:fos01}
\ddot x(t) +  \beta \pa{\nabla^2 f (x(t)) \dot{x}(t) + \dot{e}(t)} -
\frac{\alpha}{t^2}x(t)- \pa{\frac{1}{\beta} - \frac{\alpha}{t}} \dot x(t)+\frac{1}{\beta}\dot y(t)  =  0 .
\end{equation}
Replacing $\dot y(t)$ by its expression as given  by the second equation of \eqref{eq:ISEHDequiv1stode} gives
\begin{equation}
\ddot x(t) +  \beta \pa{\nabla^2 f (x(t)) \dot{x}(t) + \dot{e}(t)} -
\frac{\alpha}{t^2}x(t)- \pa{ \frac{1}{\beta} -  \frac{\alpha}{t} } \dot x(t) 
+\frac{1}{\beta}\pa{\pa{ \frac{1}{\beta} -  \frac{\alpha}{t} + \frac{\alpha \beta}{t^2}} x(t)
 -\frac{1}{\beta} y(t)}  =  0 .\label{eq:fos02}
\end{equation}
Then replace $y(t)$ by its expression as given  by the first equation of \eqref{eq:ISEHDequiv1stode}
\begin{multline*}
\ddot x(t) +  \beta \pa{\nabla^2 f (x(t)) \dot{x}(t) + \dot{e}(t)} -
\frac{\alpha}{t^2}x(t)- \pa{ \frac{1}{\beta} -  \frac{\alpha}{t} } \dot x(t) \\
+\frac{1}{\beta}\pa{\pa{ \frac{1}{\beta} -  \frac{\alpha}{t} +  \frac{\alpha \beta}{t^2}  } x(t)
+ \dot x(t) +  \beta (\nabla f (x(t))+ e(t)) - \pa{ \frac{1}{\beta} -  \frac{\alpha}{t}}  x(t)}  =  0 .\label{eq:fos03}
\end{multline*}
After simplification of the above expression, we obtain \eqref{eq:origode_a}.

\medskip

\textit{{\ref{item:Thm-first-order-system_1}}} $\Rightarrow$ \textit{{\ref{item:Thm-first-order-system_2}}} Define $y(t)$ by  the first equation of \eqref{eq:ISEHDequiv1stode}. Differentiating $y(t)$ and using equation \eqref{eq:origode_a} allows one to eliminate $\ddot{x}(t)$, which finally gives the second equation of \eqref{eq:ISEHDequiv1stode}. \qed
\end{proof}

\subsubsection{Existence and uniqueness of a solution}\label{section-gen-existence}

Capitalizing on the result of Theorem~\ref{Thm-first-order-system}, the following first order formulation assists in providing a meaning to our system when $f \in \Gamma_0(\cH)$.
It is obtained by substituting the subdifferential $\partial f$ for the gradient $\nabla f$ in the first-order formulation \eqref{eq:ISEHDequiv1stode}.
\begin{definition}
Let $\alpha\geq0$, $\beta>0$ and $f \in \Gamma_0(\cH)$. {Given
$(x_0, y_0) \in  \dom(f) \times \cH $, the Cauchy problem for the perturbed inertial system with explicit generalized Hessian driven damping is defined by}
\begin{equation}\label{eq:fos2}
\begin{cases}
\dot x(t)+\beta(\partial  f(x(t)) + e(t)) -\pa{\frac{1}{\beta}-\frac{\alpha}{t}}x(t)+\frac{1}{\beta}y(t) & \ni 0 \\
\dot{y}(t) -\pa{\frac{1}{\beta}-\frac{\alpha}{t}+\frac{\alpha\beta}{t^2}}x(t) +\frac{1}{\beta} y(t) & = 0 \\
x(t_0)=x_0, y(t_0)=y_0 .
\end{cases}
\end{equation}
\end{definition}
Let us formulate \eqref{eq:fos2} in a condensed form as an evolution equation in the product space $\cH \times \cH$.
Setting $Z(t) = (x(t), y(t)) \in  \cH \times \cH$, \eqref{eq:fos2}
can be equivalently written 
\begin{equation}\label{eq:fos3}
\dot{Z}(t) + \partial \cG(Z(t)) + \cD(t, Z(t)) \ni 0, \quad {Z(t_0)=(x_0,y_0)},
\end{equation}
where $\cG \in \Gamma_0(\cH \times \cH)$ is the function defined by $\cG (Z) = \beta f (x)$, and the time-dependent operator $\cD:\ [t_0,+\infty[ \times \cH \times \cH \to \cH \times \cH$ is given by
\begin{equation}\label{eq:fos5}
\cD(t,Z)=
  \pa{\beta e(t)-\pa{\frac{1}{\beta}-\frac{\alpha}{t}}x+\frac{1}{\beta}y,
    -\pa{\frac{1}{\beta}-\frac{\alpha}{t}+\frac{\alpha\beta}{t^2}}x
 +\frac{1}{\beta} y } .
\end{equation}

The differential inclusion \eqref{eq:fos3} is governed by the sum of the maximal monotone operator $\partial \cG$ (a convex subdifferential) and the time-dependent affine continuous operator $\cD (t,\cdot)$. The existence and uniqueness of a global solution for the corresponding Cauchy problem is a consequence of the general theory of evolution equations governed by maximally monotone operators. In this setting, we need to invoke the notion of strong solution that we make precise now.

\begin{definition}\label{defsolforte}
Given $g \in \Gamma_0(\cH)$, and an operator $D:[t_0,+\infty[\times \cH\to\cH$, we say that $z:[t_0,T]\to \cH$ 
is a strong solution trajectory on $[t_0,T]$ of the differential inclusion
\begin{equation}\label{eq:fosdef}
\dot z(t)+\partial  g(z(t))+D(t,z(t)) \ni 0,
\end{equation}
if the following properties are satisfied:
\begin{enumerate}[label=\arabic*.]
\item \label{defsolforte-1}
$z$ is continuous on $[t_0,T]$ and absolutely continuous on any compact subset of $]t_0,T]$;
\item \label{defsolforte-3} 
$z(t)\in\dom(\partial  g)$ for almost every $t\in]t_0,T]$, and 
 \eqref{eq:fosdef} is verified for almost every $t\in]t_0,T]$.
\end{enumerate}

\noindent $z:[t_0,+\infty[\to \cH$ is  a global strong solution of \eqref{eq:fosdef}, if it is a strong solution  on $[t_0,T]$ for all $T>t_0$. 
\end{definition}

The existence and uniqueness of a global strong solution of the Cauchy problem \eqref{eq:fos2} is established in the following theorem.

\begin{theorem} \label{Thm-existence}
Let $f \in \Gamma_0(\cH)$, $\alpha \geq 0$ and $\beta >0$.
Suppose that $e \in L^2 (t_0, T; \cH)$ for every $T > t_0$. Then, for any Cauchy data $(x_0, y_0) \in  \dom(f) \times \cH $, there exists a unique global strong solution $(x,y):[t_0, +\infty[ \to \cH \times \cH $ of \eqref{eq:fos2} satisfying the initial condition $x(t_0)=x_0$, $y(t_0)= y_0$. Moreover, this solution exhibits the following properties:
\begin{enumerate}[label={\rm (\roman*)}]
\item \label{existence-1}
$y \in \cC^1([t_0,+\infty[)$, and 
$ 
\dot{y}(t)
  -\pa{\frac{1}{\beta}-\frac{\alpha}{t}+\frac{\alpha\beta}{t^2}}x(t)
   +\frac{1}{\beta} y(t) =0,$ 
for  $t\geq t_0$;
\item \label{existence-2}
$x$ is absolutely continuous on $[t_0,T]$ and $\dot x\in L^2(t_0,T; \cH)$ for 
all $T>t_0$;
\item \label{existence-3}
$x(t)\in\dom(\partial  f)$ for all $t>t_0$;
\item \label{existence-4}
$x$ is Lipschitz continuous on any compact subinterval of $]t_0,+\infty[$;
\item 
the function $ t\mapsto f(x(t))$ is absolutely continuous on 
$[t_0,T]$ for all $T>t_0$;
\item \label{existence-6}
there exists a function $\xi:[t_0,+\infty[\to \cH$ such that 
\begin{enumerate}[label={\rm (\alph*)}]
\item \label{existence-6:a}
  $\xi(t)\in\partial  f(x(t))$ for all $t>t_0$;
\item \label{existence-6:b}
  $\dot x(t)+\beta\xi(t) + \beta e(t)
  -\pa{\frac{1}{\beta}-\frac{\alpha}{t}}x(t)+\frac{1}{\beta}y(t)=0$  
  for almost every $t>t_0$;
\item \label{existence-6:c}
  $\xi\in L^2(t_0,T; \cH)$ for all $T>t_0$;
\item \label{existence-6:d}
  $\displaystyle{\frac{d}{dt}f(x(t))}=\langle\xi(t),\dot x(t)\rangle$ for almost every 
  $t>t_0$.
\end{enumerate}
\end{enumerate}
\end{theorem}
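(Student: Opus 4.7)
The plan is to recast the Cauchy problem \eqref{eq:fos2} as the evolution inclusion \eqref{eq:fos3} in $\cH\times\cH$ and then invoke the classical theory of maximal monotone operators perturbed by a Lipschitz time-dependent operator with an $L^2$ exogenous term.

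First I would verify the structural ingredients. Since $f\in\Gamma_0(\cH)$, the function $\cG(x,y)=\beta f(x)$ belongs to $\Gamma_0(\cH\times\cH)$, so $\partial\cG(x,y)=\beta\partial f(x)\times\{0\}$ is maximal monotone, and $\dom(\partial\cG)=\dom(\partial f)\times\cH$. For the perturbation $\cD$ in \eqref{eq:fos5}, $\cD(t,\cdot)$ is affine continuous with operator norm bounded, on every compact subset of $[t_0,+\infty[$, by a constant depending only on $\alpha,\beta,t_0,T$; this is where the choice $t_0>0$ is essential, as it neutralizes the singularity of $1/t$ and $1/t^2$. The inhomogeneous part $t\mapsto\cD(t,0)=(\beta e(t),0)$ belongs to $L^2(t_0,T;\cH\times\cH)$ by assumption on $e$.

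Second, I would invoke the standard existence result for Cauchy problems of the form $\dot Z(t)+\partial\cG(Z(t))\ni F(t,Z(t))$, with $F(t,\cdot):=-\cD(t,\cdot)$ Lipschitz of locally bounded constant and $F(\cdot,Z)\in L^2_{\mathrm{loc}}$. One fixes $T>t_0$ and runs a Banach fixed-point argument on $\cC([t_0,T];\cH\times\cH)$ equipped with a suitable exponentially weighted norm: for any $\widetilde Z$, the unperturbed problem $\dot Z+\partial\cG(Z)\ni F(t,\widetilde Z(t))$ admits a unique strong solution by \cite[Theorem 3.17]{Bre1} since the right-hand side lies in $L^2(t_0,T;\cH\times\cH)$, and the associated map $\widetilde Z\mapsto Z$ is a strict contraction for the weighted norm. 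This yields existence and uniqueness of a strong solution on $[t_0,T]$, and the linear growth of $\cD(t,\cdot)$ together with a Gronwall estimate prevents finite-time blow-up, giving a global strong solution on $[t_0,+\infty[$.

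Third, I would translate $Z=(x,y)$ back into the individual properties. Item~\ref{existence-2} is the definition of a strong solution of \eqref{eq:fos3}. Item~\ref{existence-3} follows from $Z(t)\in\dom(\partial\cG)$ for almost every $t>t_0$, combined with the fact that Brezis's regularization result for the semigroup generated by $\partial\cG$ actually yields $x(t)\in\dom(\partial f)$ for every $t>t_0$. Item~\ref{existence-4} follows from the classical estimate that $\|\xi(t)\|$ is uniformly bounded on every compact subinterval of $]t_0,+\infty[$ for the minimal selection in $\partial f(x(t))$, hence $\dot x$ is essentially bounded there by the first equation of \eqref{eq:fos2}. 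Item~\ref{existence-1} is obtained by observing that the second equation of \eqref{eq:fos2} is an ordinary differential equation for $y$ whose right-hand side is continuous in $t$ once continuity of $x$ is known, so $y\in\cC^1$. Items~\ref{existence-6:a}--\ref{existence-6:c} come from defining $\xi(t)$ as the unique element of $\partial f(x(t))$ making the first equation of \eqref{eq:fos2} an equality a.e.; then $\xi$ equals, up to affine terms involving $x,y,e$, $-\frac{1}{\beta}\dot x$, hence $\xi\in L^2(t_0,T;\cH)$. Finally, item (v) and item~\ref{existence-6:d} are consequences of Brezis's chain rule \cite[Lemma 3.3]{Bre1} for the composition of $f\in\Gamma_0(\cH)$ with the absolutely continuous trajectory $x$ taking values in $\dom(\partial f)$ with $\xi,\dot x\in L^2_{\mathrm{loc}}$.

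The main obstacle I anticipate is not the existence itself but the verification that the fixed-point construction yields a genuine strong solution in the sense of Definition~\ref{defsolforte}, and in particular the regularization properties~\ref{existence-3} and~\ref{existence-4}. These require the deeper part of Brezis's theory, namely that for $g\in L^2$ the unique solution of $\dot Z+\partial\cG(Z)\ni g$ lands instantaneously in $\dom(\partial\cG)$ and satisfies the inclusion pointwise a.e. with $\partial\cG$-selection in $L^2_{\mathrm{loc}}$; all remaining steps are essentially bookkeeping.
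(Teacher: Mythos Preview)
Your proposal is correct and follows essentially the same approach as the paper: reformulate \eqref{eq:fos2} as the evolution inclusion \eqref{eq:fos3} governed by $\partial\cG$ plus a time-dependent Lipschitz perturbation, and then appeal to Brezis's theory. The only difference is cosmetic: the paper invokes \cite[Proposition~3.12]{Bre1} directly (which already packages the Lipschitz-perturbation existence result), whereas you unpack its proof via the Banach fixed-point argument built on \cite[Theorem~3.17]{Bre1}; both routes land on \cite[Lemma~3.3]{Bre1} for the chain rule in \ref{existence-6}\ref{existence-6:d}, and the remaining items are verified exactly as you outline (the paper defers to \cite[Theorem~4.4]{APR1} for these details).
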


\begin{proof} 
It is sufficient to prove that $(x,y)$ is a strong solution of \eqref{eq:fos2} on $[t_0,T]$ and that the properties hold on $[t_0,T]$ for all $T>t_0$. So let us fix $T>t_0$. As we have already noticed, \eqref{eq:fos2} can be  written in the form \eqref{eq:fos3} which is a Lipschitz perturbation of the differential inclusion governed by the  subdifferential of a proper lsc convex function. A direct  application of \cite[Proposition~3.12]{Bre1} gives the existence and uniqueness of a strong  global solution $Z=(x,y):[t_0,T]\to\cH \times \cH$ to \eqref{eq:fos3}, or equivalently to \eqref{eq:fos2}, with initial condition $Z(t_0)=(x(t_0),y(t_0))=(x_0,y_0)$.  Verification of items \ref{existence-3} to \ref{existence-6} follows the same lines as the proof of \cite[Theorem~4.4]{APR1}. Of particular importance is the generalized derivation chain rule given in \ref{existence-6}\ref{existence-6:d}, which follows from \cite[Lemma~3.3]{Bre1} after checking that the corresponding assumptions are met thanks to \ref{existence-2}, \ref{existence-6}\ref{existence-6:a} and \ref{existence-6}\ref{existence-6:c}.\qed
\end{proof}
{
Under sufficient differentiability properties of the data,
we recover a classical solution, \ie \; $x(\cdot)$ is a $\cC^2([t_0, +\infty[)$ function, all the derivatives involved in   the equation \eqref{eq:origode_a} are taken in the sense of  classical differential calculus, and the equation \eqref{eq:origode_a} is satisfied for all $t \geq t_0$. This is made precise in the following statement.
\begin{corollary}\label{corr-existence-origode_a}
Assume that $f$ is a convex $\cC^2(\cH)$ function and $e$ belongs to $\cC^1([t_0, +\infty[)$. Then, for any $t_0 > 0$, and any Cauchy data $(x_0, \dot{x}_0)$, the system \eqref{eq:origode_a} with $\alpha, \beta \geq 0$ admits a unique classical global solution $x: [t_0,+\infty[ \to \cH$ satisfying $(x(t_0),\dot{x}(t_0)) = (x_0, \dot{x}_0)$.
\end{corollary}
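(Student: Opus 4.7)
The plan is to upgrade the strong solution supplied by Theorem~\ref{Thm-existence} to a classical one, using the extra smoothness of $f$ and $e$, and then to transport the result back to the second-order equation \eqref{eq:origode_a} via the equivalence established in Theorem~\ref{Thm-first-order-system}. The case $\beta=0$ reduces to the classical second-order ODE $\ddot x+(\alpha/t)\dot x+\nabla f(x)+e=0$, whose locally Lipschitz right-hand side (in the state $(x,\dot x)$, uniformly on compact $t$-intervals of $[t_0,+\infty[$) permits a direct Cauchy--Lipschitz argument; I focus below on the more delicate case $\beta>0$.

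First, given the Cauchy data $(x_0,\dot x_0)\in\cH\times\cH$, I would set
\[
y_0 \eqdef -\beta\pa{\dot x_0 + \beta\nabla f(x_0)} + \pa{1-\beta\alpha/t_0}x_0 - \beta^2 e(t_0),
\]
which is well defined since $f\in\cC^2(\cH)$ and $e$ is continuous at $t_0$. As any convex $\cC^2$ function on $\cH$ belongs to $\Gamma_0(\cH)$ and any $\cC^1$ function on $[t_0,+\infty[$ is in $L^2(t_0,T;\cH)$ for every $T>t_0$, Theorem~\ref{Thm-existence} applied with initial datum $(x_0,y_0)$ then delivers a unique global strong solution $(x,y):[t_0,+\infty[\to\cH\times\cH$ of \eqref{eq:fos2}. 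Since $\partial f=\bra{\nabla f}$ on $\cH$, the inclusion in \eqref{eq:fos2} becomes a genuine first-order ODE.

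Next I would bootstrap regularity. The first-order system reads $\dot Z(t)=F(t,Z(t))$ with $Z=(x,y)$, where $F$ is continuous in $t$ (since $e$ is continuous and the coefficients $1/\beta-\alpha/t$ and $1/\beta-\alpha/t+\alpha\beta/t^2$ are smooth on $[t_0,+\infty[$ with $t_0>0$) and $\cC^1$ in $Z$, hence locally Lipschitz in $Z$ uniformly on compact $t$-intervals (using $\nabla f\in\cC^1$). The classical Cauchy--Lipschitz theorem then produces a unique local $\cC^1$ solution through $(t_0,x_0,y_0)$, which by uniqueness must coincide with the strong solution above, so $(x,y)\in\cC^1([t_0,+\infty[;\cH\times\cH)$. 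Differentiating the first equation of \eqref{eq:fos2} once more and invoking $e\in\cC^1$, $\nabla f\in\cC^1$, and $x\in\cC^1$ gives $\dot x\in\cC^1$, hence $x\in\cC^2([t_0,+\infty[;\cH)$.

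With $x\in\cC^2$ in hand, the chain of substitutions \eqref{eq:fos01}--\eqref{eq:fos02} carried out in the proof of Theorem~\ref{Thm-first-order-system} becomes entirely rigorous in the classical sense, so $x$ satisfies \eqref{eq:origode_a} pointwise for every $t\geq t_0$; the specific choice of $y_0$ ensures $\dot x(t_0)=\dot x_0$. For uniqueness, I would use the converse direction of Theorem~\ref{Thm-first-order-system}: any classical $\tilde x$ solving \eqref{eq:origode_a} with the same Cauchy data defines, through the first equation of \eqref{eq:ISEHDequiv1stode}, a function $\tilde y$ such that $(\tilde x,\tilde y)$ is a strong solution of \eqref{eq:fos2} starting from $(x_0,y_0)$, and the uniqueness clause of Theorem~\ref{Thm-existence} then forces $\tilde x=x$. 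The only mild obstacle is bookkeeping around the singular factor $\alpha/t$, which is however harmless since $t_0>0$ keeps all coefficients smooth on $[t_0,+\infty[$.
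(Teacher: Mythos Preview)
Your proposal is correct and follows essentially the same strategy as the paper: invoke Theorem~\ref{Thm-existence} for the first-order system with the specific initial datum $y_0=-\beta(\dot x_0+\beta\nabla f(x_0))+(1-\beta\alpha/t_0)x_0-\beta^2 e(t_0)$, bootstrap the regularity of $x$ to $\cC^2$ using the first equation of \eqref{eq:ISEHDequiv1stode} and the assumed smoothness of $f$ and $e$, and then transfer existence and uniqueness back to \eqref{eq:origode_a} via the equivalence of Theorem~\ref{Thm-first-order-system}. The paper's own proof is terser---it observes directly that the first equation of \eqref{eq:fos2} forces $\dot x\in\cC^1$ once one knows $x$ is continuous and $y\in\cC^1$ (Theorem~\ref{Thm-existence}\ref{existence-1})---whereas you insert an intermediate Cauchy--Lipschitz step to first upgrade $(x,y)$ to $\cC^1$; both routes arrive at the same place. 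You are also more careful than the paper in singling out the degenerate case $\beta=0$, which falls outside the scope of Theorems~\ref{Thm-first-order-system} and~\ref{Thm-existence}; there your Cauchy--Lipschitz remark gives local well-posedness, though strictly speaking a global non-blow-up argument (e.g.\ via the energy $W$ of Theorem~\ref{lem:smoothexplicitestimates}) would be needed to fully close that case.
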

\begin{proof}
Under the above regularity assumptions, the first equation of the first order system \eqref{eq:fos2}
$$
\dot x(t)+\beta(\nabla  f(x(t)) + e(t)) -\pa{\frac{1}{\beta}-\frac{\alpha}{t}}x(t)+\frac{1}{\beta}y(t) = 0
$$
implies that $\dot x$ is a $\cC^1([t_0, +\infty[)$ function, and hence $x\in\cC^2([t_0, +\infty[)$.
Then, combining Theorem~\ref{Thm-first-order-system} with Theorem~\ref{Thm-existence} with $y(t_0) = -\beta(\dot x_0+\beta\nabla f(x_0))+(1-\beta\alpha/t_0)x_0 -\beta^2 e(t_0)$, we obtain the existence and uniqueness of a classical solution  to the Cauchy problem associated with \eqref{eq:origode_a}.
\end{proof}
}

\subsection{Implicit Hessian Damping}

\subsubsection{Formulation as a first-order system}
Let us now turn to \eqref{eq:odetwo_a}. We use the shorthand notation $\alpha(t)=\alpha/t$. Here and in the rest of the paper, we assume that $\beta(\cdot)$ is  $\cC^1([t_0, +\infty[, \R^+)$ and 
$\inf\limits_{t\in[t_0,+\infty[} \beta(t) > 0$.\\
Let us introduce the new function
\begin{equation}\label{eq:odetwo-2}
y(t) \eqdef x(t)+\beta(t)\dot{x}(t),
\end{equation}
whose time derivation gives 
\begin{equation}\label{eq:odetwo-3}
\dot{y}(t) = \dot{x}(t)+\beta(t)\ddot{x}(t)+ \dot{\beta}(t) \dot{x}(t).
\end{equation}
From \eqref{eq:odetwo_a} we know that
\begin{equation}\label{eq:odetwo-4}
\ddot{x}(t)= -\alpha (t) \dot{x}(t)-\nabla f (y(t))-e(t).
\end{equation}
By combining \eqref{eq:odetwo-3} and \eqref{eq:odetwo-4} we obtain
\begin{align}
\dot{y}(t) &= \dot{x}(t)+\beta(t)\pa{-\alpha(t)\dot{x}(t)-\nabla f(y(t))-e(t)} + \dot{\beta}(t) \dot{x}(t) \nonumber \\
&= \pa{1 - \alpha(t)\beta(t) + \dot{\beta}(t)}\dot{x}(t)-\beta(t)\pa{\nabla f(y(t) + e(t)}.\label{eq:odetwo-5}
\end{align}
From \eqref{eq:odetwo-2} and the fact that $\inf\limits_{t\in[t_0,+\infty[} \beta(t) > 0$ we get  
$
\dot{x}(t) = \frac{1}{\beta (t)}(y(t) -x(t)).
$
Replacing $\dot{x}(t)$ in \eqref{eq:odetwo-5} with this expression gives
\begin{eqnarray*}
\dot{y}(t) 
&=& \pa{1 - \alpha(t)\beta(t) + \dot{\beta}(t)}\frac{1}{\beta (t)}(y(t) -x(t))-\beta(t)\pa{\nabla f (y(t)) +e(t)} \\
&=& -\frac{1}{\beta (t)}\pa{1 - \alpha(t)\beta(t) + \dot{\beta}(t)}x(t) + \frac{1}{\beta (t)}\pa{1 - \alpha(t)\beta(t) + \dot{\beta}(t)}y(t) - \beta(t)\pa{\nabla f (y(t)) + e(t)} .
\end{eqnarray*}
The reverse implication is obtained in a similar way. Let us summarize the results.
\begin{theorem}\label{Thm-first-order-system_implicit}
Let $f \in \cC^1(\cH)$. Suppose that $\alpha \geq 0$ and $\inf\limits_{t\in[t_0,+\infty[} \beta(t) > 0$. The following statements are equivalent: 
\begin{enumerate}[label=\arabic*.]
\item $x:[t_0,+\infty [ \to \cH$ is a solution trajectory of \eqref{eq:odetwo_a} with initial conditions $x(t_0)=x_0$, $\dot x(t_0)=\dot x_0$. 
\item $(x,y):[t_0,+\infty [ \to \cH \times \cH$ is a solution trajectory of the first-order system 
\begin{align}\label{eq:ISIHDequiv1stode}
\begin{cases}
\dot{x}(t) + \frac{1}{\beta(t)}x(t) - \frac{1}{\beta(t)}y(t) = 0.   \vspace{1mm} \\
\dot{y}(t) + \beta(t)\pa{\nabla f (y(t)) + e(t)} + \frac{1}{\beta (t)}\pa{1 - \alpha(t)\beta(t) + \dot{\beta}(t)}(x(t) - y(t))  = 0  
\end{cases}
\end{align}
with initial conditions $x(t_0)=x_0$, 
$y(t_0)=x_0 + \beta(t_0)\dot{x}_0$.
\end{enumerate}
\end{theorem}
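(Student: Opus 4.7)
The plan is to exploit the change of variables $y(t) \eqdef x(t) + \beta(t)\dot{x}(t)$, which is invertible for $\dot{x}(t)$ on $[t_0,+\infty[$ thanks to the standing hypothesis $\inf_{t} \beta(t) > 0$. Since $\beta \in \cC^1$ and bounded away from zero, this map is a $\cC^1$-diffeomorphism between the pairs $(x,\dot{x})$ and $(x,y)$, so both systems are specified by the same amount of Cauchy data, which makes an equivalence statement plausible a priori.

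For the implication $1 \Rightarrow 2$, I would follow exactly the computation carried out in equations \eqref{eq:odetwo-2}--\eqref{eq:odetwo-5} just above the theorem statement. Starting from a classical solution $x$ of \eqref{eq:odetwo_a}, I define $y$ by \eqref{eq:odetwo-2}. The initial condition $y(t_0) = x_0 + \beta(t_0)\dot{x}_0$ is immediate. The relation $\dot{x}(t) = \frac{1}{\beta(t)}\pa{y(t) - x(t)}$ obtained by rearranging \eqref{eq:odetwo-2} is precisely the first line of \eqref{eq:ISIHDequiv1stode}. Differentiating $y$, substituting $\ddot{x}$ from \eqref{eq:odetwo_a}, and then eliminating $\dot{x}$ via the same relation yields \eqref{eq:odetwo-5}, which is exactly the second line of \eqref{eq:ISIHDequiv1stode} after regrouping terms.

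For the reverse implication $2 \Rightarrow 1$, I would proceed symmetrically. Given a solution $(x,y)$ of \eqref{eq:ISIHDequiv1stode}, the first equation asserts $y(t) = x(t) + \beta(t)\dot{x}(t)$. Since $y$ is $\cC^1$ and $\beta$ is $\cC^1$ with $\beta(t) > 0$, this forces $\dot{x}$ to be $\cC^1$, so $x \in \cC^2$. I then differentiate $y = x + \beta \dot{x}$ to get $\dot{y} = \dot{x} + \beta \ddot{x} + \dot{\beta}\dot{x}$, and equate this with the right-hand side expression for $\dot{y}$ dictated by the second equation of \eqref{eq:ISIHDequiv1stode}. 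Using once more $y - x = \beta\dot{x}$, the terms involving $x$ and $y$ without $\nabla f$ collapse back to the factor $-\alpha(t)\dot{x}(t)$ after cancellation of $\dot{\beta}\dot{x}$, and I recover \eqref{eq:odetwo_a}. Matching the initial data is immediate: $x(t_0)=x_0$ by hypothesis, and $\dot{x}(t_0) = (y(t_0)-x_0)/\beta(t_0) = \dot{x}_0$ by the prescribed value of $y(t_0)$.

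No step is a genuine obstacle; the whole argument is algebraic bookkeeping, and the only point requiring attention is to verify that the coefficient $\frac{1}{\beta(t)}\pa{1 - \alpha(t)\beta(t) + \dot{\beta}(t)}$ in front of $(x-y)$ in the second line of \eqref{eq:ISIHDequiv1stode} and the $\dot{\beta}\dot{x}$ term picked up when differentiating $y$ combine correctly so that $\dot{\beta}(t)\dot{x}(t)$ cancels and $-\alpha(t)\dot{x}(t) = -\frac{\alpha}{t}\dot{x}(t)$ survives in the recovered ODE. The regularity hypotheses on $\beta$ and the lower bound $\inf_t \beta(t)>0$ guarantee that every division and differentiation performed along the way is legitimate.
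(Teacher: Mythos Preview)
Your proposal is correct and follows essentially the same approach as the paper: the paper's argument is precisely the computation in \eqref{eq:odetwo-2}--\eqref{eq:odetwo-5} preceding the theorem, with the remark that the reverse implication is obtained in a similar way, and your write-up spells out both directions in the same spirit.
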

%

\subsubsection{Existence and uniqueness of a solution}
Existence and uniqueness of a global strong solution for the {Cauchy problem associated with the} unperturbed problem \eqref{eq:odetwo} was shown in \cite{alecsa2019extension} when $\nabla f$ is Lipschitz continuous using the Cauchy-Lipschitz theorem. This result can be easily extended to \eqref{eq:odetwo_a}. Rather, we take a different path here and proceed as in Section~\ref{section-gen-existence}, so that we can extend the above formulation  to the case where $f \in \Gamma_0(\cH)$, by replacing the gradient $\nabla f$ with the subdifferential $\partial f$.
\begin{definition}
Let $\alpha(t) \geq 0$, $\beta (t) > 0$, $f \in \Gamma_0(\cH)$. {Given
$(x_0, y_0) \in \cH  \times \dom(f) $, the Cauchy problem associated with the} perturbed inertial system with implicit generalized Hessian driven damping is defined by
\begin{align}
 \begin{cases}
\dot{x}(t) + \frac{1}{\beta(t)}x(t) - \frac{1}{\beta(t)}y(t) = 0 	 \\
\dot{y}(t) + \beta(t)\pa{\partial f (y(t)) + e(t)} + \frac{1}{\beta (t)}\pa{1 - \alpha(t)\beta(t) + \dot{\beta}(t)}(x(t) - y(t))  \ni 0  \label{eq:fos2_implicit}\\
x(t_0) = x_0, y(t_0)=y_0 .
\end{cases}
\end{align}
\end{definition}
We reformulate \eqref{eq:fos2_implicit} in the product space $\cH \times \cH$ by setting $Z(t) = (x(t), y(t)) \in  \cH \times \cH$, and thus \eqref{eq:fos2_implicit} can be equivalently written as
\begin{equation}\label{eq:fos3_implicit}
\dot{Z}(t) + \beta(t) \partial \cG (Z(t)) + \cD(t, Z(t)) \ni 0,
\end{equation}
where $\cG \in \Gamma_0(\cH \times \cH)$ is the function defined as
$ \cG (Z) =  f (y)$, and the time dependent operator $\cD:\ [t_0,+\infty[ \times \cH \times \cH \to \cH \times \cH$ is given by
\begin{equation}\label{eq:fos5_implicit}
\cD(t,Z)=
  \pa{\frac{1}{\beta (t)}(x - y) ,
    \beta(t)e(t) + \frac{1}{\beta (t)}\pa{1 - \alpha(t)\beta(t) + \dot{\beta}(t)}(x - y)} .
\end{equation}

\myparagraph{Constant $\beta$} When $\beta$ is independent of $t$, the differential inclusion \eqref{eq:fos3_implicit} is governed by the sum of the convex subdifferential operator $\beta \partial \cG$ and the time-dependent affine continuous operator $\cD (t,\cdot)$. The existence and uniqueness of a global strong solution for the Cauchy problem associated to \eqref{eq:fos2_implicit} follows exactly from the same arguments as those for Theorem~\ref{Thm-existence}. {In turn, if $f\in \cC^1(\cH)$, $e\in \cC([t_0, +\infty[)$, and $\beta \in \cC^1([t_0, +\infty[)$, then \eqref{eq:ISIHDequiv1stode} admits a unique $\cC^1([t_0, +\infty[)$ global solution $(\dot{x},\dot{y})$. It then follows from the first equation in \eqref{eq:ISIHDequiv1stode} that $\dot x$ is a $\cC^1([t_0, +\infty[)$ function, and hence $x\in\cC^2([t_0, +\infty[)$. Existence and uniqueness of a classical global solution to the Cauchy problem associated to \eqref{eq:odetwo_a} is then obtained thanks to the equivalence in Theorem~\ref{Thm-first-order-system_implicit}.}

\smallskip

\myparagraph{Time-dependent $\beta$} When $\beta$ depends on time, one cannot invoke directly the results of \cite{Bre1}. Instead, one can appeal to the theory of evolution equations governed by general time-dependent subdifferentials as proposed in \cite{AD} for example. {In fact, for a system in the simpler form \eqref{eq:fos3_implicit}, one can argue more easily, by  making the change of time variable $t = \tau(s)$ with  $\beta(\tau(s))\dot{\tau}(s)=1$.  Lemma~\ref{lem:timerescale} then shows that \eqref{eq:fos3_implicit} is equivalent to 
\begin{equation}\label{eq:fos3_implicit_rescale}
\dot{W}(s) +  \partial \cG (W(s)) + \cF(s, W(s)) \ni 0,
\end{equation}
where $W(s)= Z(\tau(s))$, and $\cF(s,W(s))= \frac{1}{\beta(\tau(s))}\cD(\tau(s),W(s))$ is affine continuous in its second argument.  Provided that $\beta \not\in L^1(t_0,+\infty;\R)$, this defines a proper change of variable in time}. With the formulation \eqref{eq:fos3_implicit_rescale}, we are brought back to the appropriate form to argue as before and invoke the results of \cite{Bre1}. We leave the details to the reader for the sake of brevity.

\section{Smooth Convex Case}\label{s:convex}

\subsection{Explicit Hessian Damping}\label{s:secondorderconv}

Consider first the explicit Hessian system~\eqref{eq:origode_a}, where we assume that $f \in \cC^2(\cH)$, and recall the specific choices of $\gamma(t)=\frac{\alpha}{t}$, $\alpha > 0$, $\beta(t)\equiv\beta$ and $b(t)\equiv 1$. 
We will develop a Lyapunov analysis to study the dynamics of~\eqref{eq:origode_a}. Some of our arguments are inspired by the works of~\cite{attouch2018fast} and \cite{attouch2019first}. Throughout this section we use the shorthand notation 
\begin{equation}\label{def_g}
g(t):= e(t) + \beta \dot{e}(t)
\end{equation}
{for the overall contribution of the errors terms. We will first establish the minimization property which is valid by simply assuming the integrability of the error term and its derivative. Then, by reinforcing these hypotheses, we will obtain  rapid convergence results, and the convergence of trajectories.}

\subsubsection{Minimizing properties}
{Define  $u: [t_0,+\infty[ \to \cH$ by
\[
u(t) \eqdef x(t) + \beta \int_{t_0}^t \nabla f(x(s)) ds,
\]
which will be instrumental in the proof of the following theorem. Note that, in the following statement, it is simply assumed that $f$ is bounded from below, the set $S \eqdef \argmin_{\cH} f$ may be empty.
\begin{theorem}\label{lem:smoothexplicitestimates}
Let $f: \cH \to \R$ be a $\cC^2(\cH)$ function which is  bounded from below. Assume that $e: [t_0,+\infty[ \to \cH$ is a $\cC^1(\cH)$ function which satisfies the integrability properties  $\DS{\int_{t_0}^{+\infty} \norm{e(t)}dt<+\infty}$ and $\DS{\int_{t_0}^{+\infty} \norm{\dot{e}(t)}dt<+\infty}$. Suppose that $\alpha,\beta>0$. Then, for any solution trajectory $x: [t_0,+\infty[ \to \cH$ of \eqref{eq:origode_a}, we have
\begin{enumerate}[label={\rm (\roman*)}]
\item \label{lem:smoothexplicitestimates1}
$\sup\limits_{t \geq t_0} \norm{\dot{u}(t)}<+\infty$; 
\item \label{lem:smoothexplicitestimates2}
$\DS{\int_{t_0}^{+\infty} \frac{1}{t} \|\dot{x}(t)\|^2 dt < +\infty}$, $\DS{\int_{t_0}^{+\infty} \norm{\nabla f(x(t))}^2 dt < +\infty}$, $\DS{\int_{t_0}^{+\infty} \frac{1}{t} \|\dot{u}(t)\|^2 dt < +\infty}$;
\item \label{lem:smoothexplicitestimates3}
 $\lim\limits_{t\to+ \infty} \norm{\dot{u}(t)}=0$; \,  $\lim\limits_{t\to+ \infty} \norm{\dot{x}(t)}=0$; $\lim\limits_{t\to+ \infty} \norm{\nabla f(x(t))}=0$;
\item \label{lem:smoothexplicitestimates4}
$\lim\limits_{t\to +\infty} f(x(t))=\inf_{\cH} f$.
\end{enumerate} 
\end{theorem}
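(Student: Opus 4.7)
The plan rests on the Lyapunov energy
\[
V(t) := \pa{1 + \frac{\alpha\beta}{t}}\pa{f(x(t))-\bar f} + \frac{1}{2}\norm{\dot u(t)}^2,
\]
which is nonnegative since $f \geq \bar f$ and $\dot u(t) = \dot x(t) + \beta \nabla f(x(t))$ by construction. I set $g(t) := e(t) + \beta \dot e(t)$, so that the integrability hypotheses yield $\int_{t_0}^{+\infty}\norm{g(s)}ds<+\infty$. Corollary~\ref{corr-existence-origode_a} gives a classical solution and therefore allows free differentiation of $V$. Rewriting \eqref{eq:origode_a} as $\ddot u + \frac{\alpha}{t}\dot x + \nabla f(x) + g = 0$, and using $\dotp{\dot u}{\dot x} = \norm{\dot x}^2 + \beta\dotp{\nabla f(x)}{\dot x}$ together with $\dotp{\dot u}{\nabla f(x)} = \dotp{\dot x}{\nabla f(x)} + \beta\norm{\nabla f(x)}^2$, the cross terms in $\dotp{\nabla f(x)}{\dot x}$ cancel and one obtains the clean differential identity
\[
\dot V(t) = -\beta\norm{\nabla f(x(t))}^2 - \frac{\alpha}{t}\norm{\dot x(t)}^2 - \frac{\alpha\beta}{t^2}\pa{f(x(t))-\bar f} - \dotp{\dot u(t)}{g(t)},
\]
whose first three terms are nonpositive.

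Properties \ref{lem:smoothexplicitestimates1} and \ref{lem:smoothexplicitestimates2} are then read off this identity. From $\norm{\dot u}^2 \leq 2V$, the bound $\dot V \leq \norm{\dot u}\norm{g} \leq \sqrt{2V}\,\norm{g}$ rewrites as $\tfrac{d}{dt}\sqrt V \leq \norm{g}/\sqrt 2$; Gronwall's lemma gives $\sqrt{V(t)} \leq \sqrt{V(t_0)} + \frac{1}{\sqrt 2}\int_{t_0}^{+\infty}\norm{g(s)}ds<+\infty$, yielding \ref{lem:smoothexplicitestimates1}. Integrating the full identity on $[t_0,T]$, the right-hand side is uniformly bounded in $T$ by $V(t_0)+\sup\norm{\dot u}\int\norm{g}<+\infty$, which forces $\int_{t_0}^{+\infty}\norm{\nabla f(x)}^2 dt<+\infty$ and $\int_{t_0}^{+\infty}\tfrac{1}{t}\norm{\dot x}^2 dt<+\infty$. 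Combined with $\norm{\dot u}^2 \leq 2\norm{\dot x}^2 + 2\beta^2\norm{\nabla f(x)}^2$ and the trivial bound $1/t \leq 1/t_0$ on the gradient term, this establishes the third bound of \ref{lem:smoothexplicitestimates2}.

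For \ref{lem:smoothexplicitestimates3}--\ref{lem:smoothexplicitestimates4}, I would first show that $V$ has a finite limit $V_\infty\geq 0$: the inequality $\dot V+\dotp{\dot u}{g}\leq 0$ together with $\dotp{\dot u}{g}\in L^1(t_0,+\infty;\R)$ (from \ref{lem:smoothexplicitestimates1} and $\int\norm{g}<+\infty$) makes $t\mapsto V(t)-\int_{t_0}^t\dotp{\dot u(s)}{g(s)}ds$ nonincreasing and bounded below, hence convergent. The vanishing statements $\norm{\nabla f(x(t))}\to 0$ and $\norm{\dot u(t)}\to 0$ are then obtained by Barbalat-type arguments exploiting the $L^2$-integrability from \ref{lem:smoothexplicitestimates2}, the uniform continuity being engineered from the ODE expression $\ddot u = -\tfrac{\alpha}{t}\dot x - \nabla f(x) - g$ together with the bounds already at hand. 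Since $\dot x = \dot u - \beta\nabla f(x)$, we also get $\norm{\dot x(t)}\to 0$. Passing to the limit in the definition of $V$ yields $f(x(t)) \to \bar f + V_\infty$, and the identification $V_\infty=0$ comes from convexity: for any $\eps>0$ and an $\eps$-minimizer $y_\eps\in\cH$ of $f$, the subgradient inequality gives $f(x(t))-f(y_\eps)\leq \norm{\nabla f(x(t))}\cdot\norm{x(t)-y_\eps}$, and evaluating along the subsequence where $\norm{\nabla f(x(t))}$ vanishes forces $\limsup f(x(t))\leq \bar f+\eps$.

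The hardest part is \ref{lem:smoothexplicitestimates3}: since $\nabla f$ is not assumed Lipschitz, the uniform continuity required by Barbalat has to be distilled from the ODE itself by bounding $\ddot u$ in terms of $\dot x$, $\nabla f(x)$, and $g$ with only the weak controls provided by \ref{lem:smoothexplicitestimates1}--\ref{lem:smoothexplicitestimates2}. A secondary delicate point is the identification $V_\infty=0$ in \ref{lem:smoothexplicitestimates4}: in the absence of a minimizer, $x(t)$ need not be bounded, so the $\eps$-minimizers must be selected adaptively along the subsequence on which $\nabla f(x(t))$ vanishes, in order to keep the product $\norm{\nabla f(x(t))}\cdot\norm{x(t)-y_\eps}$ small.
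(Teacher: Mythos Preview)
Your treatment of \ref{lem:smoothexplicitestimates1}--\ref{lem:smoothexplicitestimates2} is correct and in fact slightly cleaner than the paper's: your Lyapunov function $V(t) = (1+\tfrac{\alpha\beta}{t})(f(x(t))-\bar f) + \tfrac{1}{2}\norm{\dot u(t)}^2$ produces the exact identity $\dot V = -\beta\norm{\nabla f(x)}^2 - \tfrac{\alpha}{t}\norm{\dot x}^2 - \tfrac{\alpha\beta}{t^2}(f(x)-\bar f) - \dotp{\dot u}{g}$, whereas the paper uses $W_T(t) = \tfrac{1}{2}\norm{\dot u}^2 + (f(x)-\bar f) - \int_t^T\dotp{\dot u}{g}$, obtains a cross term $-\tfrac{\alpha\beta}{t}\dotp{\dot x}{\nabla f(x)}$, and absorbs it via Young's inequality at the cost of restricting to $t \geq 2\alpha\beta$. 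The extra factor $(1+\tfrac{\alpha\beta}{t})$ you insert in front of the potential term is exactly what makes the cross terms cancel. Both routes yield \ref{lem:smoothexplicitestimates1}--\ref{lem:smoothexplicitestimates2} in the same way (Gronwall, then integration).

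For \ref{lem:smoothexplicitestimates3}--\ref{lem:smoothexplicitestimates4}, however, your sketch has a genuine gap. The Barbalat route for $\norm{\nabla f(x(t))}\to 0$ needs uniform continuity of $t\mapsto \norm{\nabla f(x(t))}^2$, whose derivative involves $\nabla^2 f(x(t))\dot x(t)$; you have no bound on $\nabla^2 f$, and $\dot x$ is not known to be bounded (only $\dot u = \dot x + \beta\nabla f(x)$ is). Your alternative expression $\ddot u = (\tfrac{1}{\beta}-\tfrac{\alpha}{t})\dot x - \tfrac{1}{\beta}\dot u - g$ still contains $\dot x$ uncontrolled, so $\ddot u$ is not bounded either, and Barbalat does not apply to $\norm{\dot u}$ from $\int\tfrac{1}{t}\norm{\dot u}^2<\infty$ alone (a constant $\norm{\dot u}$ satisfies that estimate). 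The identification $V_\infty=0$ via $f(x(t))-f(y_\eps)\leq \norm{\nabla f(x(t))}\norm{x(t)-y_\eps}$ also breaks: the theorem assumes only $\inf_\cH f>-\infty$, not $S\neq\emptyset$, so $x(t)$ may be unbounded, and there is no mechanism forcing the product $\norm{\nabla f(x(t_n))}\cdot\norm{x(t_n)-y_\eps}$ to vanish along your subsequence.

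The paper bypasses both obstacles by a contradiction argument anchored at an arbitrary $z\in\cH$: setting $h(t)=\tfrac{1}{2}\norm{u(t)-z}^2$ and computing $\ddot h + \tfrac{\alpha}{t}\dot h$, one obtains a differential inequality in which $W(t)+\bar f - f(z)$ appears with a positive coefficient. If $W_\infty>0$, choosing $z$ with $f(z)<\bar f + \tfrac{1}{2}W_\infty$ forces, after dividing by $t$ and integrating, a term $\tfrac{1}{4}W_\infty\log(\tau/t_0)$ on the left that outgrows all right-hand terms (the latter being controlled via \ref{lem:smoothexplicitestimates1}--\ref{lem:smoothexplicitestimates2}, integration by parts, and Lemma~\ref{basic-int}). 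This yields $W_\infty\leq 0$, hence $W_\infty=0$, from which $f(x(t))\to\bar f$ and $\norm{\dot u(t)}\to 0$ follow simultaneously; $\norm{\dot x(t)}\to 0$ is then obtained by running the same contradiction scheme with the classical energy $W_0(t)=\tfrac{1}{2}\norm{\dot x}^2 + (f(x)-\bar f) - \int_t^\infty\dotp{\dot x}{g}$, and $\norm{\nabla f(x(t))}\to 0$ comes from $\nabla f(x)=\tfrac{1}{\beta}(\dot u - \dot x)$. No uniform continuity and no boundedness of the trajectory are needed.
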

}
\begin{proof}
Recall $\bar{f} \eqdef \inf_{\cH} f$. Since our analysis is asymptotic, {there is no restriction in assuming} that $t \geq t_1 \eqdef \max(t_0,2\alpha\beta)$. We will then prove the statements in terms of $t_1$ and passing to $t_0$ is immediate thanks to the properties of the solution $x(t)$ in Theorem~\ref{Thm-existence}. 
\paragraph{Claim~\ref{lem:smoothexplicitestimates1}}
For $T \geq t \geq t_1$, define the function
\[
W_T(t) \eqdef \frac{1}{2}\norm{\dot{u}(t)}^2 + \pa{f(x(t))-\bar{f}} - \int_t^T \dotp{\dot{u}(\tau)}{g(\tau)} d\tau .
\]
Observe that $W_T$ is well-defined under our assumptions. Thus, taking the derivative in time and using \eqref{eq:origode_a}, we get,
\begin{eqnarray}
\dot{W}_T(t) 
&=& \dotp{\dot{u}(t)}{\ddot{u}(t)+g(t)} + \dotp{\dot{x}(t)}{\nabla f(x(t))} \nonumber\\
&=& \dotp{\dot{x}(t)+\beta \nabla f(x(t))}{\ddot{x}(t)+\beta\nabla^2 f(x(t))\dot{x}(t)+g(t)} + \dotp{\dot{x}(t)}{\nabla f(x(t))} \nonumber\\
&=& \dotp{\dot{x}(t)+\beta \nabla f(x(t))}{-\frac{\alpha}{t}\dot{x}(t) -\nabla f(x(t))} + \dotp{\dot{x}(t)}{\nabla f(x(t))}  \nonumber\\
&=&-\frac{\alpha}{t} \norm{\dot{x}(t)}^2 - \beta \norm{\nabla f(x(t))}^2 - \frac{\alpha\beta}{t}\dotp{\dot{x}(t)}{\nabla f(x(t))} \nonumber\\
&\leq& -\frac{\alpha}{2t} \norm{\dot{x}(t)}^2 - \beta\pa{1-\frac{\alpha\beta}{2t}} \norm{\nabla f(x(t))}^2  \nonumber\\
&\leq& -\frac{\alpha}{2t} \norm{\dot{x}(t)}^2 - \frac{\beta}{2} \norm{\nabla f(x(t))}^2 , \label{eq:WTbound}
\end{eqnarray}
where we used Young inequality and the fact that $t \geq t_1 > \alpha\beta$. This implies that $W_T$ is non-increasing and in turn that $W_T(t)\le W_T(t_1)$ for $t \in [t_1,T]$, \ie
\begin{equation*}
\frac{1}{2}\norm{\dot{u}(t)}^2+\pa{f(x(t))-\bar{f}}-\int_t^T \dotp{\dot{u}(\tau)}{g(\tau)} d\tau 
\le \frac{1}{2} \|\dot{u}(t_1)\|^2+\pa{f(x(t_1))-\bar{f}}-\int_{t_1}^T \dotp{\dot{u}(\tau)}{g(\tau)} d\tau .
\end{equation*}
Therefore,
\[
\frac{1}{2}\norm{\dot{u}(t)}^2 \le \frac{1}{2}\|\dot{u}(t_1)\|^2+\pa{ f(x(t_1))-\bar{f}}+\int_{t_1}^t \|\dot{u}(\tau)\|\|g(\tau)\|d\tau .
\]
{Applying the Gronwall} Lemma~\ref{lem:BrezisA5}, we get
\[
\sup\limits_{t \geq t_1}  \norm{\dot{u}(t)}\le \pa{\|\dot{u}(t_1)\|^2+2(f(x(t_1))-\bar{f})}^{1/2}+\int_{t_1}^{+\infty} \|e(\tau)\| d\tau + \beta\int_{t_1}^{+\infty} \|\dot{e}(\tau)\| d\tau < +\infty ,
\]
hence proving the first claim. 

\paragraph{Claim~\ref{lem:smoothexplicitestimates2}}
Now define {for all $t\geq t_1$ }
\begin{align*}
W(t) \eqdef \frac{1}{2} \norm{\dot{u}(t)}^2+(f(x(t))-\bar{f}) - \int_t^{+\infty} \dotp{\dot{u}(\tau)}{g(\tau)} d\tau .
\end{align*}
This is again a well-defined function thanks to the first claim, 
{and the integrability of $g$}. Moreover, $W(\cdot )$ is bounded from below,
\begin{equation}\label{W_minorized}
\inf_{t \geq t_1} W(t) \ge -\brac{\sup\limits_{t \geq t_1} \norm{\dot{u}(t)}} \brac{\int_{t_1}^{+\infty} \|e(\tau)\|d\tau+\beta\int_{t_1}^{+\infty} \|\dot{e}(\tau)\|d\tau} > -\infty .
\end{equation}
Observe that $\dot{W}(t)=\dot{W}_T(t)$. This together with \eqref{eq:WTbound} yields
\begin{equation}\label{W_decrease}
\dot{W}(t) + \frac{\alpha}{2t} \norm{\dot{x}(t)}^2 + \frac{\beta}{2} \norm{\nabla f(x(t))}^2 \leq 0 .
\end{equation}
{Integrating and using that $W$ is bounded from below, we obtain the first two claims. From $\dot{u}(t)= \dot{x}(t) + \beta \nabla f(x(t))$, we deduce that  $\|\dot{u}(t)\|^2 \leq 2( \|\dot{x}(t)\|^2  + \beta^2 \| \nabla f(x(t))\|^2 )$. After integration,
we get the last claim
\begin{align*}
\int_{t_1}^{+\infty} \frac{1}{t} \|\dot{u}(t)\|^2 dt 
&\leq 2 \pa{\int_{t_1}^{+\infty} \frac{1}{t} \|\dot{x}(t)\|^2 dt + \int_{t_1}^{+\infty} \frac{\beta^2}{t} \|\nabla f(x(t))\|^2 dt} \\
&\leq 2\pa{\int_{t_1}^{+\infty} \frac{1}{t} \|\dot{x}(t)\|^2 dt + \frac{\beta^2}{t_1} \int_{t_1}^{+\infty} \|\nabla f(x(t))\|^2 dt} < +\infty .
\end{align*}}
\paragraph{Claim~{\ref{lem:smoothexplicitestimates3}} and~\ref{lem:smoothexplicitestimates4}} 
Define $h: t \in [t_1,+\infty[ \mapsto \frac{1}{2}\|u(t)-z\|^2$ for arbitrary $z\in\cH$. We then have
\begin{align*}
\ddot{h}(t)+\frac{\alpha}{t}\dot{h}(t) 
&= \norm{\dot{u}(t)}^2 + \dotp{u(t)-z}{\ddot{u}(t)+\frac{\alpha}{t}\dot{u}(t)} \\
&= \norm{\dot{u}(t)}^2 + \dotp{u(t)-z}{\ddot{x}(t)+\beta\nabla^2 f(x(t))\dot{x}(t) + \frac{\alpha}{t}\dot{x}(t)+\frac{\alpha\beta}{t}\nabla f(x(t))} \\
&= \norm{\dot{u}(t)}^2 - \dotp{u(t)-z}{g(t)+\pa{1-\frac{\alpha\beta}{t}}\nabla f(x(t))} \\
&= \norm{\dot{u}(t)}^2 - \pa{1-\frac{\alpha\beta}{t}}\dotp{x(t)-z}{\nabla f(x(t))} - \dotp{u(t)-z}{g(t)} \\
&-  \beta\pa{1-\frac{\alpha\beta}{t}}\dotp{\int_{t_1}^t \nabla f(x(s)) ds}{\nabla f(x(t))}   \\
&= \norm{\dot{u}(t)}^2 - \pa{1-\frac{\alpha\beta}{t}}\dotp{x(t)-z}{\nabla f(x(t))} - \dotp{u(t)-z}{g(t)} - \beta\pa{1-\frac{\alpha\beta}{t}} \dot{I}(t) ,
\end{align*}
where $\DS{I(t) \eqdef \frac{1}{2}\norm{\int_{t_1}^t \nabla f(x(s)) ds}^2}$. {From the convexity of $f$ and Cauchy-Schwarz inequality we get}
\begin{equation*}
\ddot{h}(t)+\frac{\alpha}{t}\dot{h}(t) + \pa{1-\frac{\alpha\beta}{t}}\pa{f(x(t))-f(z)} + \beta\pa{1-\frac{\alpha\beta}{t}}\dot{I}(t)
\leq \norm{\dot{u}(t)}^2  + \norm{u(t)-z}\norm{g(t)} .
\end{equation*}
Inserting $W(t)$ into this expression, we get,
\begin{eqnarray}
&&\ddot{h}(t)+\frac{\alpha}{t}\dot{h}(t) + \pa{1-\frac{\alpha\beta}{t}}\pa{W(t)+ \bar{f} -f(z)} + \beta\pa{1-\frac{\alpha\beta}{t}}\dot{I}(t) \nonumber \\
&& \leq \pa{\frac{3}{2}-\frac{\alpha\beta}{2t}}\norm{\dot{u}(t)}^2  
+ \norm{u(t)-z}\norm{g(t)} - \pa{1-\frac{\alpha\beta}{t}} \int_{t}^{+\infty} \dotp{\dot{u}(\tau)}{g(\tau)} d\tau .\label{basic-est-1}
\end{eqnarray}
According to \eqref{W_decrease} and \eqref{W_minorized}   $W(\cdot)$ is nonincreasing and bounded from below. Therefore, it converges to some $W_\infty \in \R$ as $t \to +\infty$. 
{Since $\dot{u}$ is bounded, and $g$ is integrable, we have that $\tau \to \dotp{\dot{u}(\tau)}{g(\tau)} $ is integrable on $[t_0, +\infty[$. Therefore
$$
\lim_{t\to +\infty} \int_t^{+\infty} \dotp{\dot{u}(\tau)}{g(\tau)} d\tau =0.
$$
By definition of $W(t)$, this implies that, as $t \to +\infty$
$$
\frac{1}{2} \norm{\dot{u}(t)}^2+(f(x(t))-\bar{f}) \to W_\infty.
$$
If $W_\infty =0$,  since the
two terms that enter the above expression (potential energy and kinetic energy) are nonnegative, we obtain that each of them tends to zero as $t \to +\infty$.
This gives the claims ~{\ref{lem:smoothexplicitestimates3}} and~\ref{lem:smoothexplicitestimates4}.
To prove that $W_\infty =0$, we  argue by contradiction, and show that assuming $W_\infty >0$ leads to a contradiction. Since $W(\cdot)$ is nonincreasing, we then have 
$W(t) \geq W_\infty >0$. Take $z\in \cH$ such that $f(z) < \bar{f} + \demi W_\infty$. Then
$$
W(t)+ \bar{f} -f(z) > W_\infty  - \demi W_\infty = \demi W_\infty.
$$
Returning to (\ref{basic-est-1}) we deduce that, for $t\geq t_1$
\begin{eqnarray}
&&\ddot{h}(t)+\frac{\alpha}{t}\dot{h}(t) + \demi\pa{1-\frac{\alpha\beta}{t}} W_\infty + \beta\pa{1-\frac{\alpha\beta}{t}}\dot{I}(t) \nonumber \\
&& \leq \pa{\frac{3}{2}-\frac{\alpha\beta}{2t}}\norm{\dot{u}(t)}^2  
+ \norm{u(t)-z}\norm{g(t)} - \pa{1-\frac{\alpha\beta}{t}} \int_{t}^{+\infty} \dotp{\dot{u}(\tau)}{g(\tau)} d\tau .\label{basic-est-2}
\end{eqnarray}
Since $t>2\alpha\beta$, we have $1-\frac{\alpha\beta}{t} >\demi$. Therefore, after rearranging the terms in \eqref{basic-est-2}, we obtain
$$
\frac{1}{4} W_{\infty}
\le \frac{3}{2}\norm{\dot{u}(t)}^2+\norm{u(t)-z}\norm{g(t)} 
+\pa{\sup\limits_{t\ge t_1} \norm{\dot{u}(t)}}\int_t^{+\infty} \|g(s)\|ds -\frac{1}{t^\alpha}\frac{d}{dt}(t^\alpha \dot{h}(t))
-\beta\pa{1-\frac{\alpha\beta}{t}}\dot{I}(t).
$$
}
Multiplying both sides by $\frac{1}{t}$, and integrating between $t_1$ and $\tau > t_1$,
\begin{eqnarray}
\frac{1}{4} W_{\infty} \log\pa{\frac{\tau}{t_1}} 
&\le& \frac 32\int_{t_1}^\tau \frac{1}{t}\norm{\dot{u}(t)}^2 dt +\int_{t_1}^\tau\frac{\|g(t)\|\|u(t)-z\|}{t}dt 
+ \pa{\sup\limits_{t\ge t_1} \norm{\dot{u}(t)}}\int_{t_1}^\tau \pa{\frac1t\int_t^{+\infty}\|g(s)\|ds} dt \nonumber \\ 
&&- \int_{t_1}^\tau \frac{1}{t^{\alpha+1}}\frac{d}{dt}(t^\alpha \dot{h}(t)) dt 
- \beta\int_{t_1}^\tau \pa{\frac{1}{t}-\frac{\alpha\beta}{t^2}}\dot{I}(t) dt . \label{eq:binf}
\end{eqnarray}
Throughout the rest of the proof, we will use the inequality 
\[
\norm{u(t)-z} \leq \norm{u(t_1)-z} + \int_{t_1}^t \norm{\dot{u}(s)} ds \leq \norm{u(t_1)-z} + t \sup_{s\ge t_1} \norm{\dot{u}(s)} .
\]
{Let us examine successively the different terms which enter the second member of  \eqref{eq:binf}. The first term  is bounded according to claim~{\ref{lem:smoothexplicitestimates2}}}. The second term is also bounded since 
\[
\int_{t_1}^\tau\frac{\|g(t)\|\|u(t)-z\|}{t}dt \le \pa{\frac{\|u(t_1)-z\|}{t_1}+\sup\limits_{t \ge t_1} \norm{\dot{u}(t)}}\int_{t_1}^{+\infty}
\pa{\|e(t)\|+\beta\|\dot{e}(t)\|}dt < +\infty .
\]
The third term can be handled by integration by parts,
\begin{equation*}
\int_{t_1}^\tau \pa{\frac 1t\int_t^{+\infty}\|g(s)\|ds} dt 
=\log\tau\int_\tau^\infty \|g(s)\| ds\
-\log t_1 \int_{t_1}^{+\infty} \|g(s)\| ds
+\int_{t_1}^\tau \|g(t)\|\log t ~ dt .
\end{equation*}
For the fourth term, set $\DS{K(\tau)=-\int_{t_1}^\tau \frac{1}{t^{\alpha+1}}\frac{d}{dt}(t^\alpha \dot{h}(t))dt}$ and integrate by parts twice to get,
\begin{align*}
K(\tau)&=-\brac{\frac{1}{t}\dot{h}(t)}_{t_1}^\tau-(\alpha+1)\int_{t_1}^\tau \frac{1}{t^2}\dot{h}(t)dt
= -\brac{\frac{1}{t}\dot{h}(t)}_{t_1}^\tau-\frac{(1+\alpha)}{\tau^2} h(\tau) + \frac{(1+\alpha)}{t_1^2} h(t_1)
-2(1+\alpha)\int_{t_1}^\tau \frac{h(t)}{t^3} dt \\
&\leq -\brac{\frac{1}{t}\dot{h}(t)}_{t_1}^\tau + \frac{(1+\alpha)}{t_1^2} h(t_1)
\leq \frac{1}{t_1}\abs{\dotp{\dot{u}(t_1)}{u(t_1)-z}}+\frac{1}{\tau}\abs{\dotp{\dot{u}(\tau)}{u(\tau)-z}} + \frac{(1+\alpha)}{t_1^2} h(t_1) \\
&\leq C+ \sup\limits_{t\ge t_1}\norm{\dot{u}(t)}\pa{\frac{\|u(t_1)-z\|}{t_1}+\sup\limits_{t \ge t_1} \norm{\dot{u}(t)}} < +\infty.
\end{align*}
For the last term, we infer from Lemma~\ref{L:int_bounded} that
\[
\sup_{\tau > t_1} -\beta\int_{t_1}^\tau \pa{\frac{1}{t}-\frac{\alpha\beta}{t^2}}\dot{I}(t) dt < +\infty .
\]
Overall, we have shown that there exists a constant $C > 0$ such that \eqref{eq:binf} reads
\[
\frac{1}{4} W_{\infty}  \log\pa{\frac{\tau}{t_1}} \le C+\pa{\sup\limits_{t\ge t_1}\norm{\dot{u}(t)}}\pa{\log \tau\int_\tau^\infty\|g(s)\| ds+\int_{t_1}^\tau \|g(t)\| \log t dt} .
\]
Observe that $\lim_{\tau \to +\infty} \int_\tau^\infty\|g(s)\| ds = 0$ since $g$ is integrable. Then, divide the last inequality by $\log\pa{\frac{\tau}{t_1}}$ and let $\tau\to\infty$. According to  Lemma~\ref{basic-int},  we get that $W_\infty \le 0$. Thus $W_\infty=0$, hence the contradiction. 
{The last statement,  $\lim\limits_{t\to+ \infty} \norm{\dot{x}(t)}=0$, is obtained by following an argument similar to the one above, which  now  uses the perturbed version of the classical energy function, namely
\[
W_0 (t) \eqdef \frac{1}{2} \norm{\dot{x}(t)}^2+(f(x(t))-\bar{f}) - \int_t^{+\infty} \dotp{\dot{x}(\tau)}{g(\tau)} d\tau .
\]
We do not detail this proof for the sake of brevity.
Then, according to $\nabla f (x(t)) = \frac{1}{\beta}\left(\dot{u}(t)-\dot{x}(t)\right)$, we obtain  the  convergence of $\nabla f (x(t))$ to zero.
}
\qed
\end{proof}

\begin{remark}
The result in \cite[Lemma~4.1]{attouch2018fast} is a particular case of Theorem~\ref{lem:smoothexplicitestimates} when $\beta=0$. Theorem~\ref{lem:smoothexplicitestimates} is also a generalization of \cite[Theorem~1.3 and Proposition~1.5]{APR1} to the perturbed case. Performing this generalization necessitates a different Lyapunov function together with particular novel estimates in the course of derivation.  
\end{remark}

\subsubsection{Fast convergence rates}
We now move on to showing fast convergence of the objective. For this, we will need to strengthen the integrability assumption on the errors. We will denote in this section the two functions
\begin{equation}\label{def:w_delta}
w(t) \eqdef 1-\frac{\beta}{t} \qandq \delta(t) \eqdef t^2 w(t) .
\end{equation}
{
\begin{theorem}\label{fast_conv_smooth}
Let  $f \in \cC^2(\cH)$, $e\in\cC^1([t_0, +\infty[, \cH)$. Suppose that the damping parameters satisfy $\alpha > 3$, $\beta > 0$.   Suppose that   $\DS{\int_{t_0}^{+\infty} t \|e(t)\| dt<+\infty}$ and $\DS{\int_{t_0}^{+\infty} t \|\dot e(t)\| dt<+\infty}$.   Then, for any  solution trajectory  $x$ of~\eqref{eq:origode_a} the following holds:
\begin{enumerate}[label={\rm (\roman*)}]
\item \label{item:fast_conv_smooth1}
$
\DS{f(x(t))-\inf_{\cH} f = o\pa{\frac{1}{t^2}}}  \mbox{ as } t \to +\infty.
$
\item \label{item:fast_conv_smooth2}
$
\DS{\int_{t_0}^{+\infty} t^2 \|\nabla f(x(t))\|^2 dt < +\infty} .
$
\item \label{item:fast_conv_smooth3}
$
\DS{\int_{t_0}^{+\infty} t\pa{f(x(t))-\min_{\cH} f} dt < +\infty} .
$
\item \label{item:fast_conv_smooth4a}
for any $x^\star \in S$,
$
\DS{\int_{t_0}^{+\infty} t\dotp{\nabla f(x(t))}{x(t)-x^\star} dt < +\infty} .
$
\item \label{item:fast_conv_smooth4b}
$
\DS{\int_{t_0}^{+\infty} t\|\dot{x}(t)\|^2 dt < +\infty}.
$ 
\item \label{item:fast_conv_smooth4c}
$
\DS{\norm{\dot{x}(t)} = o\pa{t^{-1}}}  \mbox{ as } t \to +\infty.
$
\end{enumerate}
\end{theorem}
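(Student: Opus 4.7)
The plan is to carry out a Lyapunov analysis extending the one used for the unperturbed system in \cite{APR1,attouch2018fast}, with perturbation terms absorbed via the stronger integrability $\int_{t_0}^{+\infty} t\|e(t)\|\,dt<+\infty$ and $\int_{t_0}^{+\infty} t\|\dot e(t)\|\,dt<+\infty$; together these hypotheses yield $\int_{t_0}^{+\infty} t\|g(t)\|\,dt<+\infty$ with $g=e+\beta\dot e$ as in \eqref{def_g}, so that \eqref{eq:origode_a} rewrites
\[
\ddot x(t)+\frac{\alpha}{t}\dot x(t)+\beta\tfrac{d}{dt}\nabla f(x(t))+\nabla f(x(t))+g(t)=0.
\]
Fix $x^\star\in S$, introduce the anchor vector $v(t)=(\alpha-1)(x(t)-x^\star)+t\bigl(\dot x(t)+\beta\nabla f(x(t))\bigr)$, and consider the Lyapunov function
\[
\mathcal E(t)=\delta(t)\bigl(f(x(t))-\bar f\bigr)+\tfrac12\|v(t)\|^2+\tfrac{\xi}{2}\|x(t)-x^\star\|^2,
\]
where $\delta(t)=t(t-\beta)$ and $\xi>0$ is a small parameter to be tuned.

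Differentiating $\mathcal E$, substituting $\ddot x+\beta\tfrac{d}{dt}\nabla f$ using the ODE, and exploiting the convexity inequality $f(x(t))-\bar f\le\langle\nabla f(x(t)),x(t)-x^\star\rangle$, I expect an estimate of the form
\[
\dot{\mathcal E}(t)\le -\phi(t)+t\|g(t)\|\,\|v(t)\|,
\]
where $\phi(t)\ge 0$ collects the dissipation terms $(\alpha-3)\,t\bigl(f(x(t))-\bar f\bigr)$, $(\alpha-1)\,t\,\bigl(\langle\nabla f(x(t)),x(t)-x^\star\rangle-(f(x(t))-\bar f)\bigr)$, $\beta(\alpha-1)(t-\beta)\|\nabla f(x(t))\|^2$ and $c\,\xi\,t\|\dot x(t)\|^2$. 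The hypothesis $\alpha>3$ is precisely what makes the first coefficient strictly positive, and $\xi$ can then be chosen so that the combined quadratic form in $(\dot x,\nabla f,x-x^\star)$ remains nonpositive. Since $\|v(t)\|\le\sqrt{2\mathcal E(t)}$, dropping the nonpositive terms gives the Bihari-type inequality $\tfrac{d}{dt}\sqrt{\mathcal E(t)}\le\tfrac{1}{\sqrt2}\,t\|g(t)\|$, and Lemma~\ref{lem:BrezisA5} with the integrability of $t\|g(t)\|$ yields $\sup_{t\ge t_0}\mathcal E(t)<+\infty$. Boundedness of $\delta(t)(f(x(t))-\bar f)$ then gives claim~\ref{item:fast_conv_smooth1} at rate $O(1/t^2)$, while re-integrating the refined differential inequality on $[t_0,+\infty[$ produces the integrability claims~\ref{item:fast_conv_smooth2}, \ref{item:fast_conv_smooth3}, \ref{item:fast_conv_smooth4a} and~\ref{item:fast_conv_smooth4b} simultaneously from the individual terms of $\phi$.

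To upgrade claim~\ref{item:fast_conv_smooth1} from $O(1/t^2)$ to $o(1/t^2)$, I would combine $\int_{t_0}^{+\infty} t(f(x(t))-\bar f)\,dt<+\infty$ of~\ref{item:fast_conv_smooth3} with a bound on $\tfrac{d}{dt}[t^2(f(x(t))-\bar f)]$ obtained from $\tfrac{d}{dt}f(x(t))=\langle\nabla f(x(t)),\dot x(t)\rangle$ and the $L^2$-type controls of~\ref{item:fast_conv_smooth2} and~\ref{item:fast_conv_smooth4b}; a classical argument in the spirit of \cite{attouch2018fast} then forces $t^2(f(x(t))-\bar f)\to 0$. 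Claim~\ref{item:fast_conv_smooth4c}, $\|\dot x(t)\|=o(1/t)$, is established analogously by analysing a second Lyapunov function adapted to $t^2\|\dot x(t)\|^2$ driven by \eqref{eq:origode_a}, combined with~\ref{item:fast_conv_smooth4b} and the gradient bound~\ref{item:fast_conv_smooth2}.

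The main technical obstacle will be executing the Lyapunov derivative computation cleanly enough that (a)~the cross terms $\langle\dot x,\nabla f(x)\rangle$, $\langle x-x^\star,\dot x\rangle$ and $\langle x-x^\star,\nabla f(x)\rangle$ combine with signs compatible with convexity, and (b)~the perturbation couples only to $v(t)$ through the factor $t\|g(t)\|$, so that no higher moment of $\|e\|+\beta\|\dot e\|$ is needed. The delicate balance is the choice of $\xi$, which must be small enough to preserve the negative definiteness of the quadratic form in $(\dot x,\nabla f)$ once the cross-term $\xi\langle x-x^\star,\dot x\rangle$ is included, yet large enough to control the residual $\|x-x^\star\|^2$ that appears naturally when $\alpha>3$.
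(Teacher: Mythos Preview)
Your overall strategy---define a Lyapunov function built on $v(t)=(\alpha-1)(x(t)-x^\star)+t(\dot x(t)+\beta\nabla f(x(t)))$ and $\delta(t)=t(t-\beta)$, differentiate, exploit convexity, apply Gronwall via the integrability of $t\|g(t)\|$, then read off the integral estimates from the dissipation---is exactly the route taken in the paper. The computation $\dot v(t)=-(t-\beta)\nabla f(x(t))-tg(t)$ is what makes the perturbation couple to $v$ only through $t\|g(t)\|$, as you correctly anticipate.

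There is, however, a concrete error in your Lyapunov design. Adding $\tfrac{\xi}{2}\|x(t)-x^\star\|^2$ to $\mathcal E$ \emph{without modifying $v$} does not produce a $c\xi t\|\dot x(t)\|^2$ dissipation term: differentiating this quadratic contributes only the cross term $\xi\langle x(t)-x^\star,\dot x(t)\rangle$, and since the derivative of $\tfrac12\|v\|^2$ with your $v$ contains no $\|\dot x\|^2$ term either (the $\dot x$-contributions cancel exactly when the coefficient in $v$ is $\alpha-1$), there is nothing to absorb this cross term. No choice of $\xi>0$ makes the resulting ``quadratic form in $(\dot x,\nabla f,x-x^\star)$'' nonpositive, because the $\|\dot x\|^2$ coefficient is identically zero. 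To generate $-\eps t\|\dot x\|^2$ in the derivative one must simultaneously shift the anchor to $v_\eps=(\alpha-1-\eps)(x-x^\star)+t(\dot x+\beta\nabla f)$ and replace $\delta(t)$ by $\delta(t)+\eps\beta t$; only then do the new cross terms $\eps(\alpha-1-\eps)\langle x-x^\star,\dot x\rangle$ and $\eps\beta t\langle\nabla f,\dot x\rangle$ cancel against the contributions from $\tfrac{\eps(\alpha-1-\eps)}{2}\|x-x^\star\|^2$ and the modified potential term. The paper uses precisely this $\mathcal E_\eps$ later (for weak convergence), but in the proof of the present theorem it takes a different path for \ref{item:fast_conv_smooth4b}: work with the plain $\mathcal E$ (no $\xi$) to obtain \ref{item:fast_conv_smooth1}--\ref{item:fast_conv_smooth4a}, then obtain \ref{item:fast_conv_smooth4b} separately by multiplying \eqref{eq:origode_a} by $t^2\dot x(t)$, integrating by parts, and feeding in \ref{item:fast_conv_smooth3}. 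The $o(1/t^2)$ and $o(1/t)$ upgrades are then extracted from the auxiliary monotone function $t^2\bigl(\tfrac12\|\dot x\|^2+f(x)-\bar f\bigr)+\text{(integrable tails)}$, essentially as you sketch.
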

}

\begin{proof}
{Following an argument similar to that of Theorem \ref{lem:smoothexplicitestimates}, since our analysis is asymptotic, there is no restriction in assuming that $t_0 > \frac{\beta(\alpha-2)}{\alpha-3}$.
This gives $w(t) \geq \frac{1}{\alpha-2} > 0$ for all $t \geq t_0$.} Define,
\[
v(t)\eqdef (\alpha-1)(x(t)-x^\star)+t(\dot x(t)+\beta\nabla f(x(t))).
\]
Take $T >t_0$, and define for all $t_0 \leq t \leq T$
\[
\cE(t) \eqdef \delta(t)(f(x(t))-f(x^\star))+\frac{1}{2}\|v(t)\|^2-\int_t^T \tau\dotp{v(\tau)}{g(\tau)} d\tau.
\]
This is a well-defined differentiable function. Taking its derivative in time yields
\[
\dot{\cE}(t)=\dot{\delta}(t)(f(x(t))-f(x^\star))+\delta(t)\dotp{\nabla f(x(t))}{\dot{x}(t)}+\dotp{v(t)}{\dot{v}(t) + t g(t)} .
\]
From \eqref{eq:origode_a}, we have,
\begin{align*}
\dot{v}(t)
&=\alpha \dot{x}(t)+\beta \nabla f(x(t))+t\pa{\ddot{x}(t)+\beta \nabla^2 f(x(t))\dot{x}(t)} \\
&= \alpha \dot{x}(t)+\beta \nabla f(x(t))+t\pa{-\frac{\alpha}{t}\dot{x}(t)-\nabla f(x(t))-g(t)} \\
&= -t\pa{1-\frac{\beta}{t}}\nabla f(x(t))-t g(t) .
\end{align*}
{Let us inject this expression into the {scalar} product $\dotp{v(t)}{\dot{v}(t) + t g(t)}$. After developing and rearranging, and taking into account  the definition of $w$ and $\delta$ (see \eqref{def:w_delta}), we  obtain}
\begin{equation}\label{eq:dotExp}
\dot{\cE}(t) = \dot{\delta}(t) (f(x(t))-f(x^\star)) - (\alpha-1)t w(t) \dotp{\nabla f(x(t))}{x(t)-x^\star} - \beta\delta(t)\|\nabla f(x(t))\|^2 .
\end{equation}
Convexity of $f$ then yields
\begin{equation}\label{eq:dotEbnd}
\dot{\cE}(t)+\beta\delta(t)\|\nabla f(x(t))\|^2+\pa{(\alpha-1)t w(t)-\dot{\delta}(t)} (f(x(t))-f(x^\star))\le 0 .
\end{equation}
{By assumption on the parameters, we have for any $t \geq t_0 > \frac{\beta(\alpha-2)}{\alpha-3}$, 
\begin{equation}\label{eq:dotEbnd_bis}
(\alpha-1)t w(t)-\dot{\delta}(t)=t\left ((\alpha-3) -\frac{\beta}{t}(\alpha-2) \right) 
\geq ct
\end{equation}
with $c= (\alpha-3) -\frac{\beta}{t_0}(\alpha-2) > 0$. Thefore,} \eqref{eq:dotEbnd} implies that $\cE$ is non-increasing. In turn, {by definition of $\cE$}
\begin{equation}\label{eq:Ebnd}
\delta(t)\left(f(x(t))-f(x^\star)\right)+\frac{1}{2} \|v(t)\|^2 \le  C - \int_{t_0}^t \tau \dotp{v(\tau)}{g(\tau)}d\tau
\end{equation}
with $C=\delta(t_0)(f(x_0)-f(x^\star))+\frac{1}{2}\|v(t_0)\|^2$.
{ The above argument is valid for all $t_0 \leq t \leq T$ and arbitrary $T$, therefore for all $ t \geq t_0$.
Neglecting the nonnegative term  $\delta(t)\left(f(x(t))-f(x^\star)\right)$,  we infer from \eqref{eq:Ebnd} }
\[
\frac{1}{2}\|v(t)\|^2\le C+\int_{t_0}^t \|v(\tau)\| \pa{\tau\|g(\tau)\|} d\tau .
\]
Lemma~\ref{lem:BrezisA5} {(Gronwall lemma)} then gives 
\begin{equation}\label{eq:boundvexplicit}
\|v(t)\| \le \pa{2C}^{1/2}+\int_{t_0}^t \tau \pa{\|e(\tau)\|+\beta \|\dot{e}(\tau)\|} d\tau ,
\end{equation}
and thus 
\begin{equation}\label{eq:supboundv}
\sup\limits_{t \ge t_0} \|v(t)\|<+\infty. 
\end{equation}
{By reinjecting this inequality into \eqref{eq:Ebnd} we obtain}
\begin{equation}\label{eq:boundobjexplicit}
\frac{t^2}{\alpha-2}\pa{f(x(t))-f(x^\star)} \leq \delta(t)\pa{f(x(t))-f(x^\star)}
\leq C + \sup\limits_{t\ge t_0} \|v(t)\| \int_{t_0}^t  \pa{\tau\|e(\tau)\|+\beta \tau\|\dot{e}(\tau)\|}d\tau < +\infty
\end{equation}
hence proving 
$$
\DS{f(x(t))-\inf_{\cH} f = \cO \pa{\frac{1}{t^2}}}  \mbox{ as } t \to +\infty.
$$
{We will see a little later how to refine this estimate and go from a capital $\cO$ to a small $o$ to prove the statement  {\ref{item:fast_conv_smooth1}}.
Then}, integrating~\eqref{eq:dotEbnd}, and using the fact that $\cE(t)$ is bounded from below by \eqref{eq:supboundv} and the assumptions on the errors, we get
\[
\beta \int_{t_0}^{+\infty} t^2 w(t)\|\nabla f(x(t))\|^2 dt \leq C ,
\]
and
\[
c \int_{t_0}^{+\infty} t (f(x(t))-f(x^\star)) dt \leq \int_{t_0}^{+\infty} \pa{(\alpha-1)tw(t)+\dot{\delta}(t)} (f(x(t))-f(x^\star))dt \leq C ,
\]
for some constant $C > 0$. This shows the integral estimates {\ref{item:fast_conv_smooth2}} and {\ref{item:fast_conv_smooth3}}.

\noindent Let us turn to statement
~\ref{item:fast_conv_smooth4a}. We embark from \eqref{eq:dotExp} to write, for some $\rho \in ]0,1[$ to be chosen shortly,
\begin{align*}
&\dot{\cE}(t) 
= \dot{\delta}(t) (f(x(t))-f(x^\star)) - (1-\rho)(\alpha-1)t w(t) \dotp{\nabla f(x(t))}{x(t)-x^\star} \\
&\qquad\qquad - \rho(\alpha-1)t w(t) \dotp{\nabla f(x(t))}{x(t)-x^\star} - \beta\delta(t)\|\nabla f(x(t))\|^2 \\
&\leq -\pa{(1-\rho)(\alpha-1)t w(t)-\dot{\delta}(t)} (f(x(t))-f(x^\star)) - \rho(\alpha-1)t w(t) \dotp{\nabla f(x(t))}{x(t)-x^\star} - \beta\delta(t)\|\nabla f(x(t))\|^2.
\end{align*}
To conclude, it remains to check that $\pa{(1-\rho)(\alpha-1)t w(t)-\dot{\delta}(t)}$ is non-negative. 
{Since $t_0 > \frac{\beta(\alpha-2)}{\alpha-3}$, we deduce by a continuity argument the existence of some $\eps >0$ such that 
$t_0 > \frac{\beta(\alpha-2-\eps)}{\alpha-3-\eps}$.}
Then, take $\rho=\eps/(\alpha-1) \in ]0,1[$. In view of the assumption on the parameters, we have
\begin{multline*}
(1-\rho)(\alpha-1)t w(t)-\dot{\delta}(t) 
= (\alpha-1-\eps)t w(t) - \dot{\delta}(t) = t((\alpha-3-\eps) w(t) - t\dot{w}(t)) \\
= t\pa{(\alpha-3-\eps) - \frac{(\alpha-2-\eps)\beta}{t}} \geq t_0\pa{(\alpha-3-\eps) - \frac{(\alpha-2-\eps)\beta}{t_0}} \geq 0.
\end{multline*}
For claim~\ref{item:fast_conv_smooth4b}, we multiply~\eqref{eq:origode_a} by $t^2\dot{x}(t)$ to get
\begin{equation*}
t^2 \dotp{\ddot{x}(t)}{\dot{x}(t)}+\alpha t\norm{\dot{x}(t)}^2 + t^2 \beta \dotp{\dot{x}(t)}{\nabla^2 f(x(t))\dot{x}(t)} + t^2 \dotp{\nabla f(x(t))}{\dot{x}(t)} + t^2\dotp{g(t)}{\dot{x}(t)} = 0 .
\end{equation*}
With the chain rule, Cauchy-Schwarz inequality and convexity of $f$, we obtain
\begin{equation}\label{eq:derivodea}
\frac{1}{2}t^2 \frac{d}{dt} \norm{\dot{x}(t)}^2+\alpha t \norm{\dot{x}(t)}^2 
+ t^2 \frac{d}{dt} (f(x(t))-\bar{f}) \le \|tg(t)\|\|t\dot{x}(t)\| .
\end{equation}
Integrating by parts on $[t_0,t]$ we get,
\begin{equation}\label{eq:mainbound}
\frac{t^2}{2}\norm{\dot{x}(t)}^2+(\alpha-1)\int_{t_0}^t s\|\dot{x}(s)\|^2 ds 
\le C_0+2\int_{t_0}^t s(f(x(s))-\bar{f}) ds+\int_{t_0}^t \|s g(s)\|\|s \dot{x}(s)\|ds
\end{equation}
for some non-negative constant $C_0$, where we have used claim~{\ref{item:fast_conv_smooth1}} of Theorem~\ref{fast_conv_smooth}. Now by claim~{\ref{item:fast_conv_smooth3}} of Theorem~\ref{fast_conv_smooth} and ignoring the non-negative terms since $\alpha > 1$, we obtain 
\begin{equation*}
\frac{1}{2}\norm{t\dot{x}(t)}^2 \le C_1 + \int_{t_0}^t \|s g(s)\|\|s \dot{x}(s)\|ds ,
\end{equation*}
for another non-negative constant $C_1$. {Applying Lemma~\ref{lem:BrezisA5} again  then gives}
\begin{equation}\label{eq:boundtdotx}
\sup_{t \geq t_0} t\|\dot{x}(t)\| < +\infty .
\end{equation}
Using this in \eqref{eq:mainbound}, {we also get that}
\begin{equation}\label{eq:intbounds}
\begin{aligned}
\int_{t_0}^{+\infty} t\|\dot{x}(t)\|^2 dt < +\infty .
\end{aligned}
\end{equation}
We finally turn to statement~\ref{item:fast_conv_smooth4c}. We embark from \eqref{eq:derivodea}, use \eqref{eq:boundtdotx}, and integrate on $[s,t]$ to see that
\begin{multline*}
t^2\pa{\frac{1}{2}\norm{\dot{x}(t)}^2+(f(x(t))-\bar{f})} - s^2\pa{\frac{1}{2}\norm{\dot{x}(s)}^2+(f(x(s))-\bar{f})} \\
+ (\alpha-1)\int_{s}^t \tau\|\dot{x}(\tau)\|^2 d\tau - 2\int_{s}^t \tau(f(x(\tau))-\bar{f}) d\tau - C\int_{s}^t \|\tau g(\tau)\|d\tau \leq 0,
\end{multline*}
where $C = \sup_{t \geq t_0} t\|\dot{x}(t)\|$. This means that the function 
\begin{equation*}
\cG(t) = t^2\pa{\frac{1}{2}\norm{\dot{x}(t)}^2+(f(x(t))-\bar{f})} 
+ (\alpha-1)\int_{t_0}^t \tau\|\dot{x}(\tau)\|^2 d\tau 
- 2\int_{t_0}^t \tau(f(x(\tau))-\bar{f}) d\tau - C\int_{t_0}^t \|\tau g(\tau)\|d\tau
\end{equation*}
is non-increasing on $[t_0,+\infty[$. Since it is bounded from below by assumption on the errors and claim~{\ref{item:fast_conv_smooth3}}, $\lim_{t \to +\infty} \cG(t)$ exists. This together with assertions~{\ref{item:fast_conv_smooth3} }and \ref{item:fast_conv_smooth4b} shows that the limit
\[
0 \leq L \eqdef \lim_{t \to +\infty} t^2\pa{\frac{1}{2}\norm{\dot{x}(t)}^2+(f(x(t))-\bar{f})}
\] 
exists. Suppose that $L > 0$. Then, there exists $s \geq t_0$ such that
\begin{equation*}
\int_{t_0}^{+\infty}\pa{\frac{t}{2}\norm{\dot{x}(t)}^2+t(f(x(t))-\bar{f})} dt \geq \int_{s}^{+\infty}t^2\pa{\frac{1}{2}\norm{\dot{x}(t)}^2+(f(x(t))-\bar{f})} t^{-1} dt 
\geq \int_{s}^{+\infty}\frac{L}{2t} dt = +\infty ,
\end{equation*}
leading to a contradiction with claims {\ref{item:fast_conv_smooth3}} and  \ref{item:fast_conv_smooth4b}. {This proves ~\ref{item:fast_conv_smooth4c} and completes the proof of \ref{item:fast_conv_smooth1} with small $o$ instead of capital $\mathcal O$}.\qed
\end{proof}
\begin{remark}
The first three claims (resp. fourth claim) of Theorem~\ref{fast_conv_smooth} are a non-trivial generalization of \cite[Theorem~3]{attouch2019first} (resp. \cite[Theorem~2.1]{attouchiterates2021}) to the perturbed case. The presence of perturbations necessitates a careful analysis of several bounds and new estimates to handle the presence of errors and eventually preserve the convergence rates.
\end{remark}
\begin{remark}
The choice of the viscous damping parameter $\alpha$ is important for optimality of the convergence rates obtained. For the subcritical case $\alpha \leq 3$, $\beta=0$ and $e \equiv 0$, it has  been shown by \cite{AAD} and \cite{ACR-subcrit} that the convergence rate of the objective values is $\displaystyle{\cO\pa{t^{-\frac{2\alpha}{3}}}}$, and these rates are optimal, that is, they can be attained, or approached arbitrarily closely. For $\alpha \geq 3$, the optimal rate $\displaystyle{\cO\pa{t^{-2}}}$ is achieved for the function $f(x) = \|x\|^{r}$ with $r \to +\infty$ \cite{ACPR}, and for $\alpha  < 3 $, the optimal rate $\displaystyle{\cO\pa{t^{-\frac{2\alpha}{3}}}}$ is achieved by taking $f(x) = \|x\|$ \cite{AAD}. Theorem~\ref{fast_conv_smooth} is consistent with these optimality results. The condition $\alpha > 3$ is important to get the asymptotic rate $o(1/t^2)$.
\end{remark}

\subsubsection{Convergence of the trajectories}
We complete our analysis by showing weak convergence of the trajectories. 
\begin{theorem}\label{T:weak_convergence-smooth}
{Assume that $e\in \cC^1([t_0, +\infty[; \cH)$ with $\DS{\int_{t_0}^{+\infty} t \|e(t)\| dt<+\infty}$ and $\DS{\int_{t_0}^{+\infty} t \|\dot e(t)\| dt<+\infty}$. Let $x(t)$ be a solution trajectory to~\eqref{eq:origode_a} for $\alpha > 3$ and $\beta > 0$. Then $x(t)$ converges weakly to a minimizer of $f$.}
\end{theorem}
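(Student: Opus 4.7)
The strategy is Opial's lemma, which reduces the weak convergence to showing (a) every weak sequential cluster point of $x(\cdot)$ lies in $S$, and (b) for every $x^\star \in S$, $\lim_{t\to+\infty}\|x(t)-x^\star\|$ exists. Property (a) is immediate from $f(x(t)) \to \inf_\cH f$ (Theorem~\ref{fast_conv_smooth}\ref{item:fast_conv_smooth1}) combined with the weak sequential lower semicontinuity of the convex lsc function $f$, so the work lies in establishing (b).

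Fix $x^\star \in S$ and introduce the corrected anchor
\[
\psi(t) \eqdef \demi \|x(t)-x^\star\|^2 + \beta \int_{t_0}^t \dotp{\nabla f(x(s))}{x(s)-x^\star}\,ds.
\]
By Theorem~\ref{fast_conv_smooth}\ref{item:fast_conv_smooth4a} and convexity of $f$, the integral term is a nondecreasing function of $t$ bounded by $t_0^{-1}\int_{t_0}^{+\infty}s\dotp{\nabla f(x(s))}{x(s)-x^\star}\,ds<+\infty$ and hence has a finite limit; therefore (b) is equivalent to showing $\lim_{t\to+\infty}\psi(t)$ exists. Differentiating $\psi$ twice, substituting $\ddot x + \beta\frac{d}{dt}\nabla f(x)$ from~\eqref{eq:origode_a}, and using $\dotp{\nabla f(x)}{x-x^\star} \ge 0$, one obtains, for $t$ large enough that $1 - \alpha\beta/t > 0$,
\[
\ddot\psi(t) + \frac{\alpha}{t}\dot\psi(t) \le \|\dot x(t)\|^2 + \beta\frac{d}{dt}f(x(t)) + \|x(t) - x^\star\|\,\|g(t)\|.
\]
Multiplying by $t$ and writing $\beta t\frac{d}{dt}(f(x)-\inf_\cH f) = \frac{d}{dt}\brac{\beta t(f(x)-\inf_\cH f)} - \beta (f(x)-\inf_\cH f)$, we obtain $\dot\Phi(t) \le k(t)$, where
\[
\Phi(t) \eqdef t\dot\psi(t) + (\alpha-1)\psi(t) - \beta t(f(x(t))-\inf_\cH f), \qquad k(t) \eqdef t\|\dot x(t)\|^2 + t\|x(t)-x^\star\|\,\|g(t)\|.
\]

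The main technical step is to establish $k \in L^1(t_0,+\infty)$. The first summand is integrable by Theorem~\ref{fast_conv_smooth}\ref{item:fast_conv_smooth4b}, and for the second one needs a priori boundedness of the trajectory---this is the principal obstacle, since we do not assume $\nabla f$ globally Lipschitz. I would obtain it by integrating the identity
\[
\frac{d}{dt}\brac{t^{\alpha-1}(x(t)-x^\star)} = t^{\alpha-2} v(t) - \beta t^{\alpha-1}\nabla f(x(t)),
\]
with $v(t) = (\alpha-1)(x(t)-x^\star) + t(\dot x(t)+\beta\nabla f(x(t)))$, and dividing by $t^{\alpha-1}$: the $v$-contribution is uniformly controlled by $\sup_t\|v(t)\| < +\infty$ (established in the proof of Theorem~\ref{fast_conv_smooth}), while the gradient contribution is handled via Cauchy--Schwarz against $\int_{t_0}^{+\infty} t^2\|\nabla f(x(t))\|^2 dt < +\infty$ (Theorem~\ref{fast_conv_smooth}\ref{item:fast_conv_smooth2}); the assumption $\alpha>3$ ensures that $\int_{t_0}^{t}s^{2\alpha-4}ds\lesssim t^{2\alpha-3}$ and yields a $\cO(t^{-1/2})$ remainder. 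Once $k \in L^1$ is secured, the function $t\mapsto\Phi(t) - \int_{t_0}^t k$ is nonincreasing; it is bounded below since $\psi \ge 0$, $\beta t(f(x(t))-\inf_\cH f) \to 0$ from~\ref{item:fast_conv_smooth1}, and $t\dot\psi(t) \ge t\dotp{x(t)-x^\star}{\dot x(t)} \ge -\|x(t)-x^\star\|\cdot t\|\dot x(t)\|$ is bounded below by \ref{item:fast_conv_smooth4c} together with trajectory boundedness. Hence $\Phi(t)$ has a finite limit $L$; since $\beta t(f(x)-\inf_\cH f) \to 0$, this gives $\lim_{t\to+\infty}\brac{(\alpha-1)\psi(t) + t\dot\psi(t)} = L$. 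The identity $(\alpha-1)\psi + t\dot\psi = t^{2-\alpha}\frac{d}{dt}(t^{\alpha-1}\psi)$ combined with $\alpha > 1$ and an elementary Ces\`aro-type argument then yields $\psi(t)\to L/(\alpha-1)$, completing the verification of Opial's lemma.
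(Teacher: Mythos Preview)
Your argument is correct and reaches the goal, but the route differs from the paper's in two places worth noting.

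\textbf{Trajectory boundedness.} The paper introduces, alongside the Lyapunov function $\cE$ of Theorem~\ref{fast_conv_smooth}, a one--parameter family $\cE_\eps$ built from $v_\eps(t)=(\alpha-1-\eps)(x(t)-x^\star)+t(\dot x(t)+\beta\nabla f(x(t)))$. Since both $\sup_t\|v(t)\|$ and $\sup_t\|v_\eps(t)\|$ are finite by the same Gronwall argument, and $v(t)-v_\eps(t)=\eps(x(t)-x^\star)$, boundedness of $x$ drops out immediately. Your alternative --- integrating the identity $\frac{d}{dt}[t^{\alpha-1}(x-x^\star)]=t^{\alpha-2}v-\beta t^{\alpha-1}\nabla f(x)$ and controlling the gradient piece by Cauchy--Schwarz against the estimate $\int t^2\|\nabla f(x)\|^2<\infty$ --- is a clean, self--contained replacement that avoids the second Lyapunov function entirely.

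\textbf{Existence of the anchor limit.} The paper shows that both $\cE(t)$ and $\cE_\eps(t)$ converge, hence so does $\cE_\eps-\cE$; after peeling off terms that vanish by Theorem~\ref{fast_conv_smooth} and boundedness of $x$, this yields that $p(t)=\frac{\alpha-1}{2}\|x-x^\star\|^2+t\dotp{\dot x}{x-x^\star}+t\dotp{\nabla f(x)}{x-x^\star}$ has a limit, which is then recast as $q+\frac{t}{\alpha-1}\dot q$ (up to a convergent integral) and finished via \cite[Lemma~7.2]{APR1}. You instead derive a direct second--order differential inequality for a slightly different anchor $\psi$ (with coefficient $\beta$ rather than $\alpha-1$ on the integral, chosen so that $\dot\psi=\dotp{x-x^\star}{\dot x+\beta\nabla f(x)}$ absorbs the Hessian damping cleanly), integrate once, and arrive at the same endgame: $(\alpha-1)\psi+t\dot\psi$ has a limit, whence $\psi$ does by the same Ces\`aro--type lemma.

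Both approaches are equivalent in strength; yours is a bit more hands--on (one differential inequality rather than two energy functions), while the paper's two--energy trick recycles machinery already set up and delivers trajectory boundedness as a free byproduct of the $\eps$--perturbation.
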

\begin{proof}

Keeping in mind that the goal is to apply Opial's Lemma (see Lemma~\ref{Opial}), we will now show that $\lim_{t \to +\infty}\|x(t)-x^\star\|$ exists. 
{Following an argument similar to that of Theorem \ref{lem:smoothexplicitestimates} and \ref{fast_conv_smooth}, since our analysis is asymptotic, there is no restriction in assuming that $t_0 > \frac{\beta(\alpha-2)}{\alpha-3}$. Hence the existence of $\eps >0$ such that
$t_0 \geq \frac{\beta(\alpha-2-\eps)}{\alpha-3-\eps}$ for some $\eps \in ]0,\alpha-3[$.}
Recall the Lyapunov function $\cE$ from the proof of Theorem~\ref{fast_conv_smooth}, and define its generalized version
\begin{equation}\label{eq:Eeps}
\cE_\eps(t) \eqdef 
\pa{\delta(t)+\eps\beta t}(f(x(t))-f(x^\star))+\frac{1}{2}\|v_\eps(t)\|^2\\
+\frac{\eps(\alpha-1-\eps)}{2}\norm{x(t)-x^\star}^2-\int_t^T \tau\dotp{v_\eps(\tau)}{g(\tau)} d\tau ,
\end{equation}
where 
\[
v_\eps(t) = (\alpha-1-\eps)(x(t)-x^\star)+t(\dot x(t)+\beta\nabla f(x(t))) .
\]
One can check, arguing as for $\cE$, that for $t_0 \leq t\leq T$
\begin{equation*}
\dot{\cE}_\eps(t) = \pa{\dot{\delta}(t)+\eps\beta}(f(x(t))-f(x^\star)) - (\alpha-1-\eps)t w(t) \dotp{\nabla f(x(t))}{x(t)-x^\star} 
- \beta\delta(t)\|\nabla f(x(t))\|^2
- \eps t \norm{\dot{x}(t)}^2.
\end{equation*}
Convexity of $f$ then entails
\begin{equation*}
\dot{\cE}_\eps(t) + \pa{(\alpha-1-\eps)t w(t) - \dot{\delta}(t)}(f(x(t))-f(x^\star)) +\beta\delta(t)\|\nabla f(x(t))\|^2
+ \eps t \norm{\dot{x}(t)}^2 \leq 0 .
\end{equation*}
The assumption on the parameters gives
\begin{equation*}
(\alpha-1-\eps)t w(t) - \dot{\delta}(t) = t((\alpha-3-\eps) w(t) - t\dot{w}(t)) \geq t_0\pa{(\alpha-3-\eps) - \frac{(\alpha-2-\eps)\beta}{t_0}} \geq 0.
\end{equation*}
Thus, ignoring the non-negative terms in this inequality entails that $\cE_\eps(\cdot)$ is a decreasing function on $[t_0,T[$. 
{According to the boundedness of  $\cE_\eps(\cdot)$,  an argument similar to that developed in Theorem 5 gives that 
\begin{equation}\label{eq:supboundv_epsilon}
\sup\limits_{t \ge t_0} \|v_\eps(t)\|<+\infty, 
\end{equation}
with a bound which is independent of $\eps$ and $T$.
Comparing with \eqref{eq:supboundv} gives 
\begin{equation}\label{eq:supbound_x}
\sup\limits_{t \ge t_0} \|x(t)\|<+\infty, 
\end{equation}
where we use that the previous argument is valid for arbitrary $t\leq T$, hence for all $t\geq t_0$.} 
As a consequence, the energy functions $\cE(\cdot)$ and $\cE_{\eps}(\cdot)$ with $T=+\infty$ are well-defined on $[t_0,+\infty[$, and are then Lyapunov functions for the dynamical system \eqref{eq:origode_a}. Both $\cE(t)$ and $\cE_{\eps}(t)$ thus have limits as $t \to +\infty$, and so does their difference
\begin{multline*}
\cE_{\eps}(t) - \cE(t) = \eps\beta t(f(x(t))-f(x^\star))
-\frac{\eps(\alpha-1)}{2}\norm{x(t)-x^\star}^2 - \eps t\dotp{\dot{x}(t)}{x(t)-x^\star} \\ 
- \eps t\dotp{\nabla f(x(t))}{x(t)-x^\star} + \eps \int_t^{+\infty} \tau\dotp{x(\tau)-x^\star}{g(\tau)} d\tau .
\end{multline*}
{By Theorem~\ref{fast_conv_smooth}\ref{item:fast_conv_smooth1}, the first term converges to $0$ as $t \to +\infty$. By the integrability assumptions on the errors and boundedness of $x(t)$ (see ~\eqref{eq:supbound_x}), the last term also converges to $0$ as $t \to +\infty$. We have then shown that the limit as $t$ goes to infinity of 
\[
p(t) \eqdef \frac{\alpha - 1}{2}\norm{x(t) -x^\star}^{2} + t \dotp{\dot{x}(t)}{x(t) -x^\star} + t \dotp{\nabla f(x(t))}{x(t) -x^\star} 
\]
exists. Set
\[
q(t)\eqdef \frac{\alpha - 1}{2}\norm{x(t) -x^\star}^{2} + (\alpha - 1)\int_{t_0}^t \dotp{\nabla f(x(s))}{x(s) - x^\star} ds. 
\]
We obviously have
\[
p(t)= q(t) + \frac{t}{\alpha - 1}\dot{q}(t)  - (\alpha - 1)\int_{t_0}^t\dotp{\nabla f(x(s))}{x(s) -x^\star} ds.
\]
By Theorem~\ref{fast_conv_smooth}\ref{item:fast_conv_smooth4a}, and since $\dotp{\nabla f(x(s))}{x(s) -x^\star}$ is non-negative, we have that
\begin{equation}\label{eq:limiprodgradf}
\lim_{t\to +\infty}  \int_{t_0}^t \dotp{\nabla f(x(s))}{x(s) -x^\star} ds 
\end{equation}
exists. Overall, we have shown that 
\[
\lim_{t \to +\infty}\pa{q(t) + \frac{t}{\alpha-1}\dot{q}(t)} 
\]
exists. Since $\alpha > 1$, it follows from \cite[Lemma~7.2]{APR1} that $\lim_{t \to +\infty} q(t)$ exists. Using again \eqref{eq:limiprodgradf}, we deduce that $\lim_{t \to +\infty}\norm{x(t)-x^\star}$ exists for any $x^\star \in S$.} From claim {\ref{item:fast_conv_smooth1}} of Theorem~\ref{fast_conv_smooth} (see also Lemma~\ref{lem:smoothexplicitestimates}{\ref{lem:smoothexplicitestimates4}}), it follows that
for any sequence $\pa{x(t_n)}_{n \in \N}$ which converges weakly to, say, $\bar{x}$, we have
\[
f(\bar{x}) \leq \liminf_{n \to +\infty} f(x(t_n)) = \lim_{t \to +\infty} f(x(t)) = \bar{f} ,
\]
\ie, $\bar{x} \in S$. Consequently, all the conditions of Lemma~\ref{Opial} are satisfied, hence the weak convergence of the trajectories. \qed
\end{proof}

\begin{remark}
In \cite[Theorem~2.2]{attouchiterates2021}, the authors proved weak convergence of the trajectory for the perturbation-free system \eqref{eq:origode}. Theorem~\ref{T:weak_convergence-smooth} shows that weak convergence is preserved under perturbations provided that they verify reasonable integrability results. Again, the proof  necessitates new estimates and bounds to cope with the presence of errors.
\end{remark}

\begin{remark}
The condition $\alpha > 3$ is known to play an important role to show that each trajectory converges weakly to a minimizer. The case $\alpha = 3$, which corresponds to Nesterov's historical algorithm when $\beta=0$ and $e \equiv 0$, is critical. In fact, even for those inertial systems with $\alpha = 3$, convergence of the trajectories remains an open problem (except in one dimension where it holds as shown in \cite{ACR-subcrit}).
\end{remark}

\subsection{Implicit Hessian Damping}\label{s:approxconv}

{We now turn to the second-order ODE \eqref{eq:odetwo_a} where $f \in \cC^1(\cH)$, {$e \in \cC([t_0,+\infty[)$} and $\beta(t)=\gamma+\frac{\beta}{t}$, $\gamma,\beta \geq 0$. Let us denote for brevity $\bar{f} \eqdef \inf_{\cH} f$. Given $x^\star \in S$, we consider the function}
\begin{multline}\label{eq:lyap}
\cE(t) = a(t)\pa{f\pa{x(t)+\beta(t)\dot{x}(t)}-\bar{f}}+\frac{1}{2}\|b(t)(x(t)-x^\star)+c(t)\dot{x}(t)\|^2
+\frac{d(t)}{2}\|x(t)-x^\star\|^2 \\ -\int_t^{+\infty} c(\tau)\dotp{b(\tau)(x(\tau)-x^\star)+c(\tau) \dot{x}(\tau)}{e(\tau)} d\tau
-\int_t^{+\infty} a(\tau)\beta(\tau) \dotp{\nabla f\pa{x(\tau)+\beta(\tau)\dot{x}(\tau)}}{e(\tau)} d\tau 
\end{multline}
parametrized by some functions $a(t)$, $b(t)$, $c(t)$ and $d(t)$ to be specified later.

\subsubsection{Lyapunov function}
We first show that for proper choices of $(a(t),b(t),c(t),d(t))$ as a function of the problem parameters $(\alpha,\gamma,\beta)$, $\cE$ can serve as a Lyapunov function for \eqref{eq:odetwo_a}. We will denote for short $\alpha(t)=\frac{\alpha}{t}$.
\begin{lemma}\label{lem:decreaselyap}
Assume that {$f \in \cC^1(\cH)$, $e \in \cC([t_0,+\infty[)$, and}
\begin{equation}\label{eq:conditionsscalarfunc}
\begin{cases}
\dot{a}(t)-b(t)c(t) &\le 0,\\
-a(t)\beta(t) &\leq 0,\\
-a(t)\alpha(t)\beta(t)+a(t)\dot{\beta}(t)+a(t)-c(t)^2+b(t)c(t)\beta(t) &= 0,\\
\dot{b}(t)b(t)+\frac{\dot{d}(t)}{2} &\le 0,\\
\dot{b}(t)c(t)+b(t)(b(t)+\dot{c}(t)-c(t)\alpha(t))+d(t) &=0,\\
c(t)(b(t)+\dot{c}(t)-c(t)\alpha(t)) &\le 0 .
\end{cases}
\end{equation}
Then
\begin{eqnarray}\label{eq:edotfour}
\dot{\cE}(t) &\le& (\dot{a}(t)-b(t)c(t))(f(x(t)+\beta(t)\dot{x}(t))-\bar{f})-a(t)\beta(t)\|\nabla f(x(t)+\beta(t)\dot{x}(t))\|^2  \nonumber \\
 &&+\pa{\dot{b}(t)b(t)+\frac{\dot{d}(t)}{2}}\|x(t)-x^\star\|^2+c(t)(b(t)+\dot{c}(t)-c(t)\alpha(t))\norm{\dot{x}(t)}^2 \le 0 .
\end{eqnarray}
\end{lemma}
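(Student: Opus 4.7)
The plan is to differentiate $\cE(t)$ directly, term by term, and substitute $\ddot{x}(t)$ from the ODE \eqref{eq:odetwo_a}. Writing $y(t) \eqdef x(t)+\beta(t)\dot{x}(t)$ and $v(t) \eqdef b(t)(x(t)-x^\star)+c(t)\dot{x}(t)$ to declutter the computation, I would split $\cE$ into five pieces: the potential $a(t)(f(y(t))-\bar f)$, the squared norm $\tfrac12\|v(t)\|^2$, the anchor $\tfrac{d(t)}{2}\|x(t)-x^\star\|^2$, and the two integral tails. The chain rule for the first piece requires $\dot y(t)$, and plugging the ODE gives $\dot y(t) = (1+\dot\beta(t)-\alpha(t)\beta(t))\dot x(t) - \beta(t)\nabla f(y(t)) - \beta(t) e(t)$; a similar substitution for $\dot v(t)$ shows it contains a $-c(t) e(t)$ term.

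The decisive observation is that the derivatives of the two integral tails evaluate to $+c(t)\langle v(t),e(t)\rangle$ and $+a(t)\beta(t)\langle \nabla f(y(t)),e(t)\rangle$, which exactly cancel the two $e(t)$-generated contributions coming from differentiating the first two pieces. This explains the design of $\cE$ and reduces $\dot{\cE}(t)$ to an error-free expression involving $f(y(t))-\bar f$, $\|\nabla f(y(t))\|^2$, $\|x(t)-x^\star\|^2$, $\|\dot{x}(t)\|^2$, and three cross terms in $\langle x(t)-x^\star,\dot x(t)\rangle$, $\langle \nabla f(y(t)),\dot x(t)\rangle$, and $\langle \nabla f(y(t)),x(t)-x^\star\rangle$.

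Next I would peel off the cross terms via the equality constraints in \eqref{eq:conditionsscalarfunc}. The fifth identity annihilates the coefficient of $\langle x(t)-x^\star,\dot x(t)\rangle$. The third identity converts the coefficient of $\langle \nabla f(y(t)),\dot x(t)\rangle$ into $-b(t)c(t)\beta(t)$, so this term becomes $-b(t)c(t)\langle \nabla f(y(t)),\beta(t)\dot x(t)\rangle = -b(t)c(t)\langle \nabla f(y(t)),y(t)-x(t)\rangle$. Adding the remaining $-b(t)c(t)\langle \nabla f(y(t)),x(t)-x^\star\rangle$ gives the clean quantity $-b(t)c(t)\langle \nabla f(y(t)),y(t)-x^\star\rangle$. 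Convexity of $f$ (noting $b(t)c(t)\geq 0$, which is tacit in the first inequality when $\dot a\geq 0$) then yields $\langle \nabla f(y(t)),y(t)-x^\star\rangle \geq f(y(t))-\bar f$, bounding this contribution above by $-b(t)c(t)(f(y(t))-\bar f)$.

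Substituting back reproduces exactly the four-term upper bound in \eqref{eq:edotfour}, after which the remaining inequality conditions (first, second, fourth, sixth) make each of the four surviving terms non-positive. The main obstacle is purely bookkeeping: making sure every coefficient in the expansion of $\dot{\cE}(t)$ is grouped correctly, so that the equality conditions (three and five) really do clear the two cross terms they are designed to clear, and the convexity step produces the $-b(t)c(t)$ factor with the right sign. Once the algebra is organised in this way, the conclusion $\dot\cE(t)\leq 0$ is immediate.
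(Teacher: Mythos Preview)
Your proposal is correct and follows essentially the same route as the paper: differentiate $\cE$ term by term, observe that the derivatives of the two integral tails cancel the $e(t)$-contributions, then use the equality conditions (third and fifth) to eliminate the cross terms and convexity to convert $-b(t)c(t)\langle \nabla f(y(t)),y(t)-x^\star\rangle$ into $-b(t)c(t)(f(y(t))-\bar f)$. The only cosmetic difference is ordering: the paper applies the convexity inequality to $-b(t)c(t)\langle \nabla f(y(t)),x(t)-x^\star\rangle$ first (after writing $x(t)-x^\star=y(t)-x^\star-\beta(t)\dot x(t)$), which produces a $+b(t)c(t)\beta(t)\langle\nabla f(y(t)),\dot x(t)\rangle$ that the third condition then cancels, whereas you invoke the third condition first and convexity second; one small caveat is that your justification of $b(t)c(t)\ge 0$ via ``the first inequality when $\dot a\ge 0$'' is not quite self-contained (the first inequality only gives $b(t)c(t)\ge \dot a(t)$), and indeed the paper simply uses this sign tacitly.
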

\begin{proof}
{Recall that since $f \in \cC^1(\cH)$ and $e \in \cC([t_0,+\infty[)$, \eqref{eq:odetwo_a} has a unique  classical global solution $x$; see paragraph after \eqref{eq:fos5_implicit}}. {We now proceed as in the proof of  Theorems~\ref{lem:smoothexplicitestimates} and \ref{fast_conv_smooth}, and first consider the function $\cE_T$ where the integrals involving the error terms are calculated on $[t,T]$, $T<+\infty$. This shows that
$\cE$ is well-posed under our assumptions. We can then compute the  time derivative of $\cE$  and use the chain rule to get}
\begin{eqnarray}
\dot{\cE}(t) &=& \dot{a}(t)\pa{f\pa{x(t)+\beta(t)\dot{x}(t)}-\bar{f}}+
a(t)\dotp{\nabla f \pa{x(t)+\beta(t)\dot{x}(t)}}{\dot{x}(t)+\dot{\beta}(t)\dot{x}(t)+\beta(t)\ddot{x}(t)} \nonumber \\
&&+ \dotp{(b(t)+\dot{c}(t))\dot{x}(t)+c(t)\ddot{x}(t)+\dot{b}(t)(x(t)-x^\star)}{b(t) (x(t)-x^\star)+c(t)\dot{x}(t)} \nonumber \\
&&+ \frac{\dot{d}(t)}{2}\|x(t)-x^\star\|^2+d(t)\dotp{\dot{x}(t)}{x(t)-x^\star} \nonumber \\
&&+ c(t)\dotp{b(t)(x(t)-x^\star)+c(t) \dot{x}(t)}{e(t)}+a(t)\beta(t)\dotp{\nabla f\pa{x(t)+\beta(t)\dot{x}(t)}}{e(t)}\label{eq:edot}
\end{eqnarray}
Using \eqref{eq:odetwo_a} in the second term of \eqref{eq:edot}, we get
\begin{multline}\label{eq:term1}
a(t)\dotp{\nabla f \pa{x(t)+\beta(t)\dot{x}(t)}}{\dot{x}(t)+\dot{\beta}(t)\dot{x}(t)+\beta(t)\ddot{x}(t)} \\ 
= a(t)\dotp{\nabla f\pa{x(t)+\beta(t)\dot{x}(t)}}{\pa{1+\dot{\beta}(t)-\alpha(t) \beta(t)} \dot{x}(t)-\beta(t)\nabla f\pa{x(t)+\beta(t)\dot{x}(t)}-\beta(t)e(t)} \\
= -a(t) \beta(t) \norm{\nabla f\pa{x(t)+\beta(t)\dot{x}(t)}}^2+\pa{1+\dot{\beta}(t)-\alpha(t) \beta(t)} a(t) \dotp{\nabla f\pa{x(t)+\beta(t)\dot{x}(t)}}{\dot{x}(t)}
\\-\beta(t)a(t)\dotp{\nabla f\pa{x(t)+\beta(t)\dot{x}(t)}}{e(t)} .
\end{multline}
We expand the third term in \eqref{eq:edot} as
\begin{eqnarray}
&&\dotp{(b(t)+\dot{c}(t))\dot{x}(t)+c(t)\ddot{x}(t)+\dot{b}(t)(x(t)-x^\star)}{b(t) (x(t)-x^\star)+c(t)\dot{x}(t)} \nonumber \\ 
&& = 
\dotp{(b(t)+\dot{c}(t)-c(t)\alpha(t))\dot{x}(t)-c(t)\nabla f\pa{x(t)+\beta(t)\dot{x}(t)}}{b(t) (x(t)-x^\star)+c(t)\dot{x}(t)}  \nonumber \\
&&\quad +\dotp{-c(t)e(t)+\dot{b}(t)(x(t)-x^\star)}{b(t) (x(t)-x^\star)+c(t)\dot{x}(t)} \nonumber \\
&&= -c(t)\dotp{\nabla f(x(t)+\beta(t)\dot{x}(t))}{b(t)(x(t)-x^\star)+c(t)\dot{x}(t)} \nonumber \\
&&\quad +\dotp{\dot{b}(t)(x(t)-x^\star)}{b(t)(x(t)-x^\star)+c(t)\dot{x}(t)} \nonumber\\ 
&&\quad +c(t)(b(t)+\dot{c}(t)-c(t)\alpha(t))\norm{\dot{x}(t)}^2-c(t)\dotp{e(t)}{b(t)(x(t)-x^\star)+c(t)\dot{x}(t)} . \label{eq:term2}
\end{eqnarray}
Plugging~\eqref{eq:term1} and~\eqref{eq:term2} into~\eqref{eq:edot}, we get,
\begin{eqnarray}
\dot{\cE}(t) &=& \dot{a}(t)\pa{ f\pa{x(t)+\beta(t)\dot{x}(t)}-\bar{f}}-a(t) \beta(t) \norm{\nabla f\pa{x(t)+\beta(t)\dot{x}(t)}}^2 \nonumber \\
 &+&c(t)(b(t)+\dot{c}(t)-c(t)\alpha(t))\norm{\dot{x}(t)}^2 + \pa{\dot{b}(t)b(t)+\frac{\dot{d}(t)}{2}}\|x(t)-x^\star\|^2 \nonumber\\
&+&\pa{b(t)^2+b(t)\dot{c}(t)+\dot{b}(t)c(t)-b(t)c(t)\alpha(t)+d(t)}\dotp{\dot{x}(t)}{x(t)-x^\star} \nonumber \\
&+& \pa{-a(t)\alpha(t)\beta(t)+a(t)\dot{\beta}(t)+a(t)-c(t)^2} \dotp{\nabla f\pa{x(t)+\beta(t)\dot{x}(t)}}{\dot{x}(t)}\nonumber
\\
&-& b(t)c(t)\dotp{\nabla f\pa{x(t)+\beta(t)\dot{x}(t)}}{x(t)-x^\star} .\label{eq:edottwo}
\end{eqnarray}
Since,
\begin{equation*}
\dotp{\nabla f(x(t)+\beta(t)\dot{x}(t))}{x(t)-x^\star} = \dotp{\nabla f(x(t)+\beta(t)\dot{x}(t))}{x(t)+\beta(t)\dot{x}(t)-x^\star} \\
-\dotp{\nabla f(x(t)+\beta(t)\dot{x}(t))}{\beta(t)\dot{x}(t)}
\end{equation*}
and using the convex (sub)differential inequality on $f$, 
we can write
\begin{multline*}
-b(t)c(t)\dotp{\nabla f(x(t)+\beta(t)\dot{x}(t)}{x(t)-x^\star} \\ 
\le -b(t)c(t)( f(x(t)+\beta(t)\dot{x}(t))-\bar{f})+b(t)c(t)\beta(t)\dotp{\nabla f(x(t)+\beta(t)\dot{x}(t))}{\dot{x}(t)}
\end{multline*}
and we arrive at
\begin{eqnarray}
&&\dot{\cE}(t) \le (\dot{a}(t)-b(t)c(t))(f(x(t)+\beta(t)\dot{x}(t))-\bar{f})-a(t)\beta(t)\|\nabla f(x(t)+\beta(t)\dot{x}(t))\|^2   \nonumber \\
&& \; +(-a(t)\alpha(t)\beta(t)+a(t)\dot{\beta}(t)+a(t)-c(t)^2+b(t)c(t)\beta(t))\dotp{\nabla f(x(t)+\beta(t)\dot{x}(t))}{\dot{x}(t)} \nonumber\\
&&\; +\pa{\dot{b}(t)b(t)+\frac{\dot{d}(t)}{2}}\|x(t)-x^\star\|^2 \nonumber\\
&& \;+(b(t)^2+b(t)\dot{c}(t)+\dot{b}(t)c(t)-b(t)c(t)\alpha(t)+d(t))\dotp{\dot{x}(t)}{x(t)-x^\star}
+c(t)(b(t)+\dot{c}(t)-c(t)\alpha(t))\norm{\dot{x}(t)}^2 .\nonumber
\end{eqnarray}
Thus,  conditions~\eqref{eq:conditionsscalarfunc}  guarantee that $\dot{\cE}(t)\le 0$, in particular they imply \eqref{eq:edotfour}. \qed
\end{proof}

Following the discussion of \cite[Remark~11]{alecsa2019extension}, in the rest of the section, we take
\begin{equation}\label{eq:odetwo_aparamchoice}
\begin{gathered}
\beta(t)=\gamma+\frac{\beta}{t}, \quad \gamma,\beta \geq 0, \\
b(t) \equiv b \in ]0,\alpha-1], \alpha > 1, \qquad c(t)=t \qandq d(t) \equiv b(\alpha-1-b). 
\end{gathered}
\end{equation}
Such a choice is reminescent of that in \eqref{eq:Eeps}. The choices of $d(t)$ and $b(t)$ comply with the fourth, fifth and sixth conditions of \eqref{eq:conditionsscalarfunc}. To satisfy the third condition, one has to take
\begin{equation}\label{eq:odetwo_choice_a}
a(t) = t^2\pa{1+\frac{(\alpha-b)\gamma t - \beta(\alpha+1-b)}{t^2-\alpha\gamma t - \beta(\alpha+1)}} .
\end{equation}
Clearly, for $t$ large enough, one has $a(t) \geq t^2$ and $\beta(t) \geq \gamma/2$. Thus, the second condition is in force. One can also verify that the first inequality is satisfied for $t$ large enough provided that $b > 2$ (and thus $\alpha > 3$) when $\gamma > 0$, and $b=2$ (with $\alpha=3$) when $\gamma=0$. 

\subsubsection{Fast convergence rates}
We start with the following boundedness properties.
\begin{lemma}\label{lem:boundconv}
Let
\[
E(t) = a(t)\pa{f\pa{x(t)+\beta(t)\dot{x}(t)}-\bar{f}}+\frac{1}{2}\|b(x(t)-x^\star)+t\dot{x}(t)\|^2
+\frac{b(\alpha-1-b)}{2}\|x(t)-x^\star\|^2 .
\]
Choose the parameters according to \eqref{eq:odetwo_aparamchoice}-\eqref{eq:odetwo_choice_a} with $\alpha > 3$, $\gamma > 0$. Define, for $t \geq t_0$,
\begin{equation}\label{eq:asm}
m(t) \eqdef 
\max\pa{t,L|a(t)\beta(t)|,L|a(t)|\beta(t)^2}. 
\end{equation}
Assume that $\nabla f$ is $L$-Lipschitz continuous, {$e \in \cC([t_0,+\infty[)$ and} $m(\cdot)e(\cdot) \in L^1(t_0,+\infty;\cH)$. Then, we have
  \begin{center}
  $\sup_{t \geq t_0} E(t) < +\infty$, $\sup_{t \geq t_0} t\norm{\dot{x}(t)} < +\infty$ and $\sup_{t \geq t_0} \|x(t)-x^\star\| < +\infty$.
  \end{center}
\end{lemma}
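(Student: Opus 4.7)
The strategy is to derive a Gronwall-type differential inequality for $E(t)$ and then apply Lemma~\ref{lem:BrezisA5}. Writing $\cE(t) = E(t) - I_1(t) - I_2(t)$, where $I_1(t), I_2(t)$ denote the two tail integrals appearing in \eqref{eq:lyap}, direct differentiation (justified via the truncation trick $\int_t^T$ already used in the proof of Lemma~\ref{lem:decreaselyap} to ensure well-posedness of the improper integrals) gives
\begin{equation*}
\dot E(t) = \dot \cE(t) + c(t)\dotp{b(x(t)-x^\star)+t\dot x(t)}{e(t)} + a(t)\beta(t)\dotp{\nabla f\pa{x(t)+\beta(t)\dot x(t)}}{e(t)}.
\end{equation*}
Since $\dot\cE(t)\le 0$ by Lemma~\ref{lem:decreaselyap} (the choice \eqref{eq:odetwo_aparamchoice}--\eqref{eq:odetwo_choice_a} meets the conditions \eqref{eq:conditionsscalarfunc} for $t_0$ large enough), Cauchy--Schwarz yields
\begin{equation*}
\dot E(t) \le c(t)\|e(t)\|\,\norm{b(x(t)-x^\star)+t\dot x(t)} + a(t)\beta(t)\|e(t)\|\,\norm{\nabla f\pa{x(t)+\beta(t)\dot x(t)}}.
\end{equation*}

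The key step is to dominate each factor on the right by $\sqrt{E(t)}$ up to constants. Since each of the three summands in the definition of $E(t)$ is non-negative (using $b<\alpha-1$ in the regime considered), one reads off immediately $\norm{b(x(t)-x^\star)+t\dot x(t)} \le \sqrt{2E(t)}$ and $\norm{x(t)-x^\star}\le \sqrt{2E(t)/(b(\alpha-1-b))}$. The triangle inequality then gives $t\norm{\dot x(t)} \le C_1\sqrt{E(t)}$ for a constant $C_1$ depending only on $(\alpha,b)$. Invoking the $L$-Lipschitz continuity of $\nabla f$ together with $\nabla f(x^\star)=0$ and $\beta(t)\le \gamma+\beta/t_0$ yields
\begin{equation*}
\norm{\nabla f\pa{x(t)+\beta(t)\dot x(t)}} \le L\norm{x(t)-x^\star} + L\beta(t)\norm{\dot x(t)} \le C_2\pa{1+\beta(t)/t}\sqrt{E(t)}.
\end{equation*}
Plugging these bounds back and using $c(t)=t\le m(t)$, $La(t)\beta(t)\le m(t)$, and $La(t)\beta(t)^2/t\le m(t)/t_0$, I obtain the sought inequality $\dot E(t) \le C\, m(t)\|e(t)\|\sqrt{E(t)}$ on $[t_0,+\infty[$, with $C$ depending only on the parameters.

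Integrating from $t_0$ gives $E(t) \le E(t_0) + C\int_{t_0}^t m(s)\|e(s)\|\sqrt{E(s)}\,ds$, and the squared form of the Gronwall--Bellman inequality (Lemma~\ref{lem:BrezisA5}) produces
\begin{equation*}
\sqrt{E(t)} \le \sqrt{E(t_0)} + \frac{C}{2}\int_{t_0}^{+\infty} m(s)\|e(s)\|\,ds < +\infty
\end{equation*}
thanks to the hypothesis $m(\cdot)e(\cdot)\in L^1(t_0,+\infty;\cH)$. Consequently $\sup_{t\ge t_0} E(t)<+\infty$, and the remaining bounds $\sup_t \norm{x(t)-x^\star}<+\infty$ and $\sup_t t\norm{\dot x(t)}<+\infty$ then follow at once from the pointwise estimates derived in the second paragraph.

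The main obstacle I anticipate lies in the gradient contribution to the error estimate: since $\nabla f$ must be controlled via both $\norm{x(t)-x^\star}$ and $\norm{\dot x(t)}$, a factor $La(t)\beta(t)^2/t$ appears that is neither $c(t)$ nor $La(t)\beta(t)$, and it is precisely to absorb this factor that the third component $La(t)\beta(t)^2$ is built into the definition \eqref{eq:asm} of $m(t)$. A secondary technicality is the well-posedness of the improper integrals in $\cE$; this is handled exactly as in Lemma~\ref{lem:decreaselyap}, by performing the computation first on truncated integrals over $[t,T]$ and extracting the bound on $\dot E(t)$ before sending $T\to+\infty$.
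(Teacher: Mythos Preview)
Your proof is correct and follows essentially the same strategy as the paper: use Lemma~\ref{lem:decreaselyap} to get $\dot\cE\le 0$, bound the gradient via $L$-Lipschitz continuity and $\nabla f(x^\star)=0$, absorb the resulting factors into $m(t)$, and close with the Gronwall-type Lemma~\ref{lem:BrezisA5}. The only difference is cosmetic: the paper drops the $a(t)(f-\bar f)$ term, uses Jensen to pass to the scalar $\|b(x-x^\star)+t\dot x\|+\sqrt{d}\,\|x-x^\star\|$, and applies Gronwall to that sum before returning to $E(t)$, whereas you apply Gronwall directly to $\sqrt{E(t)}$, which is slightly more economical.
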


\begin{proof}
Consider the function $\cE(t)$ in \eqref{eq:lyap} with the choices \eqref{eq:odetwo_aparamchoice}-\eqref{eq:odetwo_choice_a} for $c(t)$, $d(t)$, $b(t)$ and $a(t)$, with $b \in ]2,\alpha-1[$. For such a choice, there exists $t_1 \geq t_0$ such that for all $t\ge t_1$, $a(t) > 0$, $\beta(t) > 0$ (and in turn, $m(t) > 0$), and all conditions of \eqref{eq:conditionsscalarfunc} are satisfied. Thus, $\cE(t)$ is monotonically decreasing on $[t_1,+\infty[$ according to Lemma~\ref{lem:decreaselyap}. Since the solution $x(t)$ is continuous, it is bounded on $[t_0,t_1]$ and so without loss of generality we can assume that $t_1=t_0$ and proceed to show,
\begin{equation}\label{eq:bounde}
\begin{aligned}
&E(t)
\leq E(t_0)+\int_{t_0}^t \dotp{\tau\pa{b(x(\tau)-x^\star)+\tau \dot{x}(\tau)}+a(\tau) \beta(\tau)\nabla f\pa{x(\tau)+\beta(\tau)\dot{x}(\tau)}}{e(\tau)} d\tau \\
&= E(t_0)+\int_{t_0}^t \dotp{\tau\pa{b(x(\tau)-x^\star)+\tau \dot{x}(\tau)}+a(\tau) \beta(\tau)\pa{\nabla f\pa{x(\tau)+\beta(\tau)\dot{x}(\tau)}-\nabla f(x^\star)}}{e(\tau)} d\tau \\
&\leq E(t_0)+\int_{t_0}^t \bpa{\tau\norm{b(x(\tau)-x^\star)+\tau \dot{x}(\tau)}+a(\tau) \beta(\tau)\norm{\nabla f\pa{x(\tau)+\beta(\tau)\dot{x}(\tau)}-\nabla f(x^\star)}}\norm{e(\tau)} d\tau \\
&\leq E(t_0)+\int_{t_0}^t \bpa{\tau\norm{b(x(\tau)-x^\star)+\tau \dot{x}(\tau)}+a(\tau) \beta(\tau)L\norm{x(\tau)-x^\star+\beta(\tau)\dot{x}(\tau)}}\norm{e(\tau)} d\tau \\
&\leq E(t_0)+\int_{t_0}^t \bpa{\tau\norm{b(x(\tau)-x^\star)+\tau \dot{x}(\tau)}+La(\tau) \beta(\tau)\norm{x(\tau)-x^\star}+L a(\tau)\beta(\tau)^2\norm{\dot{x}(\tau)}}\norm{e(\tau)} d\tau .
\end{aligned}
\end{equation}
Denote $d = b(\alpha-1-b)$. We have $d > 0$. Moreover,  $a(t) > 0$ for $t \geq t_0$. One can then drop the first term in $E(t)$, and \eqref{eq:bounde} becomes, for any $t \geq t_0$,
\begin{align*}
&\frac{1}{2}\|b(x(t)-x^\star)+t\dot{x}(t)\|^2+\frac{d}{2}\|x(t)-x^\star\|^2\\
&\leq E(t_0)+\int_{t_0}^t \bpa{\tau\norm{b(x(\tau)-x^\star)+\tau \dot{x}(\tau)}+\sqrt{d}\frac{La(\tau) \beta(\tau)}{\sqrt{d}}\norm{x(\tau)-x^\star}+\tau\frac{L a(\tau)\beta(\tau)^2}{t_0}\norm{\dot{x}(\tau)}}\norm{e(\tau)} d\tau \\
&\leq E(t_0)+\int_{t_0}^t \bpa{\norm{b(x(\tau)-x^\star)+\tau \dot{x}(\tau)}+\sqrt{d}\norm{x(\tau)-x^\star}+\tau\norm{\dot{x}(\tau)}}\max\pa{1,d^{-1/2},t_0^{-1}}m(\tau)\norm{e(\tau)} d\tau \\
&\leq E(t_0)+\int_{t_0}^t \bpa{2\norm{b(x(\tau)-x^\star)+\tau \dot{x}(\tau)}+(b+\sqrt{d})\norm{x(\tau)-x^\star}}\max\pa{1,d^{-1/2},t_0^{-1}}m(\tau)\norm{e(\tau)} d\tau \\
&\leq E(t_0)+\int_{t_0}^t \bpa{\norm{b(x(\tau)-x^\star)+\tau \dot{x}(\tau)}+\sqrt{d}\norm{x(\tau)-x^\star}} C m(\tau)\norm{e(\tau)} d\tau ,
\end{align*}
for some constant $C \geq \max\pa{1,d^{-1/2},t_0^{-1}}\max\pa{2,1+\sqrt{\frac{b}{\alpha-1-b}}}$. Now, Jensen's inequality yields
\begin{align*}
&\frac{1}{4}\pa{\|b(x(t)-x^\star)+t\dot{x}(t)\| + \sqrt{d}\|x(t)-x^\star\|}^2 \leq \frac{1}{2}\|b(x(t)-x^\star)+t\dot{x}(t)\|^2+\frac{d}{2}\|x(t)-x^\star\|^2\\
&\leq E(t_0)+\int_{t_0}^t \bpa{\norm{b(x(\tau)-x^\star)+\tau \dot{x}(\tau)}+\sqrt{d}\norm{x(\tau)-x^\star}} C |m(\tau)|\norm{e(\tau)} d\tau .
\end{align*}
Using {the Gronwall} Lemma~\ref{lem:BrezisA5}, we conclude that, for all $t \geq t_0$
\begin{equation}\label{eq:integrability}
\|b(x(t)-x^\star)+t\dot{x}(t)\| + \sqrt{d}\|x(t)-x^\star\| \leq 2\sqrt{|E(t_0)|} + 2 C \int_{t_0}^{+\infty} |m(\tau)| \|e(\tau)\|d\tau < +\infty ,
\end{equation}
whence we get boundedness of $\|x(t)-x^\star\|$ and $\|b(x(t)-x^\star)+t\dot{x}(t)\|$. The triangle inequality then shows that $t\norm{\dot{x}(t)}$ is also bounded. Using this into \eqref{eq:bounde} together with Cauchy-Schwarz inequality and our integrability assumption, we deduce boundedness of $E(t)$.\qed
\end{proof}

\begin{remark}\label{rem:sumbound}
Recall our discussion on the parameters in \eqref{eq:odetwo_aparamchoice}-\eqref{eq:odetwo_choice_a}. Notice that we have $t^2  \leq a(t) \leq t^2 + \kappa_1$ and $\gamma/2 \leq \beta(t) \leq \beta_0$ for $t$ large enough, where $\beta_0 = \gamma + |\beta|/t_0$ and $\kappa$ is a non-negative constant. In turn, for $t$ large enough, we have
\[
\max\pa{1,\gamma/2}L\gamma/2t^2 \leq m(t) \leq \max\pa{1,\beta_0}L\beta_0(t+\kappa_1)^2 .
\]
Clearly the condition $m(\cdot)e(\cdot) \in L^1(t_0,+\infty;\cH)$ is equivalent to $t^2e(t) \in L^1(t_0,+\infty;\cH)$. 
\end{remark}
\noindent From Lemma~\ref{lem:boundconv}, we obtain the following convergence rates and integral estimates.
\begin{theorem}\label{th:int}
Under the assumptions of Lemma~\ref{lem:boundconv}, the following holds:
\begin{enumerate}[label={\rm (\roman*)}]

\item \label{th:intclaim1}
{$\DS{f\left(x(t)+\beta(t)\dot{x}(t)\right) -  \min_{\cH} f = \cO\pa{\frac{1}{t^2}}}$ as $t \to +\infty$};
\item \label{th:intclaim2}
$\DS{\norm{\dot{x}(t)} = \cO\pa{\frac{1}{t}}}$ as $t \to +\infty$;
\item \label{th:intclaim3}
$\DS{
\int_{t_0}^{+\infty} t \pa{f(x(t)) -  \min_{\cH} f }dt < +\infty
}
$;
\item \label{th:intclaim4}
$\DS{
\int_{t_0}^{+\infty} t^2 \norm{\nabla f\left( x(t)+\beta(t)\dot{x}(t)\right)}^2dt < +\infty
}
$;
\item \label{th:intclaim5}
$
\DS{\int_{t_0}^{+\infty} t\norm{\dot{x}(t)}^2 dt < +\infty
}
$.
\end{enumerate}
\end{theorem}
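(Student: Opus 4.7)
The plan is to extract everything from the Lyapunov inequality \eqref{eq:edotfour} together with the boundedness estimates already established in Lemma~\ref{lem:boundconv}. Throughout I will write $y(t)\eqdef x(t)+\beta(t)\dot{x}(t)$ and use the parameter choices \eqref{eq:odetwo_aparamchoice}--\eqref{eq:odetwo_choice_a}. With these choices, $a(t)\geq t^2$ for $t$ large, $\beta(t)\geq\gamma/2$ for $t$ large, $\dot{b}(t)=\dot{d}(t)=0$, and
\[
\dot{a}(t)-b(t)c(t)\leq -(b-2)t+o(t), \quad c(t)(b(t)+\dot{c}(t)-c(t)\alpha(t))=-(\alpha-1-b)t,
\]
where $b\in]2,\alpha-1[$ so that both coefficients are strictly negative multiples of $t$ for $t$ large enough. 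There is no loss of generality in replacing $t_0$ by a large enough initial time so that all of these sign conditions hold on $[t_0,+\infty[$.

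Claims \ref{th:intclaim1} and \ref{th:intclaim2} are essentially immediate. Lemma~\ref{lem:boundconv} provides $\sup_{t\geq t_0} E(t)<+\infty$. Since every term defining $E(t)$ is nonnegative, one can keep only $a(t)(f(y(t))-\bar f)$ and use $a(t)\geq t^2$ to obtain \ref{th:intclaim1}. For \ref{th:intclaim2}, combine $\sup_t\|b(x(t)-x^\star)+t\dot x(t)\|<+\infty$ and $\sup_t\|x(t)-x^\star\|<+\infty$ via the triangle inequality.

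For the three integral estimates I would integrate \eqref{eq:edotfour} between $t_0$ and $t$. The resulting right-hand side contains three nonpositive terms whose coefficients are (up to constants) $-t$, $-a(t)\beta(t)$ and $-t$, respectively. Thus
\[
\int_{t_0}^{t}\Bigl[c_1 s(f(y(s))-\bar f) + c_2 a(s)\beta(s)\|\nabla f(y(s))\|^2 + c_3 s\|\dot{x}(s)\|^2\Bigr]ds \leq \cE(t_0)-\cE(t).
\]
The key observation is that $\cE(t)$ is bounded from below: indeed, $E(t)\geq 0$, and the two residual error integrals in \eqref{eq:lyap} are uniformly bounded under the assumption $m(\cdot)e(\cdot)\in L^1(t_0,+\infty;\cH)$ together with the Lipschitz property of $\nabla f$ and the bound $\sup_{t}(\|x(t)-x^\star\|+t\|\dot x(t)\|)<\infty$ obtained in Lemma~\ref{lem:boundconv}. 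Letting $t\to+\infty$ and using $a(t)\beta(t)\geq\gamma t^2/2$ immediately delivers \ref{th:intclaim4} and \ref{th:intclaim5}, together with the variant $\int_{t_0}^{+\infty} t(f(y(t))-\bar f)\,dt<+\infty$ of \ref{th:intclaim3}.

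The remaining step is to trade $f(y(t))$ for $f(x(t))$ in \ref{th:intclaim3}. Since $\nabla f$ is $L$-Lipschitz, the descent lemma applied with $u=y(t)$ and $v=x(t)$ gives
\[
f(x(t))-\bar f \leq (f(y(t))-\bar f) + \beta(t)\|\nabla f(y(t))\|\,\|\dot{x}(t)\| + \tfrac{L}{2}\beta(t)^2\|\dot{x}(t)\|^2.
\]
Multiply by $t$ and integrate. The first term was just handled. For the second, Cauchy--Schwarz with $\beta(t)\leq \beta_0$ gives
\[
\int_{t_0}^{+\infty} t\beta(t)\|\nabla f(y(t))\|\,\|\dot{x}(t)\|\,dt \leq \beta_0\Bigl(\int_{t_0}^{+\infty} t^2\|\nabla f(y(t))\|^2 dt\Bigr)^{1/2}\Bigl(\int_{t_0}^{+\infty} \|\dot{x}(t)\|^2 dt\Bigr)^{1/2},
\]
where the first factor is finite by \ref{th:intclaim4} and the second is finite since $t\|\dot x(t)\|$ is bounded, so $\|\dot x(t)\|^2\leq C/t^2$ is integrable on $[t_0,+\infty[$. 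The third term is controlled by \ref{th:intclaim5} since $\beta(t)^2$ is bounded.

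I do not anticipate a serious obstacle: the parameter choices in \eqref{eq:odetwo_aparamchoice}--\eqref{eq:odetwo_choice_a} were tailored precisely to turn \eqref{eq:edotfour} into three negative terms with the correct weights, and Lemma~\ref{lem:boundconv} already absorbs the delicate bookkeeping around the perturbation $e(t)$. The only point that requires mild care is checking that $\cE(t)$ is bounded below uniformly in $t$, which is where the integrability assumption $m(\cdot)e(\cdot)\in L^1$ enters (via Cauchy--Schwarz applied to the residual integrals and the Lipschitz bound on $\nabla f(y(t))-\nabla f(x^\star)$), exactly as in the proof of \eqref{eq:bounde}.
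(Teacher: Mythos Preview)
Your proposal is correct and follows essentially the same approach as the paper: claims \ref{th:intclaim1}--\ref{th:intclaim2} come directly from Lemma~\ref{lem:boundconv}, the integral estimates \ref{th:intclaim4}--\ref{th:intclaim5} and $\int t(f(y(t))-\bar f)\,dt<\infty$ follow by integrating \eqref{eq:edotfour} and using that $\cE$ is bounded, and \ref{th:intclaim3} is obtained from the descent lemma plus Cauchy--Schwarz. The only cosmetic difference is that for the cross term in \ref{th:intclaim3} you split $t\|\nabla f(y)\|\cdot\|\dot x\|$ and invoke the pointwise bound $\|\dot x(t)\|=\cO(1/t)$, whereas the paper splits $\sqrt{t}\|\nabla f(y)\|\cdot\sqrt{t}\|\dot x\|$ and uses \ref{th:intclaim5} directly; both are fine.
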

\begin{proof} {
Claim~{\ref{th:intclaim2}} follows from Lemma~\ref{lem:boundconv}. Discarding the non-negative terms in $E(t)$, Lemma~\ref{lem:boundconv} together with the fact that $a(t) \geq t^2$ for $t$ large enough, also gives
\[
f\left(x(t)+\beta(t)\dot{x}(t)\right) -  \min_{\cH} f = \cO\pa{\frac{1}{t^2}}.
\]
To show the remaining integral estimates, consider the function $\cE(\cdot)$ in \eqref{eq:lyap} with the choices \eqref{eq:odetwo_aparamchoice}-\eqref{eq:odetwo_choice_a} of $c(t)$, $d(t)$, $b(t)$ and $a(t)$, where $b \in ]2,\alpha-1[$. We first argue similarly to \cite{alecsa2019extension} to show that for $t$ large enough, we have $\dot{a}(t)-bt \leq -\frac{(\alpha-3)t}{2}$, since $\alpha > 3$ and $b > 2$. In addition, for (a possibly different) $t$ large enough, it is straightforward to see that $a(t)\pa{\gamma+\beta/t} \geq t^2\gamma/2$. With these bounds, \eqref{eq:edotfour} reads, for  $t$ large enough,
\begin{equation}\label{eq:edotfourspecial}
\dot{\cE}(t) \leq -\frac{(\alpha-3)t}{2}(f(x(t)+\beta(t)\dot{x}(t))-\bar{f})
-t^2\gamma/2\norm{\nabla f(x(t)+\beta(t)\dot{x}(t))}^2 \\ -t(\alpha-1-b)\norm{\dot{x}(t)}^2 .
\end{equation}
Integrating \eqref{eq:edotfourspecial}, and using that $\cE$ is bounded thanks to Lemma~\ref{lem:boundconv}, we get statements {\ref{th:intclaim4}-\ref{th:intclaim5}} and 
\begin{equation}\label{eq:claim3}
\int_{t_0}^{+\infty} t \pa{f(x(t)+\beta(t)\dot{x}(t)) - \min_{\cH} f} dt < +\infty .
\end{equation}
 Let $\beta_0 = \gamma+\abs{\beta}/t_0$. By the gradient descent lemma, 
\begin{align}
f(x(t)) - f(x(t)+\beta(t)\dot{x}(t)) 
&\leq -\beta(t)\dotp{\nabla f(x(t)+\beta(t)\dot{x}(t))}{\dot{x}(t)} + \frac{L}{2} \beta(t)^2\norm{\dot{x}(t)}^2 \label{lem:descent}\\ 
&\leq \beta_0 \norm{\nabla f(x(t)+\beta(t)\dot{x}(t))}\norm{\dot{x}(t)} + \frac{L}{2} \beta_0^2\norm{\dot{x}(t)}^2.\nonumber
\end{align}
By Cauchy-Schwarz inequality, we have
\begin{eqnarray*}
&&\int_{t_0}^{+\infty} t \pa{f(x(t)) -  \min_{\cH} f}dt 
\leq \int_{t_0}^{+\infty} t \pa{f(x(t)+\beta(t)\dot{x}(t)) -  \min_{\cH} f}dt \\
&&+\beta_0 \pa{\int_{t_0}^{+\infty} t \norm{\nabla f(x(t)+\beta(t)\dot{x}(t))}^2 dt}^{1/2} \pa{\int_{t_0}^{+\infty} t\norm{\dot{x}(t)}^2 dt}^{1/2} 
+ \frac{L\beta_0^2}{2}\int_{t_0}^{+\infty} t\norm{\dot{x}(t)}^2 dt.
\end{eqnarray*}
In view of \eqref{eq:claim3} and claims {\ref{th:intclaim4}-\ref{th:intclaim5}}, statement {\ref{th:intclaim3}} follows. }\qed
\end{proof}

\subsubsection{Convergence of the trajectories}
We now turn to showing weak convergence of the trajectories to a minimizer. 
\begin{theorem}\label{T:weak_convergence-smooth-implicit}
Suppose that the assumptions of Lemma~\ref{lem:boundconv} hold. Then $x(t)$ converges weakly to a minimizer of $f$.
\end{theorem}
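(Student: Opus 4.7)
The plan is to verify the two conditions of Opial's Lemma (Lemma~\ref{Opial}): (a) $\lim_{t \to +\infty} \|x(t) - x^\star\|$ exists for every $x^\star \in S$, and (b) every weak sequential cluster point of $x(t)$ lies in $S$.

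For (b), I would exploit the regularity already established. By Theorem~\ref{th:int}\ref{th:intclaim2}, $\|\dot{x}(t)\| = \cO(1/t)$, and $\beta(t) = \gamma + \beta/t$ is bounded on $[t_0,+\infty[$, so the shift $\beta(t)\dot{x}(t) \to 0$ strongly. If $x(t_n) \rightharpoonup \bar x$ along some $t_n \to +\infty$, then $x(t_n) + \beta(t_n)\dot{x}(t_n) \rightharpoonup \bar x$ as well, and weak lower semicontinuity of the convex function $f$ combined with Theorem~\ref{th:int}\ref{th:intclaim1} gives
\[
f(\bar x) \leq \liminf_{n \to +\infty} f\pa{x(t_n) + \beta(t_n)\dot{x}(t_n)} = \min_\cH f,
\]
so $\bar x \in S$.

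For (a), I would mimic the strategy used in the explicit case (Theorem~\ref{T:weak_convergence-smooth}). The idea is to introduce a perturbed Lyapunov function $\cE_\eps(t)$ obtained from \eqref{eq:lyap} by replacing the constant $b$ by $b-\eps$ (with the corresponding adjustments of $d(t) = (b-\eps)(\alpha-1-b+\eps)$ and of $a(t)$ via \eqref{eq:odetwo_choice_a}). For $\eps > 0$ sufficiently small, the system of conditions \eqref{eq:conditionsscalarfunc} remains in force asymptotically, so Lemma~\ref{lem:decreaselyap} yields $\dot{\cE}_\eps(t) \leq 0$ for $t$ large, while a rerun of the argument of Lemma~\ref{lem:boundconv} (using the same integrability condition $m(\cdot)e(\cdot) \in L^1$, since the bounds on $a(t)$ and $\beta(t)$ change only by constants) shows $\cE_\eps$ is bounded. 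Hence both $\cE(t)$ and $\cE_\eps(t)$ admit finite limits as $t \to +\infty$, and so does their difference.

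The difference $\cE_\eps(t) - \cE(t)$ is designed so that the leading $a(t)(f(x(t)+\beta(t)\dot x(t)) - \bar f)$ terms cancel (up to a $\cO(t)(f-\bar f) = \cO(1/t)$ contribution that vanishes by Theorem~\ref{th:int}\ref{th:intclaim1}), the integral remainders tend to zero by the integrability of $m(\cdot)e(\cdot)$ together with the boundedness of the trajectory and of $t\dot{x}(t)$, and what remains is, up to terms going to zero, a linear combination of the form
\[
p(t) \eqdef A\, \norm{x(t) - x^\star}^2 + B\, t\, \dotp{\dot{x}(t)}{x(t)-x^\star} ,
\]
with $A, B > 0$ explicit in $\alpha, b, \eps$. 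Consequently $\lim_{t \to +\infty} p(t)$ exists. Setting $q(t) \eqdef \tfrac{1}{2}\norm{x(t)-x^\star}^2$, one has $p(t) = 2A\, q(t) + 2B\, t\, \dot{q}(t)$, and the classical technical lemma \cite[Lemma~7.2]{APR1} (applicable since $B/A > 0$) then yields that $\lim_{t \to +\infty} q(t)$ exists, i.e.\ condition (a) holds.

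The main obstacle is the third step: setting up $\cE_\eps$ so that \eqref{eq:conditionsscalarfunc} still hold asymptotically after perturbing $b$ (the third and fifth equations couple $a(t)$ nonlinearly to $b, c, d$ via \eqref{eq:odetwo_choice_a}), and carefully bookkeeping the cross-terms in $\cE_\eps - \cE$, especially those involving the two perturbation integrals, so as to isolate a combination of $\norm{x-x^\star}^2$ and $t\dotp{\dot x}{x-x^\star}$ modulo controllable remainders. The assumption $t^2 e(t) \in L^1$ (cf.\ Remark~\ref{rem:sumbound}) is precisely what is needed to ensure these remainders vanish.
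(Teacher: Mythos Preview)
Your handling of condition~(b) of Opial's Lemma is essentially identical to the paper's. For condition~(a), however, the paper takes a different and more direct route: it sets $h(t)=\tfrac12\|x(t)-x^\star\|^2$, computes $\ddot h+\tfrac{\alpha}{t}\dot h$ directly from \eqref{eq:odetwo_a}, and bounds it using convexity (which kills the term $\langle\nabla f(x+\beta\dot x),\,x+\beta\dot x-x^\star\rangle$), boundedness of the trajectory, and the Cauchy--Schwarz inequality. After multiplying by $t$ this yields $t\ddot h+\alpha\dot h\le g(t)$ with $g\in L^1(t_0,+\infty;\R)$, the integrability coming precisely from the integral estimates of Theorem~\ref{th:int}\ref{th:intclaim4}--\ref{th:intclaim5} and the assumption on $e$; then Lemma~\ref{lem:convw} gives the existence of $\lim_{t\to+\infty}h(t)$.

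Your perturbed-Lyapunov approach, mimicking Theorem~\ref{T:weak_convergence-smooth}, is also correct: for small $\eps>0$ one still has $b-\eps\in{]2,\alpha-1[}$, so all the conditions \eqref{eq:conditionsscalarfunc} (with $a_\eps$ given by \eqref{eq:odetwo_choice_a} with $b$ replaced by $b-\eps$) hold asymptotically, and the difference $\cE_\eps-\cE$ indeed reduces to $-\tfrac{\eps(\alpha-1)}{2}\|x-x^\star\|^2-\eps t\langle\dot x,x-x^\star\rangle$ plus terms going to zero, since $a_\eps(t)-a(t)=\cO(t)$ and the two perturbation integrals differ by quantities controlled by $t e(t)\in L^1$ and the boundedness of $x$ and $\nabla f(x+\beta\dot x)$. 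What your approach buys is a uniform treatment parallel to the explicit case; what the paper's buys is that it avoids re-verifying \eqref{eq:conditionsscalarfunc} and tracking $a_\eps-a$ and the integral remainders, exploiting instead the already-proved integral estimates directly through a short differential-inequality argument.
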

\begin{proof}
As in the explicit case, we  invoke {Opial}'s Lemma~\ref{Opial}. Recall that the trajectory $x(\cdot)$ is bounded by Lemma~\ref{lem:boundconv}. Therefore, for any sequence $\pa{x(t_n)}_{n \in \N}$ which converges weakly to, say, $\bar{x}$, as $t_n \to +\infty$, {Theorem~\ref{th:int}\ref{th:intclaim1}-\ref{th:intclaim2} entails that
\[
f(\bar{x}) \leq \liminf_{n \to +\infty} f(x(t_n)+ \beta(t_n)\dot{x}(t_n)) = \lim_{t \to +\infty} f\left(x(t)+\beta(t)\dot{x}(t)\right) = \bar{f} ,
\]
}
\ie, each weak cluster point of $x(t_n)$ belongs to $S$. To get weak convergence of the trajectory, it remains to show that $\lim_{t \to +\infty}\|x(t)-x^\star\|$ exists.

Let $h: t \in [t_0,+\infty[ ~ \mapsto \frac{1}{2}\|x(t)-x^\star\|^2$. {Under the assumptions on $f$ and $e$, $x$ is the unique classical global solution to \eqref{eq:odetwo_a}, \ie, $x \in \cC^2([t_0,+\infty[)$. Thus so is $h$ and}
\[
\dot{h}(t)=\dotp{\dot{x}(t)}{x(t)-x^\star} \qandq \ddot{h}(t)=\dotp{\ddot{x}(t)}{x(t)-x^\star}+\norm{\dot{x}(t)}^2 .
\]
From \eqref{eq:odetwo_a}, we obtain
\begin{align*}
&\ddot{h}(t)+\frac{\alpha}{t} \dot{h}(t)
=\dotp{\ddot{x}(t)+\frac{\alpha}{t}\dot{x}(t)}{x(t)-x^\star}+\norm{\dot{x}(t)}^2 \\
&= -\dotp{\nabla f(x(t)+\beta(t)\dot{x}(t))+e(t)}{x(t)-x^\star} + \norm{\dot{x}(t)}^2 \\
&= -\dotp{\nabla f(x(t)+\beta(t)\dot{x}(t))}{x(t)+\beta(t)\dot{x}(t)-x^\star} -\dotp{e(t)}{x(t)-x^\star} +\norm{\dot{x}(t)}^2 \\
&\quad+\beta(t)\dotp{\nabla f(x(t)+\beta(t)\dot{x}(t))}{\dot{x}(t)} .
\end{align*}
Convexity of $f$ implies,
\[
-\dotp{\nabla f(x(t)+\beta(t)\dot{x}(t))}{x(t)+\beta(t)\dot{x}(t)-x^\star} \leq \bar{f}-f(x(t)+\beta(t)\dot{x}(t)) \leq 0 ,
\]
and thus,
\begin{align*}
\ddot{h}(t)+\frac{\alpha}{t} \dot{h}(t)
&\leq \norm{x(t)-x^\star}\norm{e(t)} + \norm{\dot{x}(t)}^2 + \beta_0\norm{\nabla f(x(t)+\beta(t)\dot{x}(t))}\norm{\dot{x}(t)} \\
&\leq C\norm{e(t)} + \norm{\dot{x}(t)}^2 + \beta_0\norm{\nabla f(x(t)+\beta(t)\dot{x}(t))}\norm{\dot{x}(t)}
\end{align*}
where $C = \sup_{t \geq t_0}\norm{x(t)-x^\star} < +\infty$ thanks to Lemma~\ref{lem:boundconv}, and we denoted $\beta_0 = 1+|\beta|/t_0$. Multiplying both sides by $t$, we arrive at
\begin{align*}
t\ddot{h}(t)+\alpha \dot{h}(t)
&\leq C t\norm{e(t)} + t\norm{\dot{x}(t)}^2 + \frac{\beta_0}{\sqrt{t_0}} (t\norm{\nabla f(x(t)+\beta(t)\dot{x}(t))})(\sqrt{t}\norm{\dot{x}(t)}) .
\end{align*}
The right-hand side of this inequality belongs to $L^1(t_0,+\infty;\R)$ by assumption on the error, and using the Cauchy-Schwarz inequality and Theorem~\ref{th:int}\ref{th:intclaim4}-\ref{th:intclaim5} for the last two terms. {Since $h \in \cC^2([t_0,+\infty[$),} it then follows from Lemma~\ref{lem:convw} that $\lim_{t \to +\infty}\|x(t)-x^\star\|$ exists. We have now shown that all conditions of Lemma~\ref{Opial} are satisfied, {hence the weak convergence of the trajectories}. \qed
\end{proof}

\begin{remark}
For the unperturbed case, similar rates to ours in Theorem~\ref{th:int} and weak convergence of the trajectory were proved in \cite{alecsa2019extension}. Again, handling errors necessitates new estimates and bounds, for instance those established in Lemma~\ref{lem:boundconv}. 
\end{remark}

\subsection{Discussion}\label{s:discussion}
We now discuss the main differences between the two systems in terms of their stability to perturbations and the corresponding assumptions. Recall from Remark~\ref{rem:sumbound}, the integrability assumption $m(\cdot)e(\cdot) \in L^1(t_0,+\infty;\cH)$ required to ensure stability for system \eqref{eq:odetwo_a} is equivalent to ensuring that the second-order moment of the error $e(\cdot)$ is finite. One may wonder whether this is more stringent than the integrability assumptions for the explicit Hessian system \eqref{eq:origode_a} involving the control of the first-order moments of the error and its derivative (see Section~\ref{s:secondorderconv}). The answer is clearly affirmative in the scalar case with a simple integration by parts argument. {Indeed, supposing without loss of generality that $e(\cdot)$ is a non-increasing and non-negative function, one has
\[
\int_{t_0}^{+\infty} t |\dot{e}(t)| dt = -\int_{t_0}^{+\infty} t \dot{e}(t) dt \leq t_0e(t_0) +  \int_{t_0}^{+\infty}e(t) dt \leq t_0 e(t_0) +  t_0^{-2}\int_{t_0}^{+\infty}t^2|e(t)| dt .
\]
}
Another intuitive way to understand this is to look at what happens if the system is discretized with finite differences. In this case, the integrability assumptions on the errors for system \eqref{eq:origode_a} boil down to controlling only the  first-order moment of the (discretized) error.
{
Indeed, temporal discretization with fixed step size of $t\|\dot{x}(t)\|$  gives $k \|x_{k+1} -x_k\|$ whose summability is clearly implied by the summability of $k \|x_{k} \|$.
} 
We conclude this discussion by noting that Lipschitz continuity of the gradient is not needed for the estimates and convergence analysis of \eqref{eq:origode_a} while it is used extensively to analyze \eqref{eq:odetwo_a}. This is a distinctive avantage of \eqref{eq:origode_a} compared to \eqref{eq:odetwo_a}. This will be even more notable when extending to the non-smooth case; see Section~\ref{s:nonsmooth}.
\section{Smooth Strongly Convex Case}\label{s:sconvex}
We will successively examine the Explicit Hessian Damping, then the Implicit Hessian Damping.
\subsection{Explicit Hessian Damping}\label{s:secondordersconv}
In this section we consider the explicit Hessian system under the assumption of strong convexity of $f$. Following Polyak's heavy ball system \cite{BP}, consider the second-order perturbed system 
\begin{equation}\label{dyn-sc}
\ddot{x}(t) + 2\sqrt{\mu} \dot{x}(t) + \beta \nabla^2 f (x(t))\dot{x}(t) + \beta \dot{e}(t)+ \nabla f (x(t)) + e(t)=0 ,
\end{equation}
which has a fixed positive damping coefficient that is adjusted to the modulus $\mu$ of strong convexity of $f$. To study 
\eqref{dyn-sc}, we define the function $\cE : [t_0, +\infty[ 	\to \R_{+}$
\begin{align}
t \mapsto \cE (t) \eqdef f(x(t))- \min_{\cH} f  + \frac{1}{2} \| v(t) \|^2,
\label{dyn-sc-c}
\end{align}
where
\begin{equation}\label{dyn-sc-d} 
v(t)= \sqrt{\mu} (x(t) -x^\star) + \dot{x}(t)+ \beta \nabla f (x(t)) .
\end{equation}

\begin{theorem}\label{strong-conv-thm}
Suppose that $f: \cH \to \R$ is $\mu$-strongly convex for some $\mu >0$, let $x^\star$ be the unique minimizer of $f$. Let $x(\cdot): [t_0, + \infty[ \to \cH$ be a solution trajectory of \eqref{dyn-sc}. Suppose that 
\begin{enumerate}[label=\alph*)]
\item $\DS{ 0 \leq \beta \leq \frac{1}{2\sqrt{\mu}}}$. 
\item $\DS{\int_{t_0}^{+\infty} \|e(t)\| dt< +\infty}$ and $\DS{\int_{t_0}^{+\infty} \|\dot e(t)\| dt< +\infty}$.
\end{enumerate}
{Then the following properties are satisfied:}
\begin{enumerate}[label={\rm (\roman*)}]
\item Minimizing properties:  for all $t\geq t_0$
\[
\cE (t)   \leq \cE (t_0) e^{-\frac{\sqrt{\mu}}{2}(t-t_0)}  + M e^{-\frac{\sqrt{\mu}}{2}t}\int_{t_0}^t  e^{\frac{\sqrt{\mu}}{2}\tau} \| e(\tau)+\beta \dot{e}(\tau) \| d\tau,
\]
where $M \eqdef \sqrt{2 \cE(t_0)} + \DS{\int_{t_0}^{+\infty}  \| e(\tau)+\beta \dot{e}(\tau) \|  d \tau}$.
As a consequence,
\begin{eqnarray*}
&& \lim_{t\to +\infty} \cE (t) =0; \; \lim_{t\to +\infty}  f(x(t))=  \min_{\cH} f \\
&& \lim_{t\to +\infty}\|  x(t) -x^\star\|=
 \lim_{t\to +\infty}\| \nabla f (x(t))\|=
 \lim_{t\to +\infty}\| \dot{x}(t)\|=0.
\end{eqnarray*}
\item Convergence rates: suppose moreover that for some $p>0$, 
$
\DS{\|e(t)+\beta \dot{e}(t)\| = \cO\pa{\frac{1}{t^p}}},
$ as $t \to +\infty$.
Then
$
\cE (t)= \cO \pa{   \frac{1}{t^p} },
$
\ie \; $\cE (t)$ inherits the decay rate of the error terms. As a consequence, as $t \to +\infty$
\begin{eqnarray*}
&& f(x(t))-  \min_{\cH} f  = \cO \pa{   \frac{1}{t^p} };\\
&& \|  x(t) -x^\star\|^2= \cO \pa{   \frac{1}{t^p} }; \; 
\| \dot{x}(t)\|^2= \cO \pa{   \frac{1}{t^p} }.
\end{eqnarray*}
In addition, when $\beta >0$
\[
e^{- \sqrt{\mu}t} \int_{t_0}^t  e^{ \sqrt{\mu}s}\|  \nabla f (x(s))\|^2 ds = \cO \pa{   \frac{1}{t^p} }.
\]
\end{enumerate}
\end{theorem}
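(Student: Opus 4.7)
The plan is to establish a Lyapunov decay inequality of the form $\dot\cE(t) + \tfrac{\sqrt\mu}{2}\cE(t) + c\beta\|\nabla f(x(t))\|^2 \leq \|v(t)\|\,\|g(t)\|$, where I write $g(t):=e(t)+\beta\dot e(t)$, and then derive all claims by Gronwall-type manipulations. Differentiating $v$ along \eqref{dyn-sc} yields $\dot v(t)=-\sqrt\mu\,\dot x(t)-\nabla f(x(t))-g(t)$; substituting into $\dot\cE(t)=\dotp{\nabla f(x(t))}{\dot x(t)}+\dotp{v(t)}{\dot v(t)}$, the two $\dotp{\nabla f}{\dot x}$ contributions cancel, leaving
\[
\dot\cE(t)=-\sqrt\mu\|\dot x\|^2-\mu\dotp{x-x^\star}{\dot x}-\sqrt\mu\dotp{x-x^\star}{\nabla f(x)}-\sqrt\mu\beta\dotp{\nabla f(x)}{\dot x}-\beta\|\nabla f(x)\|^2-\dotp{v}{g}.
\]
Combining $\mu$-strong convexity ($\dotp{\nabla f(x)}{x-x^\star}\geq f(x)-f(x^\star)+\tfrac{\mu}{2}\|x-x^\star\|^2$) with Young's inequality on the cross terms $\mu\dotp{x-x^\star}{\dot x}$ and $\sqrt\mu\beta\dotp{\nabla f(x)}{\dot x}$, and exploiting $\beta\leq\tfrac{1}{2\sqrt\mu}$ (which forces $\beta(\sqrt\mu\beta-1)$ to be strictly negative so the leftover $\|\nabla f\|^2$ term can absorb the cross gradient-velocity term), one arrives at the targeted Lyapunov inequality for some explicit $c>0$. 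The main obstacle is the careful balancing of Young parameters needed to simultaneously secure the rate $\tfrac{\sqrt\mu}{2}$, retain a negative $\|\nabla f\|^2$ coefficient, and cancel all unsigned cross terms.

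Next I bound $\|v(t)\|$ uniformly. Using $\tfrac12\|v\|^2\leq\cE$ and dropping the non-negative $c\beta\|\nabla f\|^2$ term, a Gronwall argument applied to $\sqrt{\cE(t)}$ (which then satisfies $(\sqrt\cE)'+\tfrac{\sqrt\mu}{4}\sqrt\cE\leq\tfrac{1}{\sqrt 2}\|g\|$) gives $\sup_{t\geq t_0}\|v(t)\|\leq M:=\sqrt{2\cE(t_0)}+\int_{t_0}^{+\infty}\|g(\tau)\|\,d\tau<+\infty$, the finiteness coming from Hypothesis~b). Reinjecting this uniform bound into the Lyapunov inequality yields $\dot\cE+\tfrac{\sqrt\mu}{2}\cE\leq M\|g(t)\|$; multiplying by $e^{\sqrt\mu t/2}$ and integrating on $[t_0,t]$ produces the explicit estimate announced in (i). Its right-hand side converges to $0$ by a standard splitting argument ($\int_{t_0}^t=\int_{t_0}^{t/2}+\int_{t/2}^t$) combined with integrability of $\|g\|$, whence $\cE(t)\to 0$. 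Strong convexity then yields $f(x(t))\to\min f$ and $\|x(t)-x^\star\|\to 0$, continuity of $\nabla f$ gives $\|\nabla f(x(t))\|\to 0$, and the identity $v=\sqrt\mu(x-x^\star)+\dot x+\beta\nabla f$ forces $\|\dot x(t)\|\to 0$.

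For (ii), when $\|g(t)\|=\cO(t^{-p})$, a standard integration-by-parts estimate gives $e^{-\sqrt\mu t/2}\int_{t_0}^t e^{\sqrt\mu\tau/2}\|g(\tau)\|\,d\tau=\cO(t^{-p})$, hence $\cE(t)=\cO(t^{-p})$; the rates for $f(x)-\min f$, $\|x-x^\star\|^2$ and $\|\dot x\|^2$ then follow from strong convexity and $\|v\|^2\leq 2\cE$. For the final claim when $\beta>0$, I retain the dissipation term $c\beta\|\nabla f\|^2$ and observe that, by a slightly different balancing of Young parameters still permitted by $\beta\leq\tfrac{1}{2\sqrt\mu}$ (at the price of constants), one obtains a sharper inequality of the form $\dot\cE+\sqrt\mu\cE+c'\beta\|\nabla f(x)\|^2\leq M\|g(t)\|$. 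Multiplying by $e^{\sqrt\mu t}$ and integrating yields $c'\beta\int_{t_0}^t e^{\sqrt\mu s}\|\nabla f(x(s))\|^2\,ds\leq e^{\sqrt\mu t_0}\cE(t_0)+M\int_{t_0}^t e^{\sqrt\mu s}\|g(s)\|\,ds$, and dividing by $e^{\sqrt\mu t}$ produces the claimed $\cO(t^{-p})$ bound via the same integration-by-parts estimate as above.
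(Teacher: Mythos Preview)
Your approach to parts (i) and most of (ii) is essentially the paper's: you compute $\dot v=-\sqrt\mu\,\dot x-\nabla f(x)-g$, combine strong convexity with quadratic-form balancing to reach $\dot\cE+\tfrac{\sqrt\mu}{2}\cE\leq\|v\|\|g\|$, then bound $\|v\|$ by a Gronwall step (the paper uses Lemma~\ref{lem:BrezisA5} directly on $\tfrac12\|v\|^2\leq\cE(t_0)+\int\|v\|\|g\|$, while you differentiate $\sqrt{\cE}$; both give $M=\sqrt{2\cE(t_0)}+\int\|g\|$), reinject, and integrate. The limits and $\cO(t^{-p})$ rates follow as you describe.

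The genuine gap is the last claim, the weighted integral bound on $\|\nabla f(x(s))\|^2$. Your proposed route---obtaining a sharper inequality $\dot\cE+\sqrt\mu\,\cE+c'\beta\|\nabla f\|^2\leq M\|g\|$ by ``a slightly different balancing of Young parameters''---cannot work. Take $f(x)=\tfrac{\mu}{2}\|x\|^2$, $g\equiv 0$, and evaluate at a point where $\dot x=0$, $x\neq 0$. A direct computation gives $\dot\cE=-\mu\sqrt\mu\|x\|^2-\beta\mu^2\|x\|^2$ and $\sqrt\mu\,\cE=\mu\sqrt\mu\|x\|^2+\beta\mu^2\|x\|^2+\tfrac{\beta^2\mu^2\sqrt\mu}{2}\|x\|^2$, so $\dot\cE+\sqrt\mu\,\cE=\tfrac{\beta^2\mu^2\sqrt\mu}{2}\|x\|^2>0$ already, before adding any $c'\beta\|\nabla f\|^2$ term. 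Thus even the rate $\sqrt\mu$ (without the gradient term) is unattainable for this $\cE$; the rate $\tfrac{\sqrt\mu}{2}$ in the main inequality is sharp.

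The paper proceeds differently for this last estimate: from $\|v(t)\|^2\leq 2\cE(t)=\cO(t^{-p})$ it expands $\|v\|^2$ and recognizes $\dotp{\dot x}{2\beta\nabla f(x)+2\sqrt\mu(x-x^\star)}=\tfrac{d}{dt}Z(t)$ with $Z(t):=2\beta(f(x(t))-\bar f)+\sqrt\mu\|x(t)-x^\star\|^2$. Using convexity on $\dotp{x-x^\star}{\nabla f}$, this yields the auxiliary differential inequality $\dot Z+\sqrt\mu\,Z+\beta^2\|\nabla f(x)\|^2\leq C/t^p$, and multiplying by $e^{\sqrt\mu t}$ and integrating gives the claim. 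The key insight you are missing is that the exponent $\sqrt\mu$ appears not through $\cE$ itself but through this secondary function $Z$.
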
 
\begin{proof}
Recall $\bar{f} \eqdef \min_{\cH} f = f(x^\star)$. Define $g(t) \eqdef  e(t)+\beta \dot{e}(t)$, so that the constitutive equation is written in the compact form 
\begin{equation}\label{dyn-sc-b}
\ddot{x}(t) + 2\sqrt{\mu} \dot{x}(t) + \beta \nabla^2 f (x(t))\dot{x}(t) + \nabla f (x(t)) + g(t)=0.
\end{equation}
Derivation of $\cE (\cdot) $ gives 
\begin{eqnarray*}
\dot{\cE}(t) 
&=& \dotp{\nabla f (x(t))}{\dot{x}(t)} + \dotp{v(t)}{\dot{v}(t)}\\
&=& \dotp{\nabla f (x(t))}{\dot{x}(t)} + \dotp{v(t)}{\sqrt{\mu}\dot{x}(t) +  \ddot{x}(t) + \beta \nabla^2 f (x(t))\dot{x}(t)}.
\end{eqnarray*}
Using the definition of $v(t)$ and \eqref{dyn-sc-b}, we get
\begin{multline*}
\dot{\cE} (t)=\dotp{\nabla f (x(t))}{\dot{x}(t)}  + \dotp{\sqrt{\mu} (x(t) -x^\star) + \dot{x}(t)+ \beta \nabla f (x(t))}{-\sqrt{\mu}\dot{x}(t)- \nabla f (x(t))} - \dotp{v(t)}{g(t)} .
\end{multline*}
After developing and simplifying, we obtain
\begin{eqnarray*}
&\dot{\cE}(t) + \sqrt{\mu}\dotp{\nabla f (x(t))}{x(t)-x^\star} + \mu\dotp{x(t)-x^\star}{\dot{x}(t)} + \sqrt{\mu} \| \dot{x}(t) \|^2  \\
&+ \beta\sqrt{\mu}\dotp{\nabla f (x(t))}{\dot{x}(t)} + \beta \|\nabla f (x(t))\|^2  = -\dotp{ v(t)}{g(t)}.
\end{eqnarray*}
According to strong convexity of $f$, we have
\[
\dotp{\nabla f(x(t))}{x(t) - x^\star}  \geq f(x(t))- \bar{f} + \frac{\mu}{2} \|x(t) -x^\star\|^2 .
\]
Thus, by combining the last two relations, and by the Cauchy-Schwarz inequality, we obtain 
\[
\dot{\cE} (t) + \sqrt{\mu}A(t) \leq \| v(t)\| \| g(t) \| ,
\]
where 
\begin{equation*}
A(t) \eqdef f(x(t))- \bar{f} + \frac{\mu}{2} \|x(t) - x^\star\|^2  + \sqrt{\mu} \dotp{ x(t) -x^\star }{\dot{x}(t)} +  \| \dot{x}(t) \|^2  \\
+ \beta \dotp{  \nabla f (x(t))}{ \dot{x}(t)       } + \frac{\beta}{\sqrt{\mu}} \| \nabla f (x(t)) \|^2  .
\end{equation*}
Let us make appear $\cE(t)$ in $A(t)$,
\begin{multline*}
A(t)=\cE  (t)   - \frac{1}{2} \|  \dot{x}(t)+ \beta \nabla f (x(t)) \|^2   -  \sqrt{\mu} \dotp{ x(t) -x^\star }{\dot{x}(t) + \beta \nabla f (x(t))} + \sqrt{\mu} \dotp{ x(t) -x^\star }{\dot{x}(t)}   \\
+ \| \dot{x}(t) \|^2 + \beta \dotp{\nabla f (x(t))}{\dot{x}(t)} + \frac{\beta}{\sqrt{\mu}} \| \nabla f (x(t)) \|^2 . 
\end{multline*}
After developing and simplifying, we obtain
\begin{equation*}
\dot{\cE} (t) + \sqrt{\mu}\pa{\cE (t)      
+ \frac{1}{2} \|  \dot{x}(t) \|^2  + \pa{\frac{\beta}{\sqrt{\mu}}-\frac{\beta^2}{2}} \| \nabla f (x(t)) \|^2      -  \beta\sqrt{\mu} \dotp{ x(t) -x^\star }{ \nabla f (x(t))}} 
\leq  \| v(t)\| \| g(t) \| .
\end{equation*}
Since $ 0 \leq \beta \leq \frac{1}{\sqrt{\mu}}$, it holds that
$\frac{\beta}{\sqrt{\mu}}-\frac{\beta^2}{2} \geq \frac{\beta}{2\sqrt{\mu}}$. Hence
\begin{equation*}
\dot{\cE} (t) + \sqrt{\mu}\pa{\cE (t)      
+ \frac{1}{2} \|  \dot{x}(t) \|^2  + \frac{\beta}{2\sqrt{\mu}}\| \nabla f (x(t)) \|^2      -  \beta\sqrt{\mu} \dotp{ x(t) -x^\star }{ \nabla f (x(t))}} 
\leq  \| v(t)\| \| g(t) \| .
\end{equation*}
Let us use again the strong convexity of $f$ to write
\[
\cE (t) = \frac{1}{2}\cE (t) + \frac{1}{2}\cE (t) \geq \frac{1}{2}\cE (t)+ \frac{1}{2} \pa{f(x(t))- \bar{f}} \geq 
\frac{1}{2}\cE (t) +  \frac{\mu}{4} \| x(t) -x^\star \|^2 .
\]
By combining the two inequalities above, we obtain
\[
\dot{\cE} (t) + \frac{\sqrt{\mu}}{2}\cE (t) + \frac{\sqrt{\mu}}{2}\|  \dot{x} (t)\|^2  +  \sqrt{\mu} B(t)  \leq \| v(t)\| \| g(t) \|,
\]
where $B(t) = \frac{\mu}{4} \| x(t) -x^\star \|^2 + \frac{\beta}{2\sqrt{\mu}} \|\nabla f(x(t))\|^2      - \beta\sqrt{\mu} \|x(t) -x^\star\|  \|\nabla f(x(t))  \| $.
Set $X=\| x -x^\star\|$, $Y=  \| \nabla f (x) \|$. Elementary algebraic computation gives that, under the condition $ 0 \leq \beta \leq \frac{1}{2\sqrt{\mu}}$
\[
\frac{\mu}{4} X^2 + \frac{\beta}{2\sqrt{\mu}} Y^2 -  \beta\sqrt{\mu}XY \geq 0.
\]
Hence for $ 0 \leq \beta \leq \frac{1}{2\sqrt{\mu}}$
\begin{equation}\label{basic_diff_ineq_1}
\dot{\cE} (t) + \frac{\sqrt{\mu}}{2}\cE (t) + \frac{\sqrt{\mu}}{2}\|  \dot{x} (t)\|^2  \leq \| v(t)\| \| g(t) \|.
\end{equation}
\begin{enumerate}[label={\rm (\roman*)},leftmargin=3ex]
\item From \eqref{basic_diff_ineq_1}, we  first deduce that
\[
\dot{\cE} (t)  \leq \| v(t)\| \| g(t) \|,
\]
which by integration gives
\[
\cE (t)  \leq \cE(t_0) + \int_{t_0}^t \| v(\tau)\| \| g(\tau) \|d\tau.
\]
By definition of $\cE(t)$, we have $\cE(t) \geq \demi \|v(t)\|^2 $,  which gives
\[
\demi \|v(t)\|^2  \leq \cE(t_0) + \int_{t_0}^t \| v(\tau)\| \| g(\tau) \|d\tau.
\]
According to Lemma~\ref{lem:BrezisA5}, we obtain
\[
\|v(t)\|  \leq  \sqrt{2 \cE(t_0)} + \int_{t_0}^t  \| g(\tau) \|d\tau.
\]
Set $M \eqdef \sqrt{2 \cE(t_0)} + \int_{t_0}^{+\infty}  \| g(\tau) \|d\tau$. By assumption, $\DS{\int_{t_0}^{+\infty}  \| g(\tau) \|d\tau <+\infty}$, and thus $\sup_{t \geq t_0} \|v(t)\|  \leq  M < +\infty$.
Returning to \eqref{basic_diff_ineq_1} we deduce that
\begin{equation}\label{basic_diff_ineq_2}
\dot{\cE} (t) + \frac{\sqrt{\mu}}{2}\cE (t) + \frac{\sqrt{\mu}}{2}\|  \dot{x} (t)\|^2\leq M  \| g(t) \|.
\end{equation}
Therefore
\begin{equation}\label{basic_diff_ineq_3}
\dot{\cE} (t) + \frac{\sqrt{\mu}}{2}\cE (t) \leq M  \| g(t) \|.
\end{equation}
By integrating the differential inequality above, we obtain
\begin{equation}\label{basic_diff_ineq_4}
\cE (t)   \leq \cE (t_0) e^{-\frac{\sqrt{\mu}}{2}(t-t_0)}  + M e^{-\frac{\sqrt{\mu}}{2}t}\int_{t_0}^t  e^{\frac{\sqrt{\mu}}{2}\tau} \| g(\tau) \| d\tau.
\end{equation}
We now use Lemma~\ref{basic-int}, which is the continuous version of Kronecker's Theorem for series, with $f(t)= \|g(t)\| $ and $\varphi(t)= e^{\frac{\sqrt{\mu}}{2}t}$. By assumption we have $ \int_{t_0}^{+\infty}  \| g(\tau) \|d\tau <+\infty$.
We deduce that
\[
\lim_{t\to +\infty}  \frac{1}{e^{\frac{\sqrt{\mu}}{2}t}}\int_{t_0}^t  e^{\frac{\sqrt{\mu}}{2}\tau} \| g(\tau) \| d\tau =0.
\]
Therefore, from \eqref{basic_diff_ineq_4} we obtain
\[
\lim_{t\to +\infty} \cE (t) =0.
\]
By definition of $\cE (t)$ this implies
\begin{eqnarray}
&&\lim_{t\to +\infty}  f(x(t))-  \min_{\cH} f =0  \label{conv1} \\
&&\lim_{t\to +\infty} \| \sqrt{\mu} (x(t) -x^\star) + \dot{x}(t)+ \beta \nabla f (x(t)) \|=0. \label{conv2}
\end{eqnarray}
Acoording to \eqref{conv1} and the strong convexity of $f$ we deduce that
\[
\lim_{t\to +\infty}\|x(t) -x^\star\|=0
\]
By continuity of $\nabla f $, and since $\nabla f (x^\star)=0$, we deduce that
\[
\lim_{t\to +\infty}\| \nabla f (x(t))\|=0.
\]
Combining the above results with \eqref{conv2}, we deduce that
\[
 \lim_{t\to +\infty}\| \dot{x}(t)\|=0.
\]
 
\item Let us make precise the argument developed above, and assume that, as $t \to +\infty$
\[
\| g(t) \| = \cO \pa{   \frac{1}{t^p} },
\]
where $p>0$. Then, from  \eqref{basic_diff_ineq_4} we get 
\begin{eqnarray*}
\cE (t)   &\leq& \cE (t_0) e^{-\frac{\sqrt{\mu}}{2}(t-t_0)}  + M e^{-\frac{\sqrt{\mu}}{2}t}\Big(  \int_{t_0}^{\frac{t}{2}} e^{\frac{\sqrt{\mu}}{2}\tau} \| g(\tau) \| d\tau 
+  \int_{\frac{t}{2}}^t  e^{\frac{\sqrt{\mu}}{2}\tau} \| g(\tau) \| d\tau \Big)\\
&\leq& \cE (t_0) e^{-\frac{\sqrt{\mu}}{2}(t-t_0)}  + M e^{-\frac{\sqrt{\mu}}{2}t}\Big(  C_1  e^{\frac{\sqrt{\mu}t}{4}} 
+  \int_{\frac{t}{2}}^t  e^{\frac{\sqrt{\mu}}{2}\tau}  \frac{C_2}{\tau^p}  d\tau \Big)\\
&\leq& \cE (t_0) e^{-\frac{\sqrt{\mu}}{2}(t-t_0)}  + M e^{-\frac{\sqrt{\mu}}{2}t}\Big(  C_1  e^{\frac{\sqrt{\mu}t}{4}} 
+   \frac{C_2}{t^p}  e^{\frac{\sqrt{\mu}}{2}t} \Big)\\
&\leq & \cE (t_0) e^{-\frac{\sqrt{\mu}}{2}(t-t_0)}  + M\Big( C_1  e^{-\frac{\sqrt{\mu}t}{4}} 
+   \frac{C_2}{t^p} \Big) \\
&=&  \cO\pa{\frac{1}{t^p} }.
\end{eqnarray*}
By definition of $\cE (t)$ and strong convexity of $f$, we infer
\[
\frac{\mu}{2}\norm{x(t)-x^\star}^2 \leq f(x(t)) -  \min f(\cH) = \cO\pa{\frac{1}{t^p} } \qandq
\|\sqrt{\mu} (x(t) -x^\star) + \dot{x}(t)+ \beta \nabla f (x(t)) \|^2 = \cO\pa{\frac{1}{t^p} } .
\]
Developing the left-hand side of the last expression, we obtain
\begin{multline*}
\mu \| x(t) -x^\star\|^2 + \|  \dot{x}(t)\|^2 + \beta^2  \|\nabla f (x(t))\|^2 + 2\beta \sqrt{\mu}\dotp{ x(t) -x^\star}{\nabla f (x(t))} \\
 + \dotp{\dot{x}(t)}{ 2 \beta\nabla f (x(t)) +  2\sqrt{\mu}  ( x(t) -x^\star)}  \leq \frac{C}{t^p}.
\end{multline*}
By convexity of $f$, we have $\dotp{x(t) -x^\star}{\nabla f (x(t))} \geq f(x(t)) - \bar{f}$. Moreover,
\begin{equation*}
\dotp{\dot{x}(t)}{ 2 \beta\nabla f (x(t)) +  2\sqrt{\mu}  ( x(t) -x^\star)}
= \frac{d}{dt} \pa{2 \beta (f(x(t)) - \bar{f}    )+ \sqrt{\mu} \| x(t) -x^\star\|^2  }.  
\end{equation*}
Combining the above results, we obtain
\begin{equation*}
\sqrt{\mu} \pa{2 \beta (f(x(t)) - \bar{f}) +\sqrt{\mu}  \| x(t) -x^\star\|^2} + \beta^2  \|  \nabla f (x(t))\|^2     
 +   \frac{d}{dt} \pa{2 \beta (f(x(t)) - \bar{f})+ \sqrt{\mu} \| x(t) -x^\star\|^2  } \leq \frac{C}{t^p}.
\end{equation*}
Set $Z(t)\eqdef 2 \beta (f(x(t)) - \bar{f}) +\sqrt{\mu}  \| x(t) -x^\star\|^2$. We have
\[
\frac{d}{dt} Z(t) + \sqrt{\mu} Z(t) + \beta^2  \|  \nabla f (x(t))\|^2 
 \leq \frac{C}{t^p}.
\]
By integrating this differential inequality, elementary computation gives 
\[
e^{- \sqrt{\mu}t} \int_{t_0}^t  e^{ \sqrt{\mu}s}\|  \nabla f (x(s))\|^2 ds \leq \frac{C}{t^p}.
\]
This completes the proof. \qed
\end{enumerate}
\end{proof}

\subsection{Implicit Hessian Damping}\label{s:approxsconv}
We now turn to the implicit Hessian system, and take in the Polyak heavy ball system  a fixed positive damping coefficient which is adjusted to the modulus of strong convexity of $f$. This gives the system
\begin{equation}\label{dyn-sc-implicit}
\ddot{x}(t) + 2\sqrt{\mu} \dot{x}(t) +  \nabla f \pa{x(t)+ \beta\dot{x}(t)} + e(t)=0.
\end{equation}
To analyze \eqref{dyn-sc-implicit}, we define the function $\cE : [t_0, +\infty[ 	\to \R_+ $
\begin{align}
t 	\mapsto \cE (t) \eqdef f\pa{x(t)+ \beta\dot{x}(t))} -  \min_{\cH} f  + \frac{1}{2} \| \sqrt{\mu} (x(t) -x^\star) + \dot{x}(t)\|^2.
\end{align}

\begin{theorem}\label{strong-conv-thm-implicit}
Suppose that $f: \cH \to \R$ is $\mu$-strongly convex for some $\mu >0$, and let $x^\star$ be the unique minimizer of $f$. Let $x(\cdot): [t_0, + \infty[ \to \cH$ be a solution trajectory of \eqref{dyn-sc-implicit}.
Suppose that 
\begin{enumerate}[label=\alph*)]
\item $\DS{ 0 \leq \beta \leq \frac{1}{2\sqrt{\mu}}}$. 
\item$\DS{\int_{t_0}^{+\infty} \|e(t)\| dt< +\infty}$.
\end{enumerate}
{Then the following properties are satisfied:} 
\begin{enumerate}[label={\rm (\roman*)}]
\item Minimizing properties: there exists a positive constant $M$ such that for all $t\geq t_0$
\[
\cE (t)   \leq \cE (t_0) e^{-\frac{\sqrt{\mu}}{2}(t-t_0)}  + M e^{-\frac{\sqrt{\mu}}{2}t}\int_{t_0}^t  e^{\frac{\sqrt{\mu}}{2}\tau} \| e(\tau) \| d\tau.
\]
More precisely,
\[
M \eqdef  \sqrt{\frac{\cE(t_0)}{c}} +  \frac{1}{2c}\int_{t_0}^{+\infty} \| e(\tau) \|d\tau  \; \mbox{ with } \; c = \frac{\min\{\mu,1\}}{4 \max\{\beta^2L^2,1\}}
\]
and $L$ is the Lipschitz constant of $\nabla f$. Consequently,
\begin{eqnarray*}
&& \lim_{t\to +\infty} \cE (t) =0; \; \lim_{t\to +\infty}  f(x(t)) =  \min_{\cH} f \\
&& \lim_{t\to +\infty}\|  x(t) - x^\star\|=
 \lim_{t\to +\infty}\| \nabla f (x(t))\|=
 \lim_{t\to +\infty}\| \dot{x}(t)\|=0.
\end{eqnarray*}

\item Convergence rates: suppose moreover that for some $p>0$, 
$
\DS{\|e(t) \| = \cO\pa{   \frac{1}{t^p}}},
$ as $t \to +\infty$.
Then
$
\cE (t)= \cO \pa{   \frac{1}{t^p} },
$
\ie $\cE (t)$ inherits the decay rate of the error terms. In turn, as $t \to +\infty$
\begin{eqnarray*}
&& f\pa{x(t)} - \min_{\cH} f = \cO\pa{\frac{1}{t^p} };\\
&& \|x(t) -x^\star\|^2= \cO\pa{\frac{1}{t^p} }; \; 
\| \dot{x}(t)\|^2= \cO\pa{\frac{1}{t^p} };  \; \|\nabla f(x(t))\|^2 = \cO \pa{\frac{1}{t^p}}.
\end{eqnarray*}
\end{enumerate}
\end{theorem}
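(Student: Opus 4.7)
The plan is to mirror the structure of the proof of Theorem~\ref{strong-conv-thm}, with the essential modification that the convexity inequality must now be applied at the shifted argument $y(t) \eqdef x(t)+\beta\dot{x}(t)$ in which $\nabla f$ is evaluated. Writing $w(t) \eqdef \sqrt{\mu}(x(t)-x^\star)+\dot{x}(t)$, so that $\cE(t) = (f(y(t)) - \bar f) + \demi\|w(t)\|^2$, I would first differentiate $\cE$ along a trajectory of \eqref{dyn-sc-implicit}. Using $\ddot{x}(t) = -2\sqrt{\mu}\dot{x}(t) - \nabla f(y(t)) - e(t)$, a direct computation yields
\[
\dot{\cE}(t) = \dotp{\nabla f(y(t))}{(1-2\sqrt{\mu}\beta)\dot{x}(t) - \beta\nabla f(y(t))} + \dotp{w(t)}{-\sqrt{\mu}\dot{x}(t)-\nabla f(y(t))} - \dotp{w(t)+\beta\nabla f(y(t))}{e(t)}.
\]

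Next I would use strong convexity applied at $y(t)$, namely $\dotp{\nabla f(y(t))}{y(t)-x^\star} \geq f(y(t))-\bar f + \frac{\mu}{2}\|y(t)-x^\star\|^2$, together with the identity $x(t)-x^\star = y(t)-x^\star - \beta\dot{x}(t)$, to rewrite the $\dotp{w(t)}{\nabla f(y(t))}$ term and produce the dissipation. The cross-terms $\dotp{\nabla f(y(t))}{\dot{x}(t)}$ are then absorbed by completing a square using the hypothesis $0\leq \beta \leq \tfrac{1}{2\sqrt{\mu}}$, exactly in the spirit of the $\beta$-bound manipulation at the end of the proof of Theorem~\ref{strong-conv-thm}. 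The goal is the differential inequality
\[
\dot{\cE}(t) + \frac{\sqrt{\mu}}{2}\cE(t) \leq \bnorm{w(t)+\beta\nabla f(y(t))} \, \|e(t)\|,
\]
with possibly additional nonnegative terms on the left which I would discard.

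The main obstacle, compared to the explicit case, is that the error coupling term on the right involves $\beta\nabla f(y(t))$, which does not belong to the Lyapunov function. Here I would use Lipschitz continuity of $\nabla f$ together with strong convexity to control it by $\sqrt{\cE(t)}$. Indeed, $\|\nabla f(y(t))\|^2 \leq L^2\|y(t)-x^\star\|^2 \leq \frac{2L^2}{\mu}(f(y(t))-\bar f)\leq \frac{2L^2}{\mu}\cE(t)$, while $\|w(t)\|^2\leq 2\cE(t)$. Combining these with the elementary inequality $(a+b)^2\leq 2(a^2+b^2)$ produces a bound of the form $\|w(t)+\beta\nabla f(y(t))\|^2\leq \tfrac{1}{c}\cE(t)$ with exactly the constant $c = \min\{\mu,1\}/(4\max\{\beta^2L^2,1\})$ stated in the theorem. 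Substituting back gives $\dot{\cE}(t) + \tfrac{\sqrt{\mu}}{2}\cE(t) \leq \tfrac{1}{\sqrt{c}}\sqrt{\cE(t)}\|e(t)\|$; discarding the dissipation and applying the Gronwall Lemma~\ref{lem:BrezisA5} to $\demi(\sqrt{\cE})^2$ yields $\sqrt{\cE(t)} \leq \sqrt{\cE(t_0)} + \tfrac{1}{2\sqrt{c}}\int_{t_0}^{+\infty}\|e(\tau)\|d\tau$, hence $\|w(t)+\beta\nabla f(y(t))\| \leq M$ uniformly in $t$ with $M$ as announced.

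Reinjecting this uniform bound in the previous inequality gives $\dot{\cE}(t) + \tfrac{\sqrt{\mu}}{2}\cE(t) \leq M\|e(t)\|$, which integrates to the exponential bound of claim~(i). To conclude claim~(i), I would then invoke Lemma~\ref{basic-int} (the continuous Kronecker lemma) with $\varphi(t) = e^{\sqrt{\mu}t/2}$ and $f(t) = \|e(t)\|$ to deduce $\cE(t)\to 0$; the consequences $f(x(t))\to \bar f$, $\|x(t)-x^\star\|\to 0$, $\|\dot{x}(t)\|\to 0$, and $\|\nabla f(x(t))\|\to 0$ follow from strong convexity, continuity of $\nabla f$, and the definition of $\cE$ (with Lipschitz continuity used to compare $\nabla f(x(t))$ and $\nabla f(y(t))$). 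For claim~(ii), assuming $\|e(t)\|=\cO(t^{-p})$, I would estimate $e^{-\sqrt{\mu}t/2}\int_{t_0}^t e^{\sqrt{\mu}\tau/2}\|e(\tau)\|d\tau$ by splitting the integral at $t/2$, exactly as in the end of the proof of Theorem~\ref{strong-conv-thm}, to obtain $\cE(t)=\cO(t^{-p})$. The individual rates on $f(x(t))-\bar f$, $\|x(t)-x^\star\|^2$, $\|\dot{x}(t)\|^2$, and $\|\nabla f(x(t))\|^2$ are then extracted from $\cE(t)$ using strong convexity of $f$, Lipschitz continuity of $\nabla f$, the identity $\|w(t)\|^2\leq 2\cE(t)$, and once again the relation $x(t)=y(t)-\beta\dot{x}(t)$ to pass between $\nabla f(x(t))$ and $\nabla f(y(t))$.
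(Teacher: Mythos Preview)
Your proposal is correct and follows essentially the same route as the paper: differentiate $\cE$, apply strong convexity at the shifted point $y(t)=x(t)+\beta\dot x(t)$, verify under $\beta\le \tfrac{1}{2\sqrt{\mu}}$ that the residual terms are nonnegative to reach $\dot\cE+\tfrac{\sqrt{\mu}}{2}\cE\le \|v+\beta\nabla f(y)\|\,\|e\|$, then use Lipschitz continuity plus strong convexity to dominate $\|v+\beta\nabla f(y)\|^2$ by $\cE/c$, apply Gronwall, reinject, and finish (i) with Lemma~\ref{basic-int} and (ii) by splitting the integral at $t/2$. The only place to be more careful is the absorption of the cross-terms: in the paper this is not a single square-completion but a short quadratic sign analysis (Young's inequality on $\beta\|\nabla f(y)\|^2+\beta\sqrt{\mu}\langle\nabla f(y),\dot x\rangle$ followed by checking that the remaining quadratic form in $\|y-x^\star\|$ and $\|\dot x\|$ is positive semidefinite precisely when $\beta\sqrt{\mu}\le\tfrac12$), and your bound on $\|v+\beta\nabla f(y)\|^2$ does not literally produce the stated constant $c$ without going through $\cE\ge\tfrac{\mu}{2}\|y-x^\star\|^2+\tfrac12\|v\|^2$ as the paper does---but neither point affects the argument.
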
 
\begin{proof}
Let us define
\begin{equation}\label{dyn-sc-d-implicit} 
v(t)= \sqrt{\mu} (x(t) -x^\star) + \dot{x}(t).
\end{equation}
and thus, $\cE$ equivalently reads
\begin{equation}\label{dyn-sc-c-implicit}
\cE (t) = f\pa{x(t)+ \beta\dot{x}(t)} -  \min_{\cH} f + \frac{1}{2} \| v(t) \|^2 .
\end{equation}
Taking the derivative in time of $\cE (\cdot) $ gives 
\begin{align*}
\dot{\cE} (t)&=\dotp{\nabla f\pa{x(t)+ \beta\dot{x}(t)}}{\dot{x}(t) + \beta  \ddot{x}(t)}  + \dotp{ v(t)}{\dot{v}(t)}\\
&= \dotp{\nabla f\pa{x(t)+ \beta\dot{x}(t)}}{ \dot{x}(t)  + \beta  \ddot{x}(t)}  + \dotp{ \sqrt{\mu} (x(t) -x^\star) + \dot{x}(t)}{\sqrt{\mu}  \dot{x}(t)  +  \ddot{x}(t)}.
\end{align*}
Using  the constitutive equation \eqref{dyn-sc-b}, we get
\begin{multline*}
\dot{\cE} (t) = \dotp{  \nabla f\pa{x(t)+ \beta\dot{x}(t)}}{ (1-2\beta \sqrt{\mu})\dot{x}(t) - \beta   \nabla f\pa{x(t)+ \beta\dot{x}(t)}-\beta e(t)   }  \\
+ \dotp{ \sqrt{\mu} (x(t) -x^\star) + \dot{x}(t)}{-\sqrt{\mu}  \dot{x}(t)  -  \nabla f\pa{x(t)+ \beta\dot{x}(t)} -e(t)  }.
\end{multline*}
After developing and simplifying, we obtain
\begin{multline*}
\dot{\cE} (t) +2\beta \sqrt{\mu}\dotp{  \nabla f\pa{x(t)+ \beta\dot{x}(t)}}{ \dot{x}(t)}  + \sqrt{\mu}\dotp{  \nabla f\pa{x(t)+ \beta\dot{x}(t)}}{x(t) -x^\star }  + \beta \|  \nabla f\pa{x(t)+ \beta\dot{x}(t)}\|^2  \\
+ \sqrt{\mu} \| \dot{x}(t) \|^2 + \mu \dotp{ x(t) -x^\star }{ \dot{x}(t)} = -\dotp{ \sqrt{\mu} (x(t) -x^\star) + \dot{x}(t)
+\beta\nabla f\pa{x(t)+ \beta\dot{x}(t)}}{e(t)}.
\end{multline*} 
In view of strong convexity of $f$, we have
\begin{multline*}
\dotp{\nabla f\pa{x(t)+ \beta\dot{x}(t)}}{ x(t) -x^\star}  = \dotp{  \nabla f\pa{x(t)+ \beta\dot{x}(t)}}{x(t)+\beta\dot{x}(t) - x^\star} - \dotp{\nabla f\pa{x(t) + \beta\dot{x}(t)}}{ \beta\dot{x}(t)} \\
\geq f\pa{x(t)+ \beta\dot{x}(t)}- \bar{f} + \frac{\mu}{2} \| x(t) -x^\star + \beta\dot{x}(t) \|^2 - \dotp{\nabla f\pa{x(t)+ \beta\dot{x}(t)}}{\beta\dot{x}(t)} .
\end{multline*} 
Thus, by combining the last two relations, we obtain
\begin{multline}
\dot{\cE} (t) +\beta \sqrt{\mu}\dotp{  \nabla f\pa{x(t)+ \beta\dot{x}(t)}}{\dot{x}(t)}  + \sqrt{\mu}\pa{f\pa{x(t)+ \beta\dot{x}(t)}- \bar{f} + \frac{\mu}{2} \| x(t) -x^\star  +\beta\dot{x}(t) \|^2}  \\
+ \beta \|\nabla f\pa{x(t)+ \beta\dot{x}(t)}\|^2  + \sqrt{\mu} \| \dot{x}(t) \|^2 + \mu \dotp{ x(t) -x^\star }{\dot{x}(t)}  \leq \| w(t) \| \|e(t)\|, \label{Lyap_22}
\end{multline} 
where we have used Cauchy-Schwarz inequality, and we set
\[
w(t) \eqdef \sqrt{\mu} (x(t) -x^\star) + \dot{x}(t)
+\beta\nabla f\pa{x(t)+ \beta\dot{x}(t)}.
\] 
Let us make $\cE (t)$ appear on the left-hand side of \eqref{Lyap_22}. We get
\[
\dot{\cE} (t) + \sqrt{\mu}\cE (t) + B(t) \leq \| w(t)\| \| e(t) \|
\]
where 
\begin{equation*}
B(t) \eqdef \beta \| \nabla f\pa{x(t)+ \beta\dot{x}(t)}\|^2
+ \frac{\sqrt{\mu}}{2} (\beta^2 \mu +1)\| \dot{x}(t) \|^2+ \beta \sqrt{\mu}\dotp{\nabla f\pa{x(t)+ \beta\dot{x}(t)}}{\dot{x}(t)} \\
+ \beta \mu \sqrt{\mu} \dotp{ x(t) -x^\star }{ \dot{x}(t)}  .
\end{equation*}
Let us use again the strong convexity of $f$ to write
\[
\cE (t) = \frac{1}{2}\cE (t) + \frac{1}{2}\cE (t)  \geq \frac{1}{2}\cE (t)+ \frac{1}{2} \pa{f(x(t)+ \beta\dot{x}(t))- \bar{f}} \geq 
\frac{1}{2}\cE (t) +  \frac{\mu}{4} \| x(t) -x^\star  + \beta\dot{x}(t)\|^2 .
\]
By combining the inequalities above, we obtain
\[
\dot{\cE} (t) + \frac{\sqrt{\mu}}{2}\cE (t)+   C(t)  \leq \| w(t)\| \| e(t) \|,
\]
where
\begin{multline*}
C(t) \eqdef 
\beta \|  \nabla f  (y(t) \|^2
+ \beta \sqrt{\mu}\dotp{\nabla f(y(t))}{\dot{x}(t)} +  \frac{\sqrt{\mu}}{2} (\beta^2 \mu +1)\| \dot{x}(t) \|^2 + \beta \mu \sqrt{\mu} \dotp{x(t) -x^\star }{\dot{x}(t)} \\
+\frac{\mu\sqrt{\mu}}{4} \| x(t) -x^\star + \beta\dot{x}(t)\|^2 ,
\end{multline*}
and we set $y(t) \eqdef x(t)+ \beta\dot{x}(t) $.
Let us show that, for an adequate choice of the parameters, $C(t)$ is non-negative. Let us reformulate $C(t)$ as follows:
Young's inequality gives the following minorization for the two first terms of $C(t)$
\[
\beta \|  \nabla f  (y(t)) \|^2
+ \beta \sqrt{\mu}\langle  \nabla f  (y(t)),  \dot{x}(t)       \rangle \geq -\frac{1}{4}\beta \mu \| \dot{x}(t) \|^2 .
\]
By using this inequality in $C(t)$, and after simplification, we arrive at 
\begin{align*}
&C(t)\geq   
\pa{\frac{\sqrt{\mu}}{2} (\beta^2 \mu +1)-\frac{1}{4}\beta \mu}\| \dot{x}(t) \|^2 + \beta \mu \sqrt{\mu} \dotp{ x(t) -x^\star }{\dot{x}(t)} 
+\frac{\mu\sqrt{\mu}}{4} \| x(t) -x^\star + \beta\dot{x}(t)\|^2 \\
&=  \frac{\mu\sqrt{\mu}}{4} \| x(t) -x^\star + \beta\dot{x}(t)\|^2 + \pa{\frac{\sqrt{\mu}}{2} (\beta^2 \mu +1)-\frac{1}{4}\beta \mu 
- \beta^2 \mu \sqrt{\mu}}\| \dot{x}(t) \|^2 + \beta \mu \sqrt{\mu} \dotp{ x(t) -x^\star + \beta\dot{x}(t)}{ \dot{x}(t)     } \\
&=  \frac{\mu\sqrt{\mu}}{4} \| x(t) -x^\star + \beta\dot{x}(t)\|^2 + \sqrt{\mu}\pa{-\frac{\beta^2 \mu }{2} -\frac{1}{4}\beta \sqrt{\mu}
+ \frac{1}{2}}\| \dot{x}(t) \|^2 + \beta \mu \sqrt{\mu} \dotp{ x(t) -x^\star + \beta\dot{x}(t)}{ \dot{x}(t)}.
\end{align*}
Elementary algebra gives that 
$
-\frac{\beta^2 \mu }{2} -\frac{1}{4}\beta \sqrt{\mu} + \frac{1}{2} \geq 0
$
if and only if 
$
\beta \sqrt{\mu} \leq \frac{\sqrt{17}-1}{4}.
$
According to the classical rule for the sign of a quadratic function of a real variable, we get that $C(t) \geq 0$ under the condition
\[
(\beta \mu \sqrt{\mu})^2 \leq \mu^2\pa{-\frac{\beta^2 \mu }{2} -\frac{1}{4}\beta \sqrt{\mu}
+ \frac{1}{2}}.
\]
Setting $Z= \beta  \sqrt{\mu}$, the latter inequality is equivalent to ensuring 
$$
\frac{3}{2} Z^2 + \frac{1}{4}Z - \demi \leq 0.
$$
which is satisfied for $0 \leq Z \leq \demi$, implying
$
\beta \leq \frac{1}{2 \sqrt{\mu}}.
$
Since $\demi < \frac{\sqrt{17}-1}{4}$, we get as a final condition 
\[
\beta \leq \frac{1}{2 \sqrt{\mu}}.
\]
Thus under this condition we get
\begin{equation}\label{basic-equ-implit-1}
\dot{\cE} (t) + \frac{\sqrt{\mu}}{2}\cE (t) \leq \| w(t)\| \| e(t) \|.
\end{equation}
From \eqref{basic-equ-implit-1}, we first deduce that
\[
\dot{\cE} (t)  \leq \| w(t)\| \| e(t) \|,
\]
which, after integration, gives
\[
\cE (t)  \leq \cE(t_0) + \int_{t_0}^t \| w(\tau)\| \| e(\tau) \| d\tau.
\]
By definition of $w$ we have
\begin{align*}
\| w(t)\| 
&\leq \| v(t)\| + \beta \| \nabla f\pa{x(t)+ \beta\dot{x}(t)}- \nabla f (x^\star) \| \\
&\leq \| v(t)\| + \beta L \|  x(t)- x^\star + \beta\dot{x}(t) \| ,
\end{align*}
where $L$ is the Lipschitz constant of $\nabla f$. On the other hand, strong convexity of $f$ entails
\[
\cE (t) \geq  \frac{\mu}{2} \|  x(t) -x^\star + \beta\dot{x}(t)  \|^2  + \demi  \| v(t)\|^2.
\]
Hence, there exists a positive constant $c$ such that\footnote{One can take $c = \frac{\min\{\mu,1\}}{4 \max\{\beta^2L^2,1\}}$.}
\[
\cE (t) \geq  c \|w(t)\|^2.
\]
This in turn gives
\[
c \|w(t)\|^2  \leq \cE(t_0) + \int_{t_0}^t \| w(\tau)\| \| e(\tau) \|d\tau.
\]
According to Lemma~\ref{lem:BrezisA5}, and $ \int_{t_0}^{+\infty}  \| e(\tau) \|d\tau <+\infty$, we deduce that 
\[
\sup_{t \geq t_0} \|w(t)\| \leq M \eqdef  \sqrt{\frac{\cE(t_0)}{c}} +  \frac{1}{2c}\int_{t_0}^{+\infty} \| e(\tau) \|d\tau < +\infty.
\]
Returning to \eqref{basic-equ-implit-1} we deduce that
\begin{equation}\label{basic_diff_ineq_3_implicit}
\dot{\cE} (t) + \frac{\sqrt{\mu}}{2}\cE (t)        \leq M  \| e(t) \|.
\end{equation}
By integrating the differential inequality above, we obtain
\begin{equation}\label{basic_diff_ineq_4_implicit}
\cE (t)   \leq \cE (t_0) e^{-\frac{\sqrt{\mu}}{2}(t-t_0)}  + M e^{-\frac{\sqrt{\mu}}{2}t}\int_{t_0}^t  e^{\frac{\sqrt{\mu}}{2}\tau} \| e(\tau) \| d\tau .
\end{equation}

\begin{enumerate}[label={\rm (\roman*)},leftmargin=3ex]
\item We first deduce from \eqref{basic_diff_ineq_4_implicit} that $\cE (t) $ tends to zero as $t\to +\infty$. This implies that
\begin{eqnarray}
&&\lim_{t\to +\infty}  f( x(t)+ \beta\dot{x}(t))) =  \min_{\cH} f , \label{conv10}\\
&& \lim_{t\to +\infty}  \| \sqrt{\mu} (x(t) -x^\star) + \dot{x}(t)\| =0 . \label{conv11}
\end{eqnarray}
From \eqref{conv10} and strong convexity of $f$ we deduce that
\begin{equation}
\lim_{t\to +\infty}  \| (x(t)-x^\star)+ \beta\dot{x}(t)\| =0 \label{conv12} .
\end{equation}
From \eqref{conv11} and \eqref{conv12}, and $\beta \neq \frac{1}{\sqrt{\mu}}$ (a consequence of the assumption $\beta \leq \frac{1}{2\sqrt{\mu}  }$), elementary algebra gives
\[
\lim_{t\to +\infty}  \| x(t)-x^\star\| = \lim_{t\to +\infty}  \| \dot{x}(t)\| =0.
\]
In turn, continuity of $f$ and $\nabla f$ imply 
\[
\lim_{t\to +\infty}  \|\nabla f(x(t))\| = 0 \qandq \lim_{t\to +\infty}  f(x(t)) = \min_{\cH} f .
\]

\item Let us now assume that, as $t \to +\infty$, we have
$
\| e(t) \| = \cO \pa{   \frac{1}{t^p} },
$
where $p>0$. Based on \eqref{basic_diff_ineq_4_implicit}, {a similar argument as in the explicit case} (see the proof of Theorem~\ref{strong-conv-thm}) gives
$
\cE (t) =  \cO\pa{\frac{1}{t^p}}.
$
By definition of $\cE (t)$, we infer that
\begin{equation}\label{eq:energy_6_02}
f(x(t)+ \beta\dot{x}(t)) - \min_{\cH} f = \cO\pa{\frac{1}{t^p}}
\end{equation}
and 
\begin{equation}\label{eq:energy_6_02_b}
\| \sqrt{\mu} (x(t) -x^\star) + \dot{x}(t) \|^2 = \cO\pa{\frac{1}{t^p}}.
\end{equation}
From \eqref{eq:energy_6_02} and strong convexity of $f$ we deduce that
\begin{eqnarray}\label{conv12_b}
\| (x(t)-x^\star)+ \beta\dot{x}(t)\|^2 = \cO\pa{\frac{1}{t^p}} .
\end{eqnarray}
Combining \eqref{eq:energy_6_02_b} and \eqref{conv12_b}, and recalling that $\beta \sqrt{\mu} \neq 1$ we immediately obtain 
\begin{equation}\label{eq:convratevel}
\|x(t)-x^\star\|^2 \leq  \frac{C}{t^p}  \qandq \| \dot{x}(t)\|^2 = \cO\pa{\frac{1}{t^p}}.
\end{equation}
According to the Lipschitz continuity of $\nabla f$, and 
$\nabla f (x^\star) =0  $ we deduce that 
\[
\|\nabla f(x(t))\|^2 \leq L^2 \| x(t)-x^\star\|^2 = \cO\pa{\frac{1}{t^p}} .
\]
Now, combining the descent lemma with \eqref{eq:energy_6_02}, \eqref{conv12_b} and \eqref{eq:convratevel}  shows that
\begin{align*}
f(x(t))- \min_{\cH} f
&\leq f(x(t)+\beta\dot{x}(t)) - \min_{\cH} f - \beta\dotp{\nabla f(x(t)+\beta\dot{x}(t))}{\dot{x}(t)} + \frac{L\beta^2}{2}\norm{\dot{x}(t)}^2 \\
&\leq f(x(t)+\beta\dot{x}(t))-\inf_{\cH} f + L\beta\norm{x(t)-x^\star+\beta\dot{x}(t)}\norm{\dot{x}(t)} + \frac{L\beta^2}{2}\norm{\dot{x}(t)}^2 \\
&= \cO\pa{\frac{1}{t^p}} ,
\end{align*}
which completes the proof. \qed
\end{enumerate}
\end{proof}

\begin{remark}
The results of Theorem~\ref{strong-conv-thm-implicit} appear new. Even for the unperturbed case of system \eqref{dyn-sc-implicit}, where $e \equiv 0$, we are not aware of any guarantees for these dynamics in the literature.
\end{remark}

\section{The Non-smooth Case}\label{s:nonsmooth}

\subsection{Explicit Hessian Damping}

In the sequel, we will show that most properties obtained in the smooth case still hold for the global strong solution of \eqref{eq:fos2} (and in particular, all properties that do not require $x(t)$ to be twice differentiable).

\subsubsection{Minimizing properties}
{From now on, we assume that, for all $T >t_0$}, \; $e(\cdot)\in \cW^{1,1} (t_0,T; \cH)$. Let $(x,y):[t_0,+\infty[\to\cH\times\cH$ be the global strong solution to \eqref{eq:fos2} with Cauchy data $(x(t_0),y(t_0))=(x_0,y_0)\in\dom(f)\times\cH$.
For $t\geq t_0$ define
\begin{equation} \label{u0}
u(t)=\int_{t_0}^t \pa{-\beta e(s)+\pa{\frac{1}{\beta}-\frac{\alpha}{s}}x(s)-\frac{1}{\beta}y(s)}ds.
\end{equation}
Thus $u$ is continuously differentiable, with derivative satisfying
\begin{align}
\dot u(t)
& = -\beta e(t) +\pa{\frac{1}{\beta}-\frac{\alpha}{t}}x(t)-\frac{1}{\beta}y(t), \quad \forall t\geq t_0, \label{guniter} \\
& = \dot x(t)+\beta \xi(t),\mbox{\hspace{6em}for almost all }t>t_0, \label{gubis}
\end{align}
where $\xi(t) \in \partial f(x(t))$, and the last equality follows from Theorem~\ref{Thm-existence}\ref{existence-6}-\ref{existence-6:b}. Therefore, $u$ can be also written equivalently as
\[
u(t)=x(t)-x_0+ \beta \DS{ \int_{t_0}^t \xi(s)  ds}.
\]
With parts {\ref{existence-1} }and {\ref{existence-2}} of Theorem~\ref{Thm-existence}, equality \eqref{guniter} shows that $\dot u$ is absolutely continuous on 
any compact subinterval of $[t_0,+\infty[$, hence differentiable almost everywhere on $[t_0,+\infty[$. Therefore, 
\[
\ddot u(t)= -\beta \dot{e}(t)+\frac{\alpha}{t^2}x(t)+\pa{\frac{1}{\beta}-\frac{\alpha}{t}}\dot x(t) -\frac{1}{\beta}\dot y(t).
\]
The equality above, combined with 
$\dot y(t)=\frac{\alpha\beta}{t^2} x(t) +\dot x(t)+\beta(\xi(t) +e(t))$ (which is obtained by taking the difference of the two equations in \eqref{eq:fos2}), yields 
\begin{equation} \label{dguniter0}
\ddot u(t)=-\frac{\alpha}{t}\dot x(t)-\xi(t) -(e(t)+ \beta \dot{e}(t)),
\end{equation}
for almost all $t>t_0$. Using \eqref{gubis}, we obtain
\begin{align} 
\ddot u(t)
&=\pa{\frac{1}{\beta}-\frac{\alpha}{t}}\dot x(t)-\frac{1}{\beta}\dot u(t)  -(e(t)+ \beta \dot{e}(t))
\label{dguniter} 
\end{align}
for almost all $t>t_0$. We will need the following energy function of the system, defined for all 
$T\geq t\geq t_0$ (recall \eqref{guniter} for the definition of $\dot u(t)$):
\begin{equation}\label{gE:W}
W_T(t) \eqdef \frac{1}{2}\|\dot u(t)\|^2 + f (x(t)) - \int_t^T \langle \dot{u}(\tau),e(\tau) +\beta \dot{e}(\tau)\rangle d\tau,
\end{equation}
and when the following expression is well-defined (we will prove it later)
\begin{equation}\label{gE:W_b}
W(t) \eqdef \frac{1}{2}\|\dot u(t)\|^2 + f (x(t)) - \int_t^{+\infty} \langle \dot{u}(\tau),e(\tau) +\beta \dot{e}(\tau)\rangle d\tau.
\end{equation}

\begin{theorem} \label{Thm-g-weak-conv2}
{Let $\alpha>0$. Suppose that  $\inf_{\cH} f > -\infty$. Suppose that $e(\cdot)\in \cW^{1,1} (t_0,T; \cH)$ for all $T > t_0$, with
$\DS{\int_{t_0}^{+\infty} \| e(t)\| <+\infty}$ and $\DS{\int_{t_0}^{+\infty} \| \dot{e}(t)\| <+\infty}$. Then
for any global strong solution of \eqref{eq:fos2},  $(x,y):[t_0,+\infty[\to\cH\times\cH$
\begin{enumerate}[label={\rm (\roman*)}]
\item \label{gweak-conv2-1}
$W$ is well-defined and non-increasing on $[t_1,+\infty[$ for some $t_1 \geq t_0$. 
\item \label{gweak-conv2-5} 
  $\DS{\int_{t_0}^{+\infty}\frac{1}{t}\|\dot x(t)\|^2dt<+\infty}$, 
  $\DS{\int_{t_0}^{+\infty}\frac{1}{t}\|\xi(t)\|^2dt<+\infty}$.
  \item \label{gweak-conv2-2} 
$\lim_{t\to+\infty}W(t)=\lim_{t\to+\infty}f(x(t))=\inf_{\cH} f \in \R\cup\{-\infty\}$, \ $\lim_{t\to+\infty}\|\dot x(t) +\beta \xi(t)\|=0$.
\item \label{gweak-conv2-3}
As $t\to+\infty$, every sequential weak cluster point of $x(t)$ belongs to $S$.
\item\label{gweak-conv2-6} 
If, moreover, the solution set $S\neq\emptyset$ and $\DS{\int_{t_0}^{+\infty} \log t ~ \| e(t)\|  <+\infty}$ and $\DS{\int_{t_0}^{+\infty} \log t ~ \| \dot{e}(t)\| < +\infty}$, then 
\begin{enumerate}[label={\rm (\alph*)}]
  \item \label{gweak-conv2-6a}
  $f(x(t))-\inf_{\cH} f=\DS{\cO\pa{\frac{1}{\log t}}}$ and 
  $\|\dot u(t)\|=\DS{\cO\pa{\frac{1}{\sqrt{\log t}}}}$ as $t \to +\infty$.
  \item \label{gweak-conv2-6b}
  $\DS{\int_{t_0}^{+\infty}\frac{1}{t}(f(x(t))-\inf_{\cH} f)dt < +\infty}$.
  \end{enumerate}
\end{enumerate}
}
\end{theorem}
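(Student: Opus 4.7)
My plan is to mirror the Lyapunov analysis of Theorem~\ref{lem:smoothexplicitestimates}, replacing the smooth chain rule for $f \circ x$ by its subdifferential counterpart in Theorem~\ref{Thm-existence}\ref{existence-6}\ref{existence-6:d}. Set $g(t) := e(t) + \beta \dot e(t)$ and introduce the shifted auxiliary map $\widetilde u(t) := x(t) + \beta \int_{t_0}^t \xi(s) ds$, where $\xi$ is the selection from Theorem~\ref{Thm-existence}\ref{existence-6}. Then $\widetilde u$ differs from $u$ in \eqref{u0} only by the constant $x_0$, so by \eqref{guniter} its derivative $\dot{\widetilde u} = \dot u = \dot x + \beta \xi$ is absolutely continuous on compact subintervals of $]t_0,+\infty[$, with $\ddot{\widetilde u} = \ddot u$ given by \eqref{dguniter0}. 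This regularity lets us differentiate energy functions almost everywhere and apply subgradient inequalities wherever the smooth case used gradient ones.

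\textbf{Parts \ref{gweak-conv2-1}--\ref{gweak-conv2-3}.} Substituting \eqref{dguniter0} into $\dot W_T$ and invoking Theorem~\ref{Thm-existence}\ref{existence-6}\ref{existence-6:d}, one recovers verbatim the smooth-case estimate $\dot W_T(t) \leq -\frac{\alpha}{2t}\|\dot x(t)\|^2 - \frac{\beta}{2}\|\xi(t)\|^2$ for $t \geq t_1 := \max(t_0, 2\alpha\beta)$. Monotonicity of $W_T$, the bound $f(x(t)) \geq \inf_{\cH} f$, and Gronwall's Lemma~\ref{lem:BrezisA5} then give $\sup_{t \geq t_1}\|\dot u(t)\| < +\infty$, so the tail $\int_t^{+\infty}\langle \dot u, g\rangle d\tau$ converges absolutely, establishing \ref{gweak-conv2-1}. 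Integrating the derivative estimate and using the lower bound on $W$ yield \ref{gweak-conv2-5}. For \ref{gweak-conv2-2}, $W$ has a limit $W_\infty \geq \inf_{\cH} f$ and $\frac{1}{2}\|\dot u(t)\|^2 + f(x(t)) \to W_\infty$; to prove $W_\infty = \inf_{\cH} f$ I argue by contradiction as in Theorem~\ref{lem:smoothexplicitestimates}: assuming $W_\infty > \inf_{\cH} f$, pick $z$ with $f(z) < \tfrac{W_\infty + \inf_{\cH} f}{2}$, define $h(t) := \frac{1}{2}\|\widetilde u(t) - z\|^2$, use the subgradient inequality $\langle \xi(t), x(t) - z\rangle \geq f(x(t)) - f(z)$ in place of the gradient inequality, and rerun the argument leading to $\frac{1}{4}(W_\infty - \inf_{\cH} f)\log(\tau/t_1) \leq C$, a contradiction. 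Weak lower semi-continuity of $f$ then gives \ref{gweak-conv2-3}.

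\textbf{Part \ref{gweak-conv2-6}.} For \ref{gweak-conv2-6b}, reuse the function $h$ with $z = x^\star \in S$ (so $f(z) = \inf_{\cH} f$), multiply the resulting inequality by $1/t$, and integrate on $[t_1,+\infty[$. All remaining terms are controlled by $\sup_{t\geq t_1}\|\dot u(t)\|$, the $L^1$ hypotheses on $e$ and $\dot e$, part \ref{gweak-conv2-5} (via $\|\dot u\|^2 \leq 2\|\dot x\|^2 + 2\beta^2\|\xi\|^2$), and Lemma~\ref{L:int_bounded} for the term with $\dot I$. For the rates in \ref{gweak-conv2-6a}, exploit the monotonicity of $W$ to write
\[
(W(t) - \inf_{\cH} f)\log(t/t_1) \leq \int_{t_1}^t \frac{W(s) - \inf_{\cH} f}{s}\,ds.
\]
Expanding $W(s) - \inf_{\cH} f$ by definition and swapping the order of integration in the tail term yields
\[
\int_{t_1}^t \frac{W(s) - \inf_{\cH} f}{s}\,ds = \int_{t_1}^t \frac{\tfrac{1}{2}\|\dot u(s)\|^2 + (f(x(s)) - \inf_{\cH} f)}{s}\,ds - \int_{t_1}^t \log(\tau/t_1)\langle\dot u(\tau), g(\tau)\rangle d\tau - \log(t/t_1)\int_t^{+\infty}\langle\dot u, g\rangle d\tau .
\]
The first right-hand term is finite by \ref{gweak-conv2-5} and \ref{gweak-conv2-6b}; the second is bounded by $\sup\|\dot u\| \int_{t_0}^{+\infty}\log t\,\|g(t)\|dt$, finite under the strengthened hypothesis; the third is bounded thanks to $\log t\int_t^{+\infty}\|g(s)\|ds \leq \int_t^{+\infty}\log s\,\|g(s)\|ds \to 0$. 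Hence $W(t) - \inf_{\cH} f = \cO(1/\log t)$, and the same control transfers this rate to $\tfrac{1}{2}\|\dot u(t)\|^2 + (f(x(t)) - \inf_{\cH} f)$, giving both estimates in \ref{gweak-conv2-6a}.

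\textbf{Main obstacle.} The principal technical care lies in the contradiction step establishing $W_\infty = \inf_{\cH} f$. One must verify that $h(t) = \tfrac{1}{2}\|\widetilde u(t) - z\|^2$ is twice differentiable almost everywhere on $]t_0,+\infty[$ (inherited from absolute continuity of $\dot{\widetilde u}$), that the decomposition $\langle \widetilde u(t) - z, \xi(t)\rangle = \langle x(t) - z, \xi(t)\rangle + \beta \dot I(t)$ with $I(t) := \tfrac{1}{2}\|\int_{t_0}^t \xi(s)ds\|^2$ propagates through the smooth-case manipulations, and that the integration-by-parts bounds on $\int \ddot h/t\,dt$ and $\int \dot I/t\,dt$ used in Theorem~\ref{lem:smoothexplicitestimates} remain available (the former because $h \geq 0$ allows dropping non-positive residuals, the latter via Lemma~\ref{L:int_bounded}). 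Once this bookkeeping is verified, every step of the smooth Lyapunov analysis transfers, and the quantitative rate in \ref{gweak-conv2-6a} follows from the reinforced integrability through the monotonicity argument above.
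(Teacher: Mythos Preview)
Your proposal is correct and follows the same Lyapunov architecture as the paper (energy $W$ built on $\dot u=\dot x+\beta\xi$, anchor function $h(t)=\tfrac12\|\widetilde u(t)-z\|^2$, subgradient inequality in place of the gradient one). For parts \ref{gweak-conv2-1}--\ref{gweak-conv2-3} the arguments are essentially interchangeable: you derive $\dot W_T\le -\tfrac{\alpha}{2t}\|\dot x\|^2-\tfrac{\beta}{2}\|\xi\|^2$ (the direct analogue of \eqref{eq:WTbound}), whereas the paper manipulates via \eqref{dguniter} to the equivalent bound $\dot W_T\le -\tfrac{\alpha}{2t}(\|\dot x\|^2+\|\dot u\|^2)$; for the contradiction in \ref{gweak-conv2-2} you replay the smooth argument verbatim, while the paper substitutes $\|\dot u\|^2\le -\tfrac{2t}{\alpha}\dot W$ and integrates once more to reach the master inequality \eqref{eq:bndWnonsmooth}.

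The one genuine departure is your treatment of \ref{gweak-conv2-6}\ref{gweak-conv2-6a}. The paper reads the rate off \eqref{eq:bndWnonsmooth} (a by-product of its route to \ref{gweak-conv2-2}) after dividing by $t$. You instead prove \ref{gweak-conv2-6}\ref{gweak-conv2-6b} first, then exploit monotonicity of $W$ via $(W(t)-\inf f)\log(t/t_1)\le\int_{t_1}^t\frac{W(s)-\inf f}{s}\,ds$ together with the Fubini identity for the tail term. This is a cleaner and more self-contained argument: it avoids carrying the long chain of constants in \eqref{eq:bndWnonsmooth} and makes transparent exactly where the reinforced $\log t$-integrability enters. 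The price is that \ref{gweak-conv2-6}\ref{gweak-conv2-6b} must be established first, but as you note this needs only the basic $L^1$ hypothesis on $g$, so nothing is lost.
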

\begin{proof}
Since we are interested in asymptotic analysis, we can assume $t \geq t_1=\max\pa{t_0,2\alpha\beta}$.
\paragraph{Claim~\ref{gweak-conv2-1}}
According to Theorem \ref{Thm-existence}, $W_T$ is absolutely continuous. Taking the derivative and using the chain rule we get  
\[
\dot W_T(t)=\langle\dot u(t),\ddot u(t)\rangle+\langle\xi(t),\dot x(t)\rangle +\langle \dot{u}(t), e(t)+ \beta \dot{e}(t)\rangle,
\]
for almost every $T> t>t_0$. Now use \eqref{gubis} and \eqref{dguniter} to obtain
\begin{align*}
\dot W_T(t)& = 
\dotp{\dot u(t)}{\pa{\frac{1}{\beta}-\frac{\alpha}{t}}\dot x(t)
  -\frac{1}{\beta}\dot u(t) -(e(t)+ \beta \dot{e}(t))} +\dotp{\xi(t)}{\dot x(t)} +\dotp{ \dot{u}(t)}{e(t)+ \beta \dot{e}(t)} \nonumber \\ 
& =
\dotp{\dot u(t)}{\pa{\frac{1}{\beta}-\frac{\alpha}{t}}\dot x(t)
  -\frac{1}{\beta}\dot u(t)} +\dotp{\xi(t)}{\dot x(t)}  \nonumber \\
& =
-\frac{1}{\beta}\|\dot u(t)\|^2
  +   \dotp{\dot x(t)}{\pa{\frac{1}{\beta}-\frac{\alpha}{t}}\dot u(t) + \xi(t) } \nonumber \\ 
& = 
-\frac{1}{\beta}\|\dot u(t)\|^2
  +   \dotp{\dot x(t)}{\pa{\frac{1}{\beta}-\frac{\alpha}{t}}\dot u(t) + \frac{1}{\beta}(\dot u(t)-\dot x(t))} \nonumber \\ 
& =
-\frac{1}{\beta}\|\dot u(t)\|^2
  +   \dotp{\dot x(t)}{\pa{\frac{2}{\beta}-\frac{\alpha}{t}}\dot u(t) - \frac{1}{\beta}\dot x(t)} \nonumber \\
& = 
-\frac{1}{\beta}\|\dot u(t)\|^2 -\frac{1}{\beta}\|\dot x(t)\|^2
  +  \pa{\frac{2}{\beta}-\frac{\alpha}{t}} \dotp{\dot x(t)}{\dot u(t)} \nonumber \\
& \leq
  -\frac{\alpha}{2t}\|\dot x(t)\|^2-\frac{\alpha}{2t}\|\dot u(t)\|^2,
\end{align*}
for almost every 
$t \geq t_1$. So $W_T$ is non-increasing on $[t_1,+\infty[$, because it is absolutely continuous and its derivative is non-positive therein.  
Therefore  $W_T (t) \leq W_T (t_1)$ for all $t\in [t_1, T]$.
Equivalently
\[
\frac{1}{2}\|\dot u(t)\|^2 + f (x(t)) - \int_t^T \langle \dot{u}(\tau),e(\tau) +\beta \dot{e}(\tau)\rangle d\tau
\leq \frac{1}{2}\|\dot u(t_1)\|^2 + f (x(t_1)) - \int_{t_1}^T \langle \dot{u}(\tau),e(\tau) +\beta \dot{e}(\tau)\rangle d\tau.
\]
After simplification, and setting $C= \frac{1}{2}\|\dot u(t_1)\|^2 + f (x(t_1))- \inf f(\cH)$, we obtain
\[
\frac{1}{2}\|\dot u(t)\|^2 
\leq C - \int_{t_1}^t \langle \dot{u}(\tau),e(\tau) + \beta \dot{e}(\tau)\rangle d\tau.
\]
By Cauchy-Schwarz inequality we get
\[
\frac{1}{2}\|\dot u(t)\|^2 
\leq C + \int_{t_0}^t \|\dot{u}(\tau)\| \|e(\tau) +\beta \dot{e}(\tau)\| d\tau.
\]
According to {Gronwall's} Lemma~\ref{lem:BrezisA5}
\begin{equation}\label{gE:u_b}
\|\dot u(t)\| 
\leq \sqrt{2C} + \int_{t_0}^t  \|e(\tau) +\beta \dot{e}(\tau)\| d\tau \leq M \eqdef  \sqrt{2C} + \int_{t_0}^{+\infty}  \|e(\tau) +\beta \dot{e}(\tau)\| d\tau.
\end{equation}
So, $\|\dot u(t)\|$ is bounded on $[t_0, +\infty[$, which allows us to define
\begin{equation}\label{gE:W_bb}
W(t)=\frac{1}{2}\|\dot u(t)\|^2 + f (x(t)) - \int_t^{+\infty} \dotp{\dot{u}(\tau)}{e(\tau) +\beta \dot{e}(\tau)} d\tau.
\end{equation}
Noticing that $W$ and $W_T$ have the same derivative we conclude that
\begin{equation}\label{gE:W_bbb}
\dot{W}(t) +\frac{\alpha}{2t}\|\dot x(t)\|^2 +\frac{\alpha}{2t}\|\dot u(t)\|^2 \leq 0,
\end{equation}
and thus $W$ is non-increasing on $[t_1,+\infty[$.
\paragraph{Claim~\ref{gweak-conv2-5}}
Integrating \eqref{gE:W_bbb}, and using that $f$, and hence $W$, is bounded from below, we obtain,
\begin{equation}\label{gE:u_bb}
 \int_{t_0}^{+\infty}\frac{1}{t}\|\dot x(t)\|^2dt<+\infty, \qandq
 \int_{t_0}^{+\infty}\frac{1}{t}\|\dot u(t)\|^2dt<+\infty .
\end{equation}
Using Jensen's inequality, we get the integrability claim on $\xi(t)$. 
\paragraph{Claim~\ref{gweak-conv2-2}}
Given $z\in\cH$, let us define $h:[t_0,+\infty [ \to\R_+$ by
$
h(t)=\frac{1}{2}\|u(t)-z\|^2.
$
The function $h$ is continuously differentiable with
\[
\dot h(t) = \dotp{ u(t) - z }{\dot{u}(t)  },
\]
and  $\dot h$ is absolutely continuous on compact subintervals of $[t_0,+\infty[$ (since $\dot u$ is) and satisfies 
\[
\ddot h(t) = \dotp{ u(t) - z }{\ddot{u}(t)  } + \| \dot{u}(t) \|^2
\]
for almost every $t>t_0$. Using \eqref{gubis} and \eqref{dguniter0}
we get 
\[
\ddot u(t) + \frac{\alpha}{t} \dot u(t) =  -\pa{1-\frac{\alpha\beta}{t}} \xi(t) -(e(t) +\beta \dot{e}(t))  .
\]
Therefore, for almost every $t>t_0$
\begin{eqnarray*} 
&&\ddot h(t) + \frac{\alpha}{t} \dot h(t)  
 = 
  \|\dot u(t)\|^2
  - \dotp{ u(t)-z}{\pa{1-\frac{\alpha\beta}{t}} \xi(t)} -\dotp{ u(t)-z}{e(t) +\beta \dot{e}(t)}
\\
&& =
  \|\dot u(t)\|^2
  - \pa{1-\frac{\alpha\beta}{t}} \dotp{x(t)-z -x_0 + \beta \int_{t_0}^t\xi(s)ds}{\xi(t)} -\dotp{ u(t)-z}{e(t) +\beta \dot{e}(t)}
\\
&& \leq 
  \|\dot u(t)\|^2
  -\pa{1-\frac{\alpha\beta}{t}} \dotp{x(t)-z}{\xi(t)}
  -\pa{1-\frac{\alpha\beta}{t}}
    \dotp{-x_0+ \beta \int_{t_0}^t\xi(s)ds}{\xi(t)}
    + \|  e(t) +\beta \dot{e}(t) \| \|  u(t)-z \| .
\end{eqnarray*}
To interpret $ \left\langle-x_0+ \beta \int_{t_0}^t\xi(s)ds\,,\,\xi(t)\right\rangle$ as a temporal derivative, let us introduce
\begin{center}
$
I(t)=\frac{1}{2\beta}\left\|-x_0+ \beta\int_{t_0}^t\xi(s)ds\right\|^2.
$
\end{center}
Then $I(\cdot)$ is locally absolutely continuous and 
$
\dot I(t)=\dotp{-x_0+\DS{\int_{t_0}^t\xi(s)ds}}{\xi(t)}
$ 
almost everywhere, because $\xi\in L^2(t_0,T;\cH)\subseteq L^1(t_0,T;\cH)$ for all $T >t_0$; see part {\ref{existence-6}-\ref{existence-6:c}} of Theorem \ref{Thm-existence}. So, 
\[
\ddot h(t) + \frac{\alpha}{t} \dot h(t)\leq  
  \|\dot u(t)\|^2
  -\pa{1-\frac{\alpha\beta}{t}}\dotp{x(t)-z}{\xi(t)}
  -\pa{1-\frac{\alpha\beta}{t}}\dot I(t) +\|  e(t) +\beta \dot{e}(t) \| \|  u(t)-z \|,
\]
for almost every $t>t_0$. On the other hand, by convexity of $f$ and  $\xi(t) \in \partial f(x(t))$ 
\[
\dotp{x(t)-z}{\xi(t)} \geq f(x(t) - f(z).
\]
Therefore
\[
\ddot h(t) + \frac{\alpha}{t} \dot h(t) + \pa{1-\frac{\alpha\beta}{t}}(f(x(t)) - f(z)) +\pa{1-\frac{\alpha\beta}{t}}\dot I(t)\leq  \|\dot u(t)\|^2  + \|  e(t) +\beta \dot{e}(t) \| \|  u(t)-z \|.
\]
Using the definition \eqref{gE:W_bb} of $W$,  we get
\begin{multline*}
\ddot h(t) + \frac{\alpha}{t} \dot h(t) +\pa{1-\frac{\alpha\beta}{t}}(W(t) - f(z)) +\pa{1-\frac{\alpha\beta}{t}}\dot I(t) \leq  \pa{\frac{3}{2}-\frac{\alpha\beta}{2t}}\|\dot u(t)\|^2 \\
+ \|  e(t) +\beta \dot{e}(t) \| \|  u(t)-z \| -\pa{1-\frac{\alpha\beta}{t}}\int_t^{+\infty} \dotp{ \dot{u}(\tau)}{e(\tau) +\beta \dot{e}(\tau)} d\tau .    
\end{multline*}
According to \eqref{gE:W_bbb}, we have $\|\dot u(t)\|^2 \leq -\frac{2t}{\alpha} \dot{W}(t)$. Therefore,  
\begin{multline*}
\ddot h(t) + \frac{\alpha}{t} \dot h(t) +\pa{1-\frac{\alpha\beta}{t}}(W(t) - f(z)) +\pa{1-\frac{\alpha\beta}{t}}\dot I(t) \leq  -\pa{\frac{3t}{\alpha}-\beta } \dot W(t) \\
+ \|  e(t) +\beta \dot{e}(t) \| \|  u(t)-z \| -\pa{1-\frac{\alpha\beta}{t}}\int_t^{+\infty} \dotp{ \dot{u}(\tau)}{e(\tau) +\beta \dot{e}(\tau)} d\tau .    
\end{multline*}
Dividing by $t$ and rearranging the terms, we have with $g(t) \eqdef   e(t) +\beta \dot{e}(t) $
\begin{multline*}
\frac{1}{t}\ddot h(t)+\pa{\frac{1}{t}-\frac{\alpha\beta}{t^2}}\pa{W(t)-f(z)}\leq -\pa{\frac{3}{\alpha}-\frac{\beta}{t}}\dot W(t)-\left[\frac{\alpha}{t^2}\dot h(t)+ \pa{\frac{1}{t}-\frac{\alpha\beta}{t^2}}  \dot I(t)\right]\\
+\frac{1}{t} \|  g(t)  \| \|  u(t)-z \| -\pa{\frac{1}{t}-\frac{\alpha\beta}{t^2}}\int_t^{+\infty} \dotp{\dot{u}(\tau)}{g(\tau)} d\tau.
\end{multline*}
After integration, and using Lemma \ref{L:int_bounded}, we get
\begin{equation} \label{E:h_dot_pert}
\frac{1}{t}\dot h(t)+\int_{t_1}^t\pa{\frac{1}{s}-\frac{\alpha\beta}{s^2}}\big(W(s)-f(z)\big)\,ds \le -\int_{t_1}^t\pa{\frac{3}{\alpha}-\frac{\beta}{s}}\dot W(s)\,ds+C + K_1(t) + K_2 (t),
\end{equation} 
where
\[
K_1(t)= \int_{t_1}^t   \frac{1}{s}  \|  g(s)  \| \|  u(s)-z \| ds\ \qandq K_2 (t)= \int_{t_1}^t \pa{\frac{1}{s}-\frac{\alpha\theta}{s^2}}  \int_s^{\infty} \| \dot u (\tau)\| \| g(\tau) \| d\tau ds.
\]
Let us  majorize  $K_1 (t)$ and $K_2 (t)$. The relation 
\[
\norm{u(s)-z} \leq \norm{u(t_1)-z} + \int_{t_1}^{s}\norm{\dot{u}(\tau)} d\tau, 
\]
and $\dot{u}(\cdot)$ bounded (see \eqref{gE:u_b}) give
\[
K_1(t)\leq \int_{t_{1}}^{t}\frac{1}{s} \| g(s)\| \|u (s) -z \|  ds \leq \pa{  \frac{\Vert u(t_1)-z\Vert}{t_{1}}+
\sup_{t\ge t_1}\Vert \dot{u}(\tau)\Vert} \int_{t_{1}}^{+\infty}\Vert g(s)\Vert ds\leq C < +\infty.
\]
For $K_2 (t)$, we use again $\dot{u}(\cdot)$ bounded (see \eqref{gE:u_b})   and integration by parts to obtain
$$
K_2 (t)\leq C \int_{t_{1}}^{t} \pa{ \frac{1}{s}\int_{s}^{\infty}\norm{g(\tau)} d\tau}  ds \leq C\pa{ \log t \int_{t}^{\infty}\norm{g(\tau)} d\tau + \int_{t_{1}}^{t} \norm{g(\tau)}\log \tau  \ d\tau+1 } .
$$
Let us examine the integral terms that enter \eqref{E:h_dot_pert}.
Since $W(\cdot)$ is non-increasing
\begin{eqnarray} \label{E:int_W_pert}
\int_{t_1}^t\pa{\frac{1}{s}-\frac{\alpha\beta}{s^2}}\big(W(s)-f(z)\big)\,ds 
& \ge &  \pa{W(t)-f(z)}\int_{t_1}^t\pa{\frac{1}{s}-\frac{\alpha\beta}{s^2}}\,ds \nonumber \\
& = & \pa{W(t)-f(z)}\pa{\log t - \log t_1+\frac{\alpha\beta}{t}-\frac{\alpha\beta}{t_1}}.
\end{eqnarray}
In turn, integration by parts gives
\begin{eqnarray} \label{E:int_dot_W_pert}
&&-\int_{t_1}^t\pa{\frac{3}{\alpha}-\frac{\beta}{s}}\dot W(s)\,ds 
= \pa{\frac{3}{\alpha}-\frac{\beta}{t_1}}\big(W(t_1)-f(z)\big)
-\pa{\frac{3}{\alpha}-\frac{\beta}{t}}\pa{W(t)-f(z)} + \beta\int_{t_1}^t\frac{W(s)-f(z)}{s^2}\,ds \nonumber \\
&& \hspace{2.5cm}\le  \pa{\frac{3}{\alpha}-\frac{\beta}{t_1}}\big(W(t_1)-f(z)\big)
-\pa{\frac{3}{\alpha}-\frac{\beta}{t}}\pa{W(t)-f(z)}
+\beta\big(W(t_1)-f(z)\big)\pa{\frac{1}{t_1}-\frac{1}{t}},\nonumber\\
&& \hspace{2.5cm} \leq \frac{3}{\alpha}\big|W(t_1)-f(z)\big|-\pa{\frac{3}{\alpha}-\frac{\beta}{t}}\pa{W(t)-f(z)}
\end{eqnarray} 
since $t\mapsto W(t)-f(z)$ is non-increasing and $t\ge t_1\ge\alpha\beta$.
Combining \eqref{E:h_dot_pert} with \eqref{E:int_W_pert} and \eqref{E:int_dot_W_pert},  we obtain
\[
\frac{1}{t}\dot h(t)
+\pa{W(t)-f(z)}\pa{\log t+D+\frac{E}{t}} 
\leq C \pa{ \log t \int_{t}^{\infty}\norm{g(\tau)} d\tau + \int_{t_{1}}^{t} \norm{g(\tau)}\log \tau  \ d\tau+1 } 
\]
for appropriate constants $C,D,E\in\R$.
Now, take $t_2\ge t_1$ such that $\log s+D+\frac{E}{s}\ge 0$ for all $s\ge t_2$. Integrate from $t_2$ to $t$ and use again that $W$ is non-increasing to obtain
\begin{multline*}
\frac{h(t)}{t}-\frac{h(t_2)}{t_2}+\int_{t_2}^t\frac{h(s)}{s^2}ds +\pa{W(t)-f(z)}
\int_{t_2}^t\pa{\log s+D+\frac{E}{s}}ds\\
\leq C' \int_{t_2}^t  \pa{ \log s \int_{s}^{\infty}\norm{g(\tau)} d\tau + \int_{t_{1}}^{s} \norm{g(\tau)}\log \tau  \ d\tau + 1 }ds.
\end{multline*}
Since $h$ is non-negative, this implies
\begin{multline}\label{eq:bndWnonsmooth}
\pa{W(t)-f(z)} \pa{t\log t+(D-1)t+E\log t+  F} \\
\leq C'\pa{t + t \log t \int_{t}^{\infty}\norm{g(\tau)} d\tau  + \int_{t_{2}}^{t} \norm{ g(\tau)} \tau \log \tau  \ d\tau  +   t \int_{t_{1}}^{t}\norm{g(\tau)} \log \tau d\tau }  + G,
\end{multline}
for some appropriate constants $D,E,F,G\in\R$. Divide by $t \log t$, let $t\to+\infty$, and use Lemma \ref{basic-int}, to obtain $\lim_{t\to+\infty}W (t) \leq f(z)$. 
The integrability of $g$ and $\dot{u}(\cdot)$ bounded (see \eqref{gE:u_b}) yield $\DS{\lim_{t\to+\infty} \int_t^{+\infty} \dotp{\dot{u}(\tau)}{g(\tau)}d\tau =0}$. As a consequence, 
\[
\lim_{t\to+\infty}\pa{  f(x(t))+\frac{1}{2}\|\dot x(t)+\beta \xi(t)\|^2 } \leq f(z)
\]
for each $z \in \cH$. Thus 
\[
\inf_{\cH} f \leq \liminf_{t \to +\infty} f(x(t)) \leq \limsup_{t \to +\infty} f(x(t)) \leq \lim_{t \to +\infty} \pa{f(x(t))+\frac{1}{2}\|\dot x(t)+\beta \xi(t)\|^2} \leq \inf_{\cH} f,
\]
whence we get $\lim_{t\to+\infty}f (x(t))= \inf_{\cH} f $, and thus $\lim_{t\to+\infty} \|\dot x(t)+\beta \xi(t)\|  =0$. 
%
%
\paragraph{Claim~\ref{gweak-conv2-3}} This follows from claim~{\ref{gweak-conv2-2}} and lower semicontinuity of $f$.
\paragraph{Claim~\ref{gweak-conv2-6}-\ref{gweak-conv2-6a}} Let $x^\star \in S$. We start from \eqref{eq:bndWnonsmooth} with $z=x^\star$ and divide by $t$. To conclude, we note that
\begin{gather*}
\log t \int_{t}^{\infty}\norm{g(\tau)} d\tau \leq \int_{t}^{\infty}\log \tau \norm{g(\tau)} d\tau < +\infty, \\
\int_{t_{2}}^{t} \norm{ g(\tau)} \frac{\tau}{t} \log \tau  \ d\tau \leq \int_{t_{2}}^{t} \norm{ g(\tau)} \log \tau  \ d\tau < +\infty \qandq \\
\log t \int_t^{+\infty} \dotp{\dot{u}(\tau)}{g(\tau)}d\tau \leq C \int_t^{+\infty} \log \tau\norm{g(\tau)}d\tau,
\end{gather*}
where $C = \sup_{t \geq t_0} \norm{\dot{u}(t)} < +\infty$ (see \eqref{gE:u_b}).
\paragraph{Claim~\ref{gweak-conv2-6}-\ref{gweak-conv2-6b}} Putting together \eqref{E:h_dot_pert} and \eqref{E:int_dot_W_pert} with $z=x^\star \in S$, and using non-negativity of $h$, we infer that for some positive constant $C$
\[
\pa{1-\frac{\alpha\beta}{t_1}}\int_{t_1}^t\frac{1}{s}\pa{W(s)-f(z)} ds \leq C + K_2 (t).
\]
Arguing similarly as for proving part {\ref{gweak-conv2-6}-\ref{gweak-conv2-6a}}, we can show that $K_2(\cdot)$ is bounded. Thus
\[
\int_{t_1}^t\frac{1}{s}\pa{f(x(s))-f(z) + \norm{\dot{u}(s)}^2} ds \leq \int_{t_1}^t\frac{1}{s}\pa{W(s)-f(z)} ds + \frac{1}{t_
1}\int_{t_1}^t\norm{\dot{u}(s)}\norm{g(s)} ds < +\infty,
\]
which completes the proof. \qed
\end{proof}

\subsubsection{Fast convergence rates}

When $\alpha \geq 3$, under a reinforced integrability assumption on the perturbation term, we will show fast convergence results. The following theorem is the non-smooth counterpart of Theorem \ref{fast_conv_smooth}. 

\begin{theorem}\label{fastconv-thm}
Suppose that  $\alpha \geq 3$.
Let $f \in \Gamma_0(\cH)$ such that $S \neq \emptyset$. Suppose that $e(\cdot) \in \cW^{1,1} (t_0,T; \cH)$ for all $T >t_0$, with $\DS{\int_{t_0}^{+\infty} t \| e(t) + \beta \dot{e}(t)\| dt<+\infty}$. Then,
for any global strong solution $(x,y)$ of \eqref{eq:fos2}
\begin{enumerate}[label={\rm (\roman*)}]
\item \label{gfast-conv2-1} $f(x(t))- \min_{\cH} f = \cO\pa{ t^{-2}}.$
\item\label{gfast-conv2-2} 
  $\DS{\int_{t_0}^{+\infty} t(f(x(t))-\min_{\cH} f)dt<+\infty}$,
  $\DS{\int_{t_0}^{+\infty} t^2\|\xi(t)\|^2dt<+\infty}$, 
  $\DS{\int_{t_0}^{+\infty} t\|\dot{x}(t)\|^2dt<+\infty}$.
\item\label{gfast-conv2-3} 
  $\|\dot x(t)+\beta\xi(t)\|=\cO(t^{-1})$.
\end{enumerate} 
\end{theorem}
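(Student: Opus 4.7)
The strategy is to mirror the smooth argument of Theorem~\ref{fast_conv_smooth}, but everywhere substitute the selection $\xi(t)\in\partial f(x(t))$ (supplied by Theorem~\ref{Thm-existence}\ref{existence-6}) for $\nabla f(x(t))$, and use the identity $\dot u(t)=\dot x(t)+\beta\xi(t)$ from \eqref{gubis} together with the chain rule $\frac{d}{dt}f(x(t))=\langle\xi(t),\dot x(t)\rangle$ from Theorem~\ref{Thm-existence}\ref{existence-6}\ref{existence-6:d}. Concretely, recalling $w(t)=1-\beta/t$ and $\delta(t)=t^2 w(t)$ from \eqref{def:w_delta}, I will set $g(t)=e(t)+\beta\dot e(t)$,
\[
v(t) \eqdef (\alpha-1)(x(t)-x^\star)+t\dot u(t) = (\alpha-1)(x(t)-x^\star)+t(\dot x(t)+\beta\xi(t)),
\]
and, for $T>t_0$, define on $[t_0,T]$ the candidate Lyapunov function
\[
\cE(t) \eqdef \delta(t)(f(x(t))-\min_\cH f)+\tfrac{1}{2}\|v(t)\|^2-\int_t^T \tau\langle v(\tau),g(\tau)\rangle\,d\tau.
\]
Since $u\in\cC^1$ and $\dot u$ is absolutely continuous on compact subintervals of $[t_0,+\infty[$ (this follows from \eqref{dguniter0}), $v$ is absolutely continuous, and so is $t\mapsto f(x(t))$ by Theorem~\ref{Thm-existence}. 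The function $\cE$ is therefore well-defined and absolutely continuous.

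The first step is to compute $\dot\cE(t)$ almost everywhere. Using $\ddot u(t)=-\tfrac{\alpha}{t}\dot x(t)-\xi(t)-g(t)$ from \eqref{dguniter0},
\[
\dot v(t) = (\alpha-1)\dot x(t)+\dot u(t)+t\ddot u(t)
         = \alpha\dot x(t)+\beta\xi(t)-\alpha\dot x(t)-t\xi(t)-tg(t)
         = -tw(t)\xi(t)-tg(t),
\]
exactly as in the smooth case. Combining with the chain rule to differentiate $\delta(t)(f(x(t))-\min_\cH f)$ and with $\langle v(t),\dot v(t)+tg(t)\rangle$, the cross terms telescope to
\[
\dot\cE(t)=\dot\delta(t)(f(x(t))-\min_\cH f)-(\alpha-1)tw(t)\langle\xi(t),x(t)-x^\star\rangle-\beta\delta(t)\|\xi(t)\|^2.
\]
Invoking the convex subdifferential inequality $\langle\xi(t),x(t)-x^\star\rangle\ge f(x(t))-\min_\cH f$ and the bound $(\alpha-1)tw(t)-\dot\delta(t)\ge ct$ for $t$ large enough and some $c\ge 0$ (with $c>0$ when $\alpha>3$; the borderline case $\alpha=3$ simply yields non-positivity of the leading factor, which still permits monotonicity of $\cE$), we obtain $\dot\cE(t)\le 0$, hence $\cE(t)\le\cE(t_0)$. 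Discarding the nonnegative first term and using Cauchy--Schwarz on the integral gives $\tfrac12\|v(t)\|^2\le C+\int_{t_0}^{t}\|v(\tau)\|\,\tau\|g(\tau)\|\,d\tau$; Gronwall's lemma~\ref{lem:BrezisA5} and the assumption $\int_{t_0}^{+\infty}t\|g(t)\|\,dt<+\infty$ yield $\sup_{t\ge t_0}\|v(t)\|<+\infty$. Reinjecting this bound in $\cE(t)\le\cE(t_0)+\sup_t\|v(t)\|\int_{t_0}^{+\infty}\tau\|g(\tau)\|\,d\tau$ and using $\delta(t)\ge t^2/(\alpha-2)$ for $t$ large establishes claim~\ref{gfast-conv2-1}. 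Integrating the Lyapunov inequality and using boundedness of $\cE$ from below gives claim~\ref{gfast-conv2-2}, as in the smooth case.

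For claim~\ref{gfast-conv2-3}, I need $t\|\dot u(t)\|$ bounded, which, via $t\dot u(t)=v(t)-(\alpha-1)(x(t)-x^\star)$ and boundedness of $\|v(t)\|$, reduces to showing $\sup_{t\ge t_0}\|x(t)-x^\star\|<+\infty$. This is exactly the obstacle that motivated the refined Lyapunov function \eqref{eq:Eeps} in the smooth case; here I will mimic that construction by defining, for some $\eps\in\,]0,\alpha-3[$ (when $\alpha>3$; treat $\alpha=3$ separately using the specialized argument of \cite{attouch2018fast}),
\begin{multline*}
\cE_\eps(t) \eqdef (\delta(t)+\eps\beta t)(f(x(t))-\min_\cH f) + \tfrac{1}{2}\|v_\eps(t)\|^2 \\
+\tfrac{\eps(\alpha-1-\eps)}{2}\|x(t)-x^\star\|^2 - \int_t^T \tau\langle v_\eps(\tau),g(\tau)\rangle\,d\tau,
\end{multline*}
with $v_\eps(t)=(\alpha-1-\eps)(x(t)-x^\star)+t\dot u(t)$. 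Repeating the differentiation performed above, the extra quadratic term contributes $\eps(\alpha-1-\eps)\langle\dot x(t),x(t)-x^\star\rangle$ which combines with the cross product in $\tfrac12\|\dot{v}_\eps\|^2$ to reproduce the same structure as in \eqref{eq:Eeps}: after using the subdifferential inequality and the bound on $(\alpha-1-\eps)tw(t)-\dot\delta(t)$, $\cE_\eps$ is again non-increasing up to the error integral. Gronwall applied to $\tfrac{\eps(\alpha-1-\eps)}{2}\|x(t)-x^\star\|^2+\tfrac12\|v_\eps(t)\|^2$ and the integrability assumption on $t\|g(t)\|$ give the desired uniform bound on $\|x(t)-x^\star\|$, whence (iii) follows.

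The main technical obstacle is handling the chain rule and the almost-everywhere nature of the derivatives carefully: $x$ is only absolutely continuous, $\xi$ is only defined a.e., and $v$ depends on both; legitimizing the computation of $\dot\cE$ rests squarely on Theorem~\ref{Thm-existence}\ref{existence-6}\ref{existence-6:d} together with the fact that $u\in\cC^1$ with absolutely continuous derivative. A secondary subtlety is that, unlike the smooth case where claim \ref{item:fast_conv_smooth4c} yielded the sharper $o(1/t)$ rate through a bootstrap on the energy $\cG(t)$, here only $\cO(1/t)$ is claimed, so the refined Lyapunov function $\cE_\eps$ together with $\|v(t)\|=\cO(1)$ suffice and no further bootstrap is required.
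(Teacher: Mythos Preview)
Your proposal is correct and follows essentially the same Lyapunov approach as the paper. In fact your functions $\cE$ and $\cE_\eps$ are \emph{exactly} the paper's parametrized family $\cE_{\lambda,T}$ (see \eqref{grfast2}) evaluated at $\lambda=\alpha-1$ and $\lambda=\alpha-1-\eps$ respectively: the coefficient $t(t-\beta(\lambda+2-\alpha))$ reduces to your $\delta(t)$ and $\delta(t)+\eps\beta t$, and the extra quadratic $\lambda(\alpha-\lambda-1)\tfrac12\|x-x^\star\|^2$ reduces to your $\tfrac{\eps(\alpha-1-\eps)}{2}\|x-x^\star\|^2$. The paper simply carries a general $\lambda\in[2,\alpha-1]$ throughout rather than fixing two specific values.

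The only real difference is in how claim~\ref{gfast-conv2-3} is reached. The paper sets $\lambda=0$, so that $v_0(t)=t\dot u(t)$, and reads off $\|t\dot u(t)\|=\cO(1)$ directly from the Gronwall bound \eqref{energy-033}. You instead keep $\lambda=\alpha-1-\eps>0$, use the positive coefficient on $\|x(t)-x^\star\|^2$ together with a combined Gronwall to get $\sup_t\|x(t)-x^\star\|<+\infty$, and then recover $t\dot u(t)=v(t)-(\alpha-1)(x(t)-x^\star)$ bounded. Your route is arguably cleaner here: the paper's choice $\lambda=0$ lies outside the declared range $[2,\alpha-1]$, and for $\lambda=0$ the derivative \eqref{gdrfast2} contains the positive term $(2t-\beta(2-\alpha))(f(x(t))-\bar f)$, so monotonicity of $\cE_{0,T}$ is not free and tacitly relies on the integral estimate $\int t(f(x(t))-\bar f)\,dt<+\infty$ already established in~\ref{gfast-conv2-2}. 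Your argument avoids this bootstrap. Conversely, the paper's single-parameter presentation handles the borderline $\alpha=3$ without a separate case (the range degenerates to $\lambda=2$), whereas you defer that case to an external reference.
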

\begin{proof}
Let $(x,y):[t_0,+\infty[\to\cH\times\cH$ be a global strong solution of \eqref{eq:fos2}. Take $\alpha\geq3$ and $x^\star \in S$. Recall $\bar{f} \eqdef \min_{\cH} f$ and $g(t)=  e(t)+\beta \dot{e}(t)$. Our analysis relies on the non-smooth version of the Lyapunov function in \eqref{eq:Eeps}, which is defined for $\lambda\in[2,\alpha-1]$, as  $\cE_{\lambda,T}:[t_0,T] \to \R$ by  
\begin{equation}
\cE_{\lambda,T}(t)=
  t(t-\beta(\lambda+2-\alpha))(f(x(t))-\bar{f})+
  \frac{1}{2}\|v_\lambda(t)\|^2  
  +
  \lambda(\alpha-\lambda-1)\frac{1}{2}\|x(t)-x^\star\|^2 
  - \int_t^{T} \tau \dotp{v_\lambda(\tau)}{g(\tau)} d\tau, \label{grfast2}
\end{equation}
where $v_\lambda(t) \eqdef \lambda(x(\tau)-x^\star)+\tau\dot u(t)$, and $u$ is defined on $[t_0,+\infty[$ by \eqref{u0} and $\dot u$ is given by \eqref{guniter}.\\
 $\cE_{\lambda,T}(\cdot)$ is the sum of four terms, 
each of which is  absolutely continuous on $[t_0,T]$ for all $T>t_0$. Hence $\cE_{\lambda,T}$ is differentiable almost everywhere. We first differentiate each term of $\cE_{\lambda,T}$:
\begin{equation*}
\frac{d}{dt}\brac{t(t-\beta(\lambda+2-\alpha))(f(x(t))-\bar{f})}
= (2t-\beta(\lambda+2-\alpha))(f(x(t))-\bar{f})
+  t(t-\beta(\lambda+2-\alpha))\dotp{\xi(t)}{\dot x(t)}.
\end{equation*}
Using \eqref{dguniter0}, we have  
\begin{align*}
&\frac{d}{dt}\frac{1}{2}\|v_\lambda(t)\|^2 
= \dotp{\lambda(x(t)-x^\star)+t\dot u(t)}{
  \lambda\dot x(t)+\dot u(t)+t\ddot u(t)} \\
&=
  \dotp{\lambda(x(t)-x^\star)+t\dot u(t)}{
  (\lambda+1-\alpha)\dot x(t)-(t-\beta)\xi(t)-t g(t)} \\
&= 
  \lambda(\lambda+1-\alpha)\dotp{ x(t)-x^\star}{\dot x(t)} 
  -t(\alpha-\lambda-1)\|\dot x(t)\|^2 
  -\beta t(t-\beta)\|\xi(t)\|^2 \\
&-\lambda(t-\beta)\dotp{ x(t)-x^\star}{\xi(t)}
  -t(t-\beta(\lambda+2-\alpha))\dotp{\xi(t)}{\dot x(t)}
 -t \dotp{v_\lambda(t)}{g(t)}, \\
&\frac{d}{dt}\lambda(\alpha-\lambda-1)\frac{1}{2}\|x(t)-x^\star\|^2
=\lambda(\alpha-\lambda-1)\dotp{x(t)-x^\star}{\dot x(t)}, \qandq \\
&\frac{d}{dt}\pa{ - \int_t^{T} \tau \dotp{v_\lambda(\tau)}{g(\tau)} d\tau}
=  t \dotp{v_\lambda(t)}{g(t)} .
\end{align*}
By collecting these results, the perturbation terms cancel each other out. We get 
\begin{eqnarray}
\frac{d}{dt}\cE_{\lambda,T}(t)
&=&  (2t-\beta(\lambda+2-\alpha))(f(x(t))-\bar{f})
  -\lambda(t-\beta)\dotp{x(t)-x^\star}{\xi(t)} \nonumber \\
 && -t(\alpha-\lambda-1)\|\dot x(t)\|^2 
  -\beta t(t-\beta)\|\xi(t)\|^2,  \label{gdrfast2}
\end{eqnarray}
for almost all $t>t_0$. Since $\xi(t)\in\partial  f(x(t))$ for all $t>t_0$, we have 
$
\dotp{\xi(t)}{x(t)-x^\star}\geq f(x(t)-f(x^\star),
$
and we deduce from \eqref{gdrfast2}, that
\begin{equation} \label{grfastoche}
\frac{d}{dt}\cE_{\lambda,T}(t)\leq
  -((\lambda-2)t-\beta(\alpha-2))(f(x(t))-\bar{f})
  -t(\alpha-\lambda-1)\|\dot x(t)\|^2-\beta t(t-\beta)\|\xi(t)\|^2,
\end{equation}
for almost all $t\geq t_1=\max\pa{t_0,\beta}$. It follows that $\cE_{\lambda,T}$ is non-increasing on $[t_1,T]$. In particular, $ \cE_{\lambda,T}(t) \leq \cE_{\lambda,T}(t_1)$ for $t_1 \leq t \leq T$. This gives the existence of a constant $C$ 
 such that
\begin{align}\label{Gronw2}
\frac{1}{2}\|v_\lambda(t)\|^2 \leq C + \int_{t_0}^t \|v_\lambda(t)\| \| \tau g(\tau) \| d\tau .
\end{align}
Applying Lemma~\ref{lem:BrezisA5} to \eqref{Gronw2}, and using the integrability of $t\mapsto tg(t)$, it follows that
\begin{equation} \label{energy-033} 
\sup_{t \geq t_0} \|v_\lambda(t)\|  \leq  \sqrt{2C} + \int_{t_0}^\infty \| \tau g(\tau) \| d\tau< +\infty.
\end{equation} 
As a consequence, we can define the energy function
\begin{equation*}
 \cE_{\lambda} (t) \eqdef  \ t(t-\beta(\lambda+2-\alpha))(f(x(t))-\bar{f})+
  \frac{1}{2}\|v_\lambda(t)\|^2+
  \lambda(\alpha-\lambda-1)\frac{1}{2}\|x(t)-x^\star\|^2 \\
  - \int _t^{\infty} \tau \dotp{v_\lambda(t)}{g(\tau)} d\tau,
\end{equation*}
which has the same derivative as $\cE_{\lambda,T}$. Hence $\cE_{\lambda} (t) \leq \cE_{\lambda} (t_0)$. Combined with \eqref{energy-033}, this gives
\[
t(t-\beta(\lambda+2-\alpha))(f(x(t))-\bar{f})
\leq C + \sup_{t \geq t_0} \|v_\lambda(t) \|  \int _{t_0}^{\infty} \|\tau   g(\tau) \| d\tau < + \infty ,
\]
whence statement {\ref{gfast-conv2-1}}. Claim~{\ref{gfast-conv2-3}} is obtained by letting $\lambda=0$ in \eqref{energy-033}. Integration of \eqref{grfastoche} gives the integral estimates of {\ref{gfast-conv2-2}}, which completes the proof. \qed
\end{proof}

\subsubsection{Convergence of the trajectories and faster asymptotic rates}
Similar argument as in the smooth case (see Theorem~\ref{T:weak_convergence-smooth}), but now using the Lyapunov function \eqref{grfast2}, gives weak convergence of the trajectories of \eqref{eq:fos2}. Moreover, in the same vein as Theorem~\ref{fast_conv_smooth}, $o(\cdot)$ rates can also be obtained. We leave the details to the readers for the sake of brevity. 
\begin{theorem}\label{T:weak_convergence-nonsmooth}
Let $\alpha>3$. Let $f \in \Gamma_0(\cH)$ and assume that $S =\argmin f \neq \emptyset$. Suppose that  $e(\cdot)\in \cW^{1,1} (t_0,T; \cH)$ for all $T >t_0$, with
$\DS{\int_{t_0}^{+\infty} t \| e(t) + \beta \dot{e}(t)\| dt<+\infty}$. Then,
for any global strong solution $(x,y)$ of \eqref{eq:fos2}
\begin{enumerate}[label={\rm (\roman*)}]
\item $x(t)$ converges weakly, as $t\to+\infty$ to a point in $S$;
\smallskip
\item $f(x(t))- \min_{\cH} f =  o\pa{t^{-2}}$  
 and  $\|\dot x(t)+\beta\xi(t)\|  =  o\pa{t^{-1}}$ as $t\to+\infty$. 
\end{enumerate}
\end{theorem}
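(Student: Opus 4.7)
The plan is to follow the route used in the smooth case, namely Theorems~\ref{T:weak_convergence-smooth} and \ref{fast_conv_smooth}\ref{item:fast_conv_smooth4c}, but working throughout with the first-order formulation \eqref{eq:fos2}, the vector field $\dot u(t)$ defined in \eqref{guniter}, and the selection $\xi(t)\in\partial f(x(t))$ provided by Theorem~\ref{Thm-existence}\ref{existence-6}. The Lyapunov machinery already prepared in \eqref{grfast2} for $\lambda=\alpha-1$ will be the backbone.

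\textbf{Step 1: Existence of $\lim_{t\to+\infty}\|x(t)-x^\star\|$.} Fix $x^\star\in S$. Choose $\eps\in ]0,\alpha-3[$ small enough and set $\lambda=\alpha-1-\eps$. Inspired by \eqref{eq:Eeps}, define
\begin{equation*}
\cE_\eps(t) \eqdef t(t-\beta(\lambda+2-\alpha))(f(x(t))-\bar f)+\eps\beta t(f(x(t))-\bar f)+\tfrac12\|v_\lambda(t)\|^2+\tfrac{\eps(\alpha-1-\eps)}{2}\|x(t)-x^\star\|^2-\int_t^{+\infty}\tau\dotp{v_\lambda(\tau)}{g(\tau)}d\tau,
\end{equation*}
with $v_\lambda(t)=\lambda(x(t)-x^\star)+t\dot u(t)$ and $g(t)=e(t)+\beta\dot e(t)$. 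Redoing the computation \eqref{gdrfast2} with $\lambda=\alpha-1-\eps$ and then using the subgradient inequality $\dotp{\xi(t)}{x(t)-x^\star}\ge f(x(t))-\bar f$ exactly as in \eqref{grfastoche}, together with the choice of $\eps$ to absorb the extra $\eps\beta t$-term, yields
\begin{equation*}
\dot\cE_\eps(t)\le -((\eps-2-\alpha)\text{-controlled})t(f(x(t))-\bar f)-\eps t\|\dot x(t)\|^2-\beta t(t-\beta)\|\xi(t)\|^2\le 0
\end{equation*}
for $t$ large enough, where the coefficient in front of $(f(x(t))-\bar f)$ is non-negative by the same elementary inequality used at the end of the proof of Theorem~\ref{T:weak_convergence-smooth}. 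Therefore $\cE_\eps$ is non-increasing; applying Lemma~\ref{lem:BrezisA5} to the associated inequality for $\|v_\lambda\|^2$ as in \eqref{Gronw2}--\eqref{energy-033} gives boundedness of $\cE_\eps$, of $\|v_\lambda(t)\|$ and of $\|x(t)-x^\star\|$. Hence $\cE_\eps(t)$ converges as $t\to+\infty$, and so does $\cE_{\alpha-1}(t)$ (the case $\eps=0$ of Theorem~\ref{fastconv-thm}). Subtracting and using Theorem~\ref{fastconv-thm}\ref{gfast-conv2-1}-\ref{gfast-conv2-2}, the integrability of $tg$, and the bound $\sup_t\|\dot u(t)\|<+\infty$ from \eqref{gE:u_b}, we deduce that
\begin{equation*}
p(t)\eqdef \tfrac{\alpha-1}{2}\|x(t)-x^\star\|^2+t\dotp{\dot x(t)+\beta\xi(t)}{x(t)-x^\star}
\end{equation*}
has a limit as $t\to+\infty$. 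Setting $q(t)=\tfrac{\alpha-1}{2}\|x(t)-x^\star\|^2+(\alpha-1)\int_{t_0}^t\dotp{\xi(s)}{x(s)-x^\star}ds$, the convexity inequality together with Theorem~\ref{fastconv-thm}\ref{gfast-conv2-2} (which forces $\int_{t_0}^{+\infty}\dotp{\xi(s)}{x(s)-x^\star}ds$ to converge in $[0,+\infty]$; the finite alternative is shown exactly as for \eqref{eq:limiprodgradf} after writing $\dotp{\xi}{x-x^\star}=\dotp{\xi}{\dot u-\dot x}/\beta\cdot\beta+\dotp{\xi}{x-x^\star-\dot u+\dot x}$) gives $p(t)=q(t)+\tfrac{t}{\alpha-1}\dot q(t)-(\alpha-1)\int_{t_0}^t\dotp{\xi(s)}{x(s)-x^\star}ds$. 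Invoking \cite[Lemma~7.2]{APR1}, $\lim q(t)$ exists, whence $\lim\|x(t)-x^\star\|$ exists.

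\textbf{Step 2: Opial.} Every weak cluster point of $x(t)$ belongs to $S$ by lower semicontinuity of $f$ and Theorem~\ref{fastconv-thm}\ref{gfast-conv2-1}. Combined with Step 1, Opial's lemma (Lemma~\ref{Opial}) yields weak convergence of $x(t)$ to some $x_\infty\in S$.

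\textbf{Step 3: The $o$-rates.} Set $\phi(t)=\tfrac12\|\dot x(t)+\beta\xi(t)\|^2+(f(x(t))-\bar f)=\tfrac12\|\dot u(t)\|^2+(f(x(t))-\bar f)$. Mimicking the proof of Theorem~\ref{fast_conv_smooth}\ref{item:fast_conv_smooth4c}, we multiply the evolution law \eqref{dguniter0} by $t^2\dot u(t)$ and use the generalized chain rule of Theorem~\ref{Thm-existence}\ref{existence-6}\ref{existence-6:d} (together with $\dot u=\dot x+\beta\xi$ almost everywhere) to produce a pointwise inequality of the form
\begin{equation*}
\tfrac12 t^2\tfrac{d}{dt}\|\dot u(t)\|^2+(\alpha-1)t\|\dot u(t)\|^2+t^2\tfrac{d}{dt}(f(x(t))-\bar f)\le t\|g(t)\|\cdot t\|\dot u(t)\|+R(t),
\end{equation*}
where $R$ collects the integrable cross-terms in $\|\xi\|^2$, $\|\dot x\|^2$ coming from $\dotp{\dot u}{\dot u-\dot x}$ and is controlled via Theorem~\ref{fastconv-thm}\ref{gfast-conv2-2}. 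Integrating by parts and applying Lemma~\ref{lem:BrezisA5} yields $\sup_t t\|\dot u(t)\|<+\infty$. The function
\begin{equation*}
\cG(t)\eqdef t^2\phi(t)+(\alpha-1)\!\int_{t_0}^t\!\tau\|\dot u(\tau)\|^2d\tau-2\!\int_{t_0}^t\!\tau(f(x(\tau))-\bar f)d\tau-C\!\int_{t_0}^t\!\tau\|g(\tau)\|d\tau
\end{equation*}
is then non-increasing and bounded below by our integrability hypotheses and Theorem~\ref{fastconv-thm}\ref{gfast-conv2-2}, so $\lim\cG(t)$ exists and therefore $L=\lim t^2\phi(t)\ge 0$ exists. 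If $L>0$, dividing by $t$ and integrating produces a contradiction with $\int^{+\infty}\tfrac1t t^2\phi(t)dt<+\infty$ (again Theorem~\ref{fastconv-thm}\ref{gfast-conv2-2}). Hence $L=0$, giving simultaneously $f(x(t))-\bar f=o(t^{-2})$ and $\|\dot x(t)+\beta\xi(t)\|=o(t^{-1})$.

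The main obstacle is Step 3: unlike the smooth case, $t\mapsto\|\dot u(t)\|^2$ is only absolutely continuous and one must justify the multiplication by $t^2\dot u(t)$ purely from the a.e.~identity \eqref{dguniter0}; the key is the chain rule of Theorem~\ref{Thm-existence}\ref{existence-6}\ref{existence-6:d}, ensuring that $t\mapsto f(x(t))$ is differentiable a.e.~with the right formula, so that the integration by parts goes through exactly as in the $\cC^2$ setting.\qed
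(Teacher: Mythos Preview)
Your approach is essentially what the paper intends (the paper itself only says ``similar argument as in the smooth case, now using the Lyapunov function \eqref{grfast2} \ldots\ we leave the details to the readers''), and the overall architecture of Steps~1--3 is correct. A few slips should be fixed, though.

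\emph{Step~1.} The extra $\eps\beta t(f(x(t))-\bar f)$ in your $\cE_\eps$ is redundant: with $\lambda=\alpha-1-\eps$, the coefficient $t(t-\beta(\lambda+2-\alpha))$ already equals $\delta(t)+\eps\beta t$, and the remaining term $\lambda(\alpha-\lambda-1)\tfrac12\|x-x^\star\|^2$ in \eqref{grfast2} is precisely $\tfrac{\eps(\alpha-1-\eps)}{2}\|x-x^\star\|^2$. Thus $\cE_\eps$ should simply be $\cE_\lambda$ from \eqref{grfast2} with $\lambda=\alpha-1-\eps$. More importantly, your $p$--$q$ relation is off by a factor $\beta$: the subtraction $\cE_{\alpha-1-\eps}-\cE_{\alpha-1}$ gives $p(t)=\tfrac{\alpha-1}{2}\|x-x^\star\|^2+t\dotp{\dot x+\beta\xi}{x-x^\star}$, so to get $p=q+\tfrac{t}{\alpha-1}\dot q-\text{(convergent)}$ you must define $q$ with $(\alpha-1)\beta\int\dotp{\xi}{x-x^\star}$, not $(\alpha-1)\int\dotp{\xi}{x-x^\star}$. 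Finally, your justification for $\int_{t_0}^{+\infty}\dotp{\xi(s)}{x(s)-x^\star}ds<+\infty$ is garbled; the clean argument is the $\rho$-trick from Theorem~\ref{fast_conv_smooth}\ref{item:fast_conv_smooth4a} applied directly to \eqref{gdrfast2} with $\lambda=\alpha-1$, which in fact gives the stronger $\int t\dotp{\xi}{x-x^\star}dt<+\infty$.

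\emph{Step~3.} Your remainder $R(t)$ is not only ``terms in $\|\xi\|^2,\|\dot x\|^2$'': expanding $(\alpha-1)t\|\dot u\|^2-\alpha t\dotp{\dot u}{\dot x}$ produces a cross term $\alpha\beta t\dotp{\dot x}{\xi}=\alpha\beta t\tfrac{d}{dt}(f(x)-\bar f)$, which is not pointwise integrable but whose primitive is bounded via integration by parts and Theorem~\ref{fastconv-thm}\ref{gfast-conv2-1}--\ref{gfast-conv2-2}. Once you account for this, your $\cG$ argument works (with coefficient $\alpha-2$, not $\alpha-1$, after the integration by parts). Note also that you need not invoke Gronwall to get $\sup_t t\|\dot u(t)\|<+\infty$: this is already Theorem~\ref{fastconv-thm}\ref{gfast-conv2-3}.
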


\begin{remark}
In the special case where $e \equiv 0$ in \eqref{eq:fos2}, \ie, unperturbed case, Theorem~\ref{strong-conv-thm-implicit}, Theorem~\ref{fastconv-thm} and Theorem~\ref{T:weak_convergence-nonsmooth} recover the result of \cite[Section~4]{APR1}. In a nutshell, our result demonstrates that the properties of the unperturbed system are preserved under reasonable integrability conditions on the errors.
\end{remark}

\subsection{Implicit Hessian Damping}\label{sec:nonsmooth-implicit}
As we have already discussed in the smooth case (see Section~\ref{s:discussion}), the analysis of the convergence properties of the system with implicit Hessian driven damping heavily relies on Lipschitz continuity of the gradient. As shown above, such a property was not needed to analyze system \eqref{eq:fos2}. Therefore, the study of the convergence properties for the non-smooth system \eqref{eq:fos2_implicit} (even without perturbations) is an open challenging topic.
\section{Numerical Experiments}\label{s:num}
To support our theoretical claims, we consider numerical examples in $\cH=\R^2$ with two real-valued functions: 
\begin{enumerate}[label=$\bullet$]
\item The first one is given by $f(x_1,x_2)) = (x_1-1)^4+(x_2-5)^2$. This function is obviously convex (but not strongly so) and smooth, and has a unique minimizer  at $(1,5)$. For this function, we consider the continuous time dynamical system \eqref{eq:origode_a} with parameters $(\alpha,\beta)=(3.1,1)$, and \eqref{eq:odetwo_a} with parameters $(\alpha,\gamma,\beta)=(3.1,1,1)$. 

\item The second example we consider is with the convex non-smooth function $f(x_1,x_2)=(x_1-1)^4+(x_2-5)^2+0.1 (|x_1|+|x_2|)$. For this function, we use the continuous time non-smooth system \eqref{eq:fos2} with parameters $(\alpha,\beta)=(3.1,1)$. Although we have no theoretical guarantee for system \eqref{eq:fos2_implicit}, we do report the corresponding numerical results with parameters $(\alpha,\gamma,\beta)=(3.1,1,1)$. 
\end{enumerate}
\begin{figure}[htbp]
\subfigure[Explicit Hessian damping: smooth function]{\includegraphics[width=0.85\textwidth]{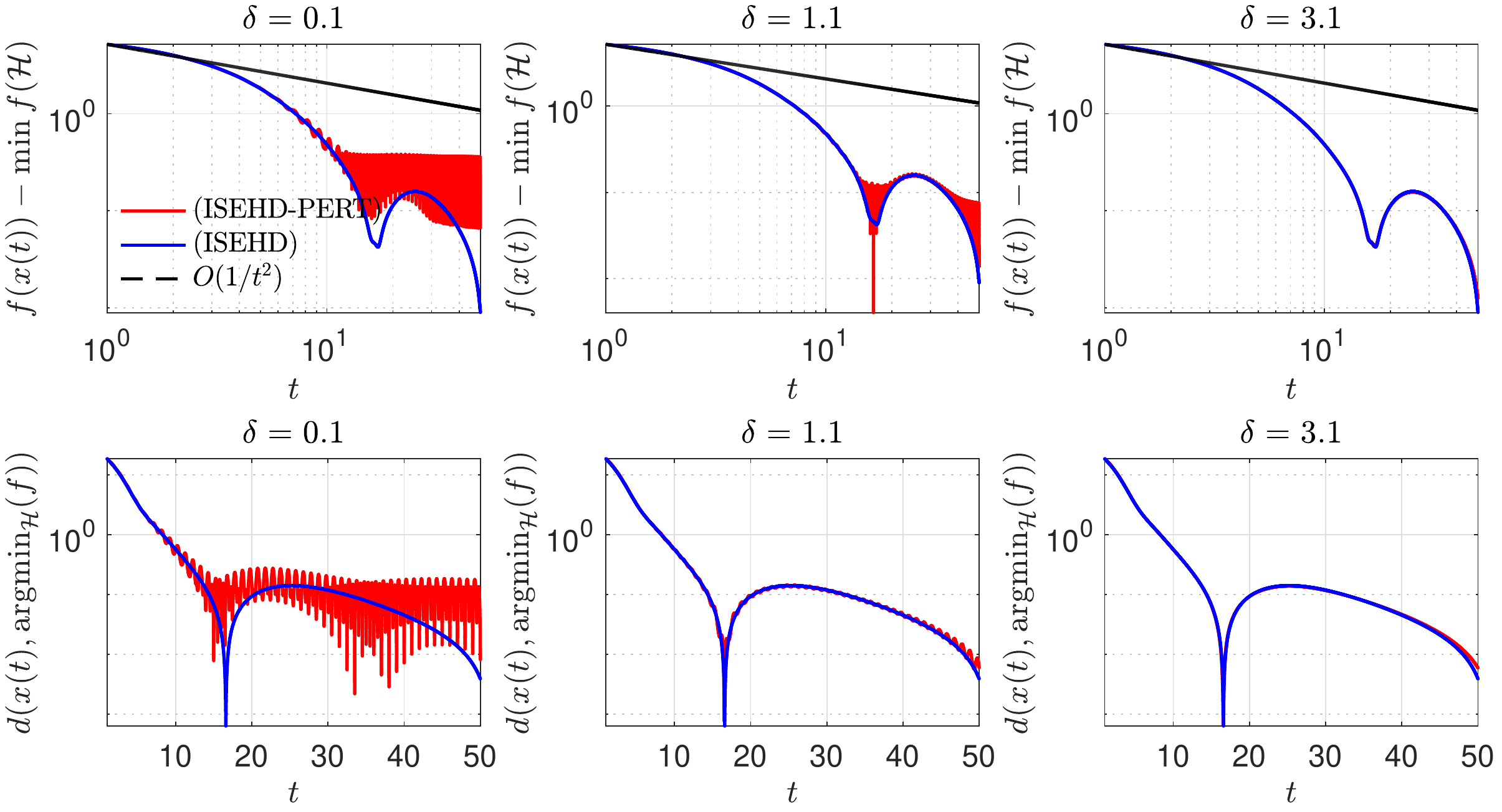}}\\
\subfigure[Implicit Hessian damping: smooth function]{\includegraphics[width=0.85\textwidth]{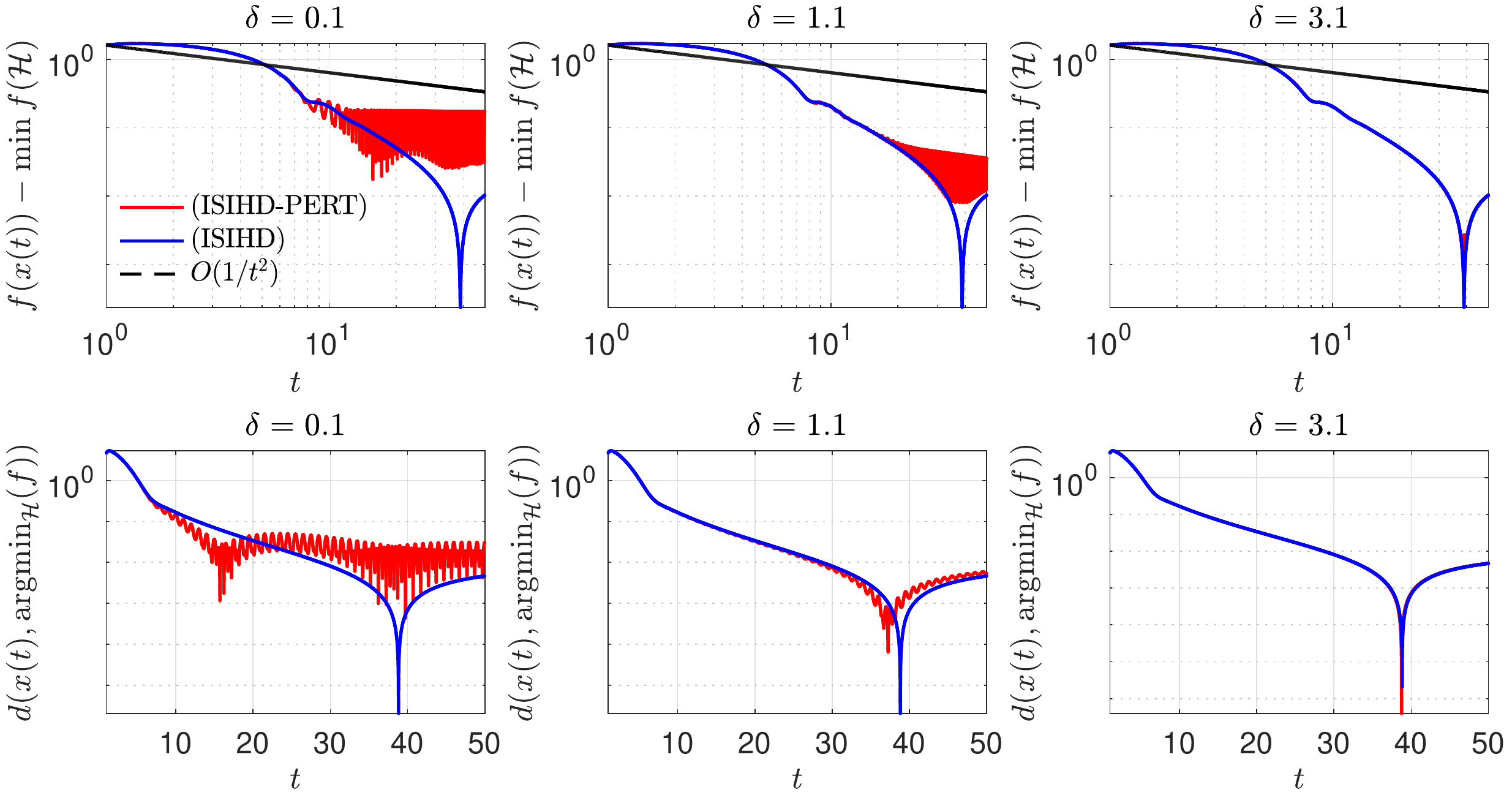}}
\caption{Example on a smooth function: Evolution of the objective error and distance to the minimizer as a function of $t$ for different error decay exponents.}
\label{fig:smoothhessianpert}    
\end{figure}
For both examples, we take as an exogenous perturbation
$$
e(t) = \displaystyle{\frac{\cos(2\pi t)}{t^{\delta}}}  \mbox{  with } \delta \in \{0.1, 1.1, 3.1\}.
$$
All systems are solved numerically with a Runge-Kutta adaptive method in MATLAB on the time interval $[1,50]$ with initial data $(x_0,\dot{x}_0)= (-10,20,5,-5)$. The results are displayed in Figure~\ref{fig:smoothhessianpert} and Figure~\ref{fig:nonsmoothhessianpert}. 

Let us first comment on the results for the smooth function. For $\delta=3.1$, all required moment assumptions on the errors are fulfilled (for the explicit Hessian, the term $\dot{e}$ is dominated by $e$ and can then be discarded). Hence the fast rates predicted by Theorem~\ref{fast_conv_smooth}\ref{item:fast_conv_smooth1} and Theorem~\ref{th:int}\ref{th:intclaim1} as well as convergence of the trajectories (see Theorem~\ref{T:weak_convergence-smooth} and Theorem~\ref{T:weak_convergence-smooth-implicit}) hold true. For the value $\delta=0.1$, since the error is not even integrable, neither the convergence of the objective value nor that of the trajectories is ensured, with large oscillations appearing. The implicit Hessian damping seems also less stable as anticipated from our discussion in Section~\ref{s:discussion}. For $\delta=1.1$, though there is no convergence guarantee for the trajectory, the objective value for \eqref{eq:origode_a} decreases but at a rate which is dominated by the error decrease. This can be explained in light of the proof of Theorem~\ref{fast_conv_smooth}\ref{item:fast_conv_smooth1}, where  a close inspection of \eqref{eq:boundvexplicit} and \eqref{eq:boundobjexplicit} shows that the bound on the objective error decomposes as
\begin{center}
$
f(x(t))-\bar{f} \leq \cO\pa{\frac{1}{t^2}} + \frac{C\pa{\DS{\int_{t_0}^t\tau \norm{e(\tau)}} d\tau}^2}{t^2} .
$ 
\end{center}
For $\delta \in ]1,2]$, the second term indeed dominates the first one and decreases at the slower rate $t^{-2(\delta-1)}$. This confirms the known rule that there is a trade-off between fast convergence of the methods and their robustness to perturbations.
Similar observations remain true for the non-smooth function with system \eqref{eq:fos2} where we now invoke Theorem~\ref{fastconv-thm} and Theorem~\ref{T:weak_convergence-nonsmooth}. As for system \eqref{eq:fos2_implicit}, it seems that it has a behaviour similar to what we observed in the smooth case for system \eqref{eq:odetwo_a}. As we argued in Section~\ref{sec:nonsmooth-implicit}, supplementing the numerical observations for system \eqref{eq:fos2_implicit} with theoretical guarantees is an open problem that we leave to a future work. 

\begin{figure}[htbp]
\subfigure[Explicit Hessian damping: non-smooth function]{\includegraphics[width=0.85\textwidth]{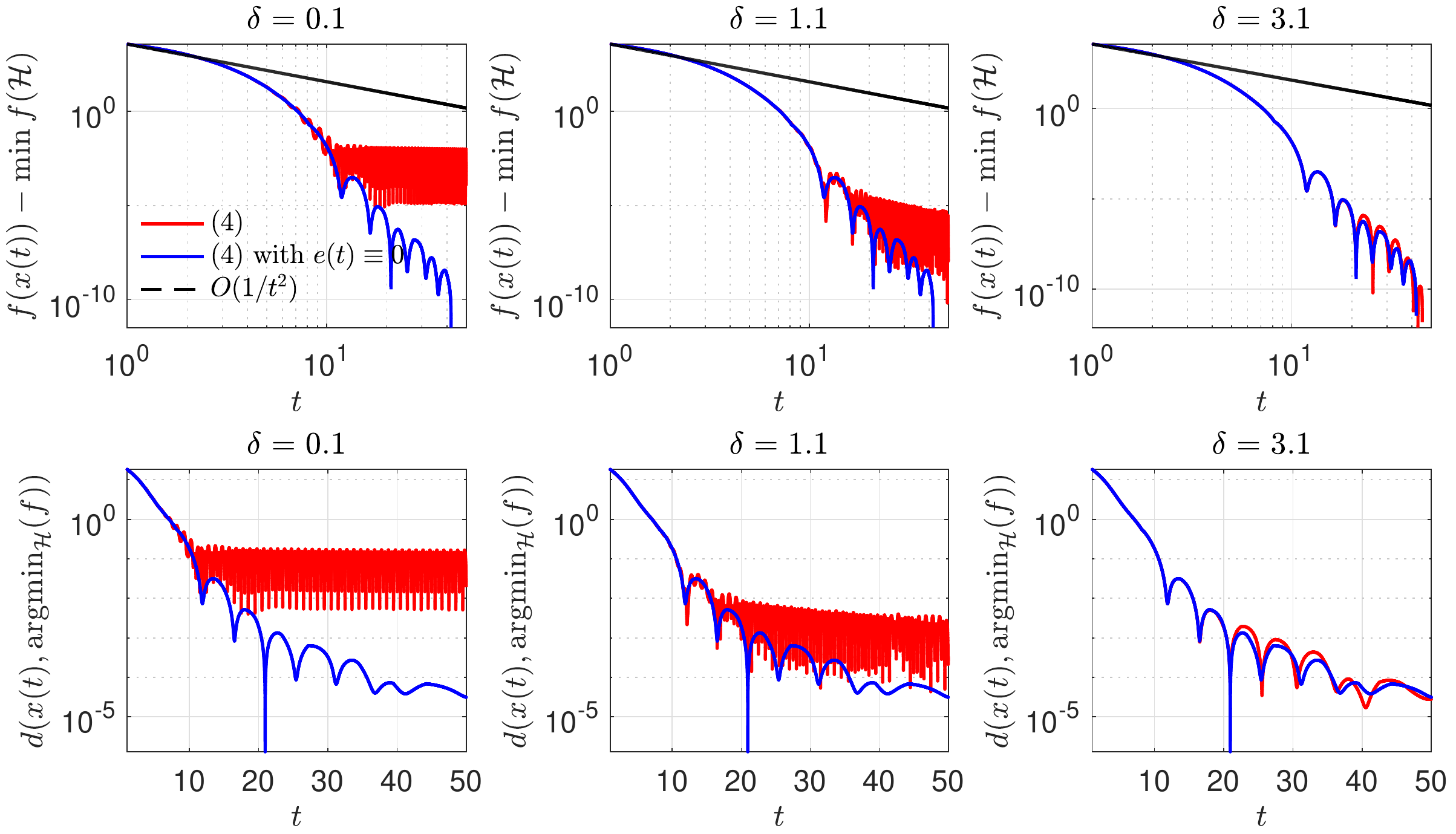}}\\
\subfigure[Implicit Hessian damping: non-smooth function]{\includegraphics[width=0.85\textwidth]{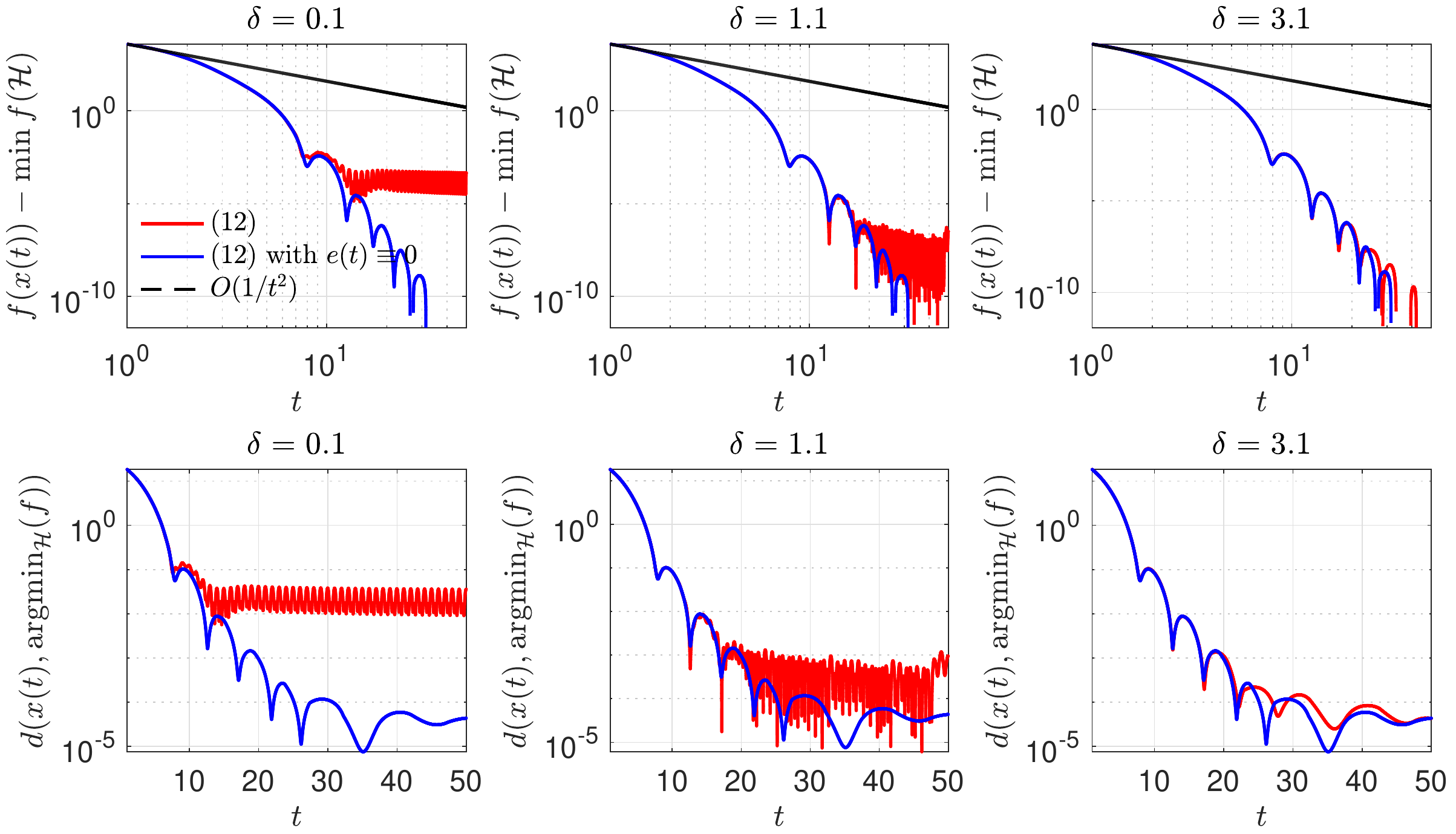}}
\caption{Example on a non-smooth function: Evolution of the objective error and distance to the minimizer as a function of $t$ for different error decay exponents.}
\label{fig:nonsmoothhessianpert}    
\end{figure}

\section{Conclusion and Perspectives}\label{s:conc}

The introduction of the correction term attached to the  damping driven by the Hessian in first-order accelerated optimization algorithms makes it possible to considerably dampen the oscillations in the trajectory.
The study of the robustness of these algorithms with respect to error perturbations is crucial for their further development in a stochastic framework.
Our systematic study of these questions for the dynamics underlying these algorithms is a fundamental first step in this direction. We paid particular attention to the explicit and the implicit forms of the Hessian driven damping, showing several advantages of the explicit form.
Our study concerns the dynamics with damping driven by the Hessian within the framework of the Nesterov acceleration gradient method. It shows that the convergence of the values still holds when the error terms satisfy an appropriate integrability condition, and fast convergence is satisfied when the (first  or second-order) moment of the error is finite.
Indeed, as a general rule, there is a balance between the rate of convergence of the methods and their robustness with respect to error disturbances.
An interesting technique studied in \cite{AA1,AA2} is the introduction of a dry friction term. This makes it possible to have errors which do not necessarily go to zero, they must not exceed a certain threshold, but on the other hand we only obtain an approximate solution.
Finding the right balance between the convergence rate and robustness is an important issue that should be the subject of further study.
Another important aspect of our study is the fact that several results are valid in the case of a non-smooth function. This opens the door to the study of similar topics with respect to structured composite optimization problems involving a non-smooth term.
These are some of the many facets of these flexible dynamics and algorithms which, in the unperturbed case, have been applied in various fields including PDE's and mechanical shocks \cite{AMR}, deep learning \cite{CBFP}, non-convex optimization \cite{ABC}, monotone inclusions \cite{AL1,AL2} to mention a few important applications.


\appendix
\section{Auxiliary results}
Let us first recall the continuous form of the Opial's Lemma \cite{Op}, a key ingredient to establish convergence of the trajectories.
\begin{lemma}\label{Opial} Let $S$ be a nonempty subset of $\cH$ and let {$x:[t_0,+\infty[\to \cH$}. Assume that 
\begin{enumerate}[label={\rm (\roman*)}]
\item for every $z\in S$, $\lim_{t\to\infty}\|x(t)-z\|$ exists;
\item every weak sequential cluster point of $x(t)$, as $t\to\infty$, belongs to $S$.
\end{enumerate}
Then $x(t)$ converges weakly as $t\to\infty$ to a point in $S$.
\end{lemma}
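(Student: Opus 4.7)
The plan is to follow the classical Opial argument. First I would use assumption (i) to observe that, fixing any $z\in S$, the map $t\mapsto \|x(t)-z\|$ is bounded (being convergent), hence the trajectory $x(\cdot)$ is bounded in $\cH$. Since bounded sets in a Hilbert space are relatively weakly sequentially compact, along any sequence $t_n\to+\infty$ one can extract a subsequence along which $x(t_{n_k})$ converges weakly, so assumption (ii) guarantees that the set of weak sequential cluster points is nonempty and contained in $S$.

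The core of the proof is then to show uniqueness of the weak cluster point. Suppose, by way of contradiction, that there exist two sequences $t_n,s_n\to+\infty$ with $x(t_n)\rightharpoonup x_1$ and $x(s_n)\rightharpoonup x_2$, where $x_1,x_2\in S$ by (ii). The key step is to expand, for any two points $z_1,z_2\in S$,
\begin{equation*}
\|x(t)-z_1\|^2-\|x(t)-z_2\|^2 \;=\; \|z_1\|^2-\|z_2\|^2 -2\dotp{x(t)}{z_1-z_2}.
\end{equation*}
By assumption (i), the left-hand side converges as $t\to+\infty$, so the scalar function $t\mapsto \dotp{x(t)}{z_1-z_2}$ admits a limit $\ell(z_1,z_2)\in\R$.

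Specializing to $z_1=x_1$, $z_2=x_2$, passing to the limit along $t_n$ gives $\ell(x_1,x_2)=\dotp{x_1}{x_1-x_2}$, while passing along $s_n$ gives $\ell(x_1,x_2)=\dotp{x_2}{x_1-x_2}$. Subtracting these two identities yields $\|x_1-x_2\|^2=0$, hence $x_1=x_2$. Therefore $x(t)$ has a unique weak sequential cluster point $x_\infty\in S$, and combined with the earlier weak sequential compactness this forces $x(t)\rightharpoonup x_\infty$ as $t\to+\infty$. The only subtle point is the passage from uniqueness of sequential cluster points to the full weak convergence of the net $x(t)$ along the continuous parameter $t$, which is standard: if $x(t)\not\rightharpoonup x_\infty$, one can find $\eps>0$, a vector $h\in\cH$ and $t_n\to+\infty$ with $|\dotp{x(t_n)-x_\infty}{h}|\geq\eps$, contradicting the uniqueness of cluster points just established.
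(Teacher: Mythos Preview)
Your proof is correct and is exactly the classical Opial argument. Note, however, that the paper does not actually supply a proof of this lemma: it is merely recalled as a known auxiliary result, with a citation to Opial's original paper~\cite{Op}. So there is no ``paper's own proof'' to compare against; your write-up is precisely the standard proof one finds in the literature for this statement.
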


\begin{lemma}[{\cite[Lemma~7.3]{APR1}}]
\label{L:int_bounded}
Let $\tau,p>0$ and let $\psi:]\tau,+\infty[\to\R$ be $\cC^2(]\tau,+\infty[)$ and bounded from below. Then, 
\[
\inf_{t>\tau}\int_{\tau}^{t}\frac{\dot\psi(s)}{s^p}\,ds>-\infty\quad\hbox{and}\quad \inf_{t>\tau}\int_{\tau}^{t}\frac{\ddot\psi(s)}{s^p}\,ds-\frac{\dot\psi(t)}{t^p}>-\infty.
\]
\end{lemma}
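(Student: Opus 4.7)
The plan is to derive both bounds via a single integration by parts each, exploiting that $s\mapsto s^{-p}$ is decreasing on $]0,+\infty[$ so that boundary contributions at $t$ go in the favourable direction.

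For the first estimate, I would start from the identity
\[
\frac{d}{ds}\pa{\frac{\psi(s)}{s^p}} = \frac{\dot\psi(s)}{s^p} - p\,\frac{\psi(s)}{s^{p+1}},
\]
which upon integration on $[\tau,t]$ yields
\[
\int_\tau^t \frac{\dot\psi(s)}{s^p}\,ds = \frac{\psi(t)}{t^p} - \frac{\psi(\tau)}{\tau^p} + p\int_\tau^t \frac{\psi(s)}{s^{p+1}}\,ds.
\]
Pick $M\geq 0$ such that $\psi\geq -M$ on $[\tau,+\infty[$. Then for all $t\geq\tau$ the first term on the right-hand side satisfies $\frac{\psi(t)}{t^p}\geq -\frac{M}{t^p}\geq -\frac{M}{\tau^p}$, the second is the fixed constant $-\psi(\tau)/\tau^p$, and the third is bounded below by
\[
p\int_\tau^t \frac{\psi(s)}{s^{p+1}}\,ds \geq -pM\int_\tau^t s^{-p-1}\,ds = M\pa{t^{-p}-\tau^{-p}} \geq -\frac{M}{\tau^p}.
\]
Summing these three contributions produces a lower bound independent of $t$, which gives the first claim.

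For the second estimate, I would apply integration by parts once more to obtain
\[
\int_\tau^t \frac{\ddot\psi(s)}{s^p}\,ds = \frac{\dot\psi(t)}{t^p} - \frac{\dot\psi(\tau)}{\tau^p} + p\int_\tau^t \frac{\dot\psi(s)}{s^{p+1}}\,ds,
\]
so that after rearrangement
\[
\int_\tau^t \frac{\ddot\psi(s)}{s^p}\,ds - \frac{\dot\psi(t)}{t^p} = -\frac{\dot\psi(\tau)}{\tau^p} + p\int_\tau^t \frac{\dot\psi(s)}{s^{p+1}}\,ds.
\]
The first term on the right is a fixed constant, and the remaining integral is uniformly bounded below in $t>\tau$ by applying the first part of the lemma with exponent $p+1>0$ to the same function $\psi$. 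Combining these two lower bounds concludes the proof.

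There is essentially no obstacle: the whole argument amounts to two integrations by parts, together with the monotonicity of $s\mapsto s^{-p}$ that ensures the boundary contributions at $t$ can be controlled solely in terms of the uniform lower bound on $\psi$ and the initial value $\tau$.
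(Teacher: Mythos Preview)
Your proof is correct and is the standard argument: two integrations by parts, with the lower bound on $\psi$ controlling all resulting terms. The paper does not give its own proof of this lemma but cites it from \cite[Lemma~7.3]{APR1}, so there is nothing to compare against; your argument is precisely the one that appears in that reference.

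One minor remark: the statement places $\psi$ only in $\cC^2(]\tau,+\infty[)$ (open at $\tau$), yet your proof evaluates $\psi(\tau)$ and $\dot\psi(\tau)$. This is a defect of the lemma's formulation rather than of your argument; in every application within the paper the function in question is actually $\cC^2$ on the closed interval $[\tau,+\infty[$, so your use of the boundary values is justified in practice.
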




\begin{lemma}[{\cite[Lemma~A.5]{Bre1}}]\label{lem:BrezisA5} 
Let $m : [t_0; T] \to [0,+\infty[$ be integrable. Suppose $w : [t_0,T] \to \R$ is continuous and
\[
\frac{1}{2} w(t)^2 \leq \frac{1}{2} c^2 + \int_{t_0}^t m(s)w(s) ds,
\]
for some $c\geq 0$ and for all $t \in [t_0,T]$. Then
\[
|w(t)| \leq c + \int_{t_0}^t m(s) ds, \qquad t \in [t_0,T] .
\]
\end{lemma}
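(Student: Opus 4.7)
The plan is to introduce the right-hand side of the hypothesis as an auxiliary scalar function and reduce the inequality to a linear differential inequality via a ``square root trick'', with an $\varepsilon$-regularization to rule out a possible vanishing denominator.

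Concretely, I would set
\[
\phi(t) \eqdef \tfrac{1}{2}c^2 + \int_{t_0}^t m(s)w(s)\,ds.
\]
Since $m$ is integrable and $w$ is continuous, $mw$ is integrable on $[t_0,T]$, so $\phi$ is absolutely continuous with $\phi(t_0)=c^2/2$ and $\dot\phi(t)=m(t)w(t)$ for almost every $t$. The hypothesis reads $\tfrac{1}{2}w(t)^2 \leq \phi(t)$, which already forces $\phi\geq 0$ and $|w(t)|\leq \sqrt{2\phi(t)}$, so it remains to bound $\sqrt{2\phi(t)}$.

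The main technical obstacle is that $\phi$ may vanish at certain points, in which case the formal computation $\frac{d}{dt}\sqrt{2\phi}=\dot\phi/\sqrt{2\phi}$ breaks down. To bypass this, I would fix $\varepsilon>0$ and work with $\phi_\varepsilon \eqdef \phi+\varepsilon>0$. Then $\sqrt{2\phi_\varepsilon}$ is a composition of the globally Lipschitz function $u\mapsto \sqrt{2u}$ (on $[\varepsilon,+\infty[$) with an absolutely continuous function, hence absolutely continuous on $[t_0,T]$, with
\[
\frac{d}{dt}\sqrt{2\phi_\varepsilon(t)} = \frac{\dot\phi(t)}{\sqrt{2\phi_\varepsilon(t)}} = \frac{m(t)w(t)}{\sqrt{2\phi_\varepsilon(t)}} \leq \frac{m(t)|w(t)|}{\sqrt{2\phi_\varepsilon(t)}} \leq \frac{m(t)\sqrt{2\phi(t)}}{\sqrt{2\phi_\varepsilon(t)}} \leq m(t)
\]
for almost every $t\in[t_0,T]$, where the last two inequalities use $|w|\leq \sqrt{2\phi}$ and $\phi\leq \phi_\varepsilon$ respectively.

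To conclude, I would integrate this inequality from $t_0$ to $t$ using absolute continuity to get $\sqrt{2\phi_\varepsilon(t)} \leq \sqrt{c^2+2\varepsilon} + \int_{t_0}^t m(s)\,ds$, then let $\varepsilon\to 0^+$ (by monotone or dominated convergence since $\phi_\varepsilon \downarrow \phi$) to obtain $\sqrt{2\phi(t)} \leq c + \int_{t_0}^t m(s)\,ds$. Combining with $|w(t)| \leq \sqrt{2\phi(t)}$ yields the claimed estimate. The whole argument is essentially one line once the regularization is in place; the only subtlety is justifying the chain rule for $\sqrt{2\phi_\varepsilon}$, which is standard for absolutely continuous functions composed with a Lipschitz map on a compact interval.
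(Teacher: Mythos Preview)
Your proof is correct and is essentially the standard argument (the one in Br\'ezis's book): define the right-hand side as $\phi$, regularize by $\varepsilon$ to avoid division by zero, differentiate $\sqrt{2\phi_\varepsilon}$, bound using $|w|\leq\sqrt{2\phi}\leq\sqrt{2\phi_\varepsilon}$, integrate, and pass to the limit. The paper does not give its own proof of this lemma; it simply cites \cite[Lemma~A.5]{Bre1}, so there is nothing to compare against beyond noting that your argument matches the classical one.
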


\begin{lemma}\label{lem:timerescale}
Let $\beta$ be a positive function on $[t_0,+\infty[$ such that $\beta \not\in L^1(t_0,+\infty;\R_+)$. Then, the differential inclusion 
\begin{equation}
\dot{z}(t) + \beta(t) \partial \Phi(z(t)) + F(t,z(t)) \ni 0 ,
\end{equation}
is equivalent to
\begin{equation}
\dot{w}(s) +  \partial \Phi (w(s)) + G(s, w(s)) \ni 0,
\end{equation}
with 
\[
G(s, w(s))= \frac{1}{\beta(\tau(s))}F(\tau(s),w(s)), \quad t = \tau(s), \qandq \beta(\tau(s))\dot{\tau}(s)=1 .
\]
\end{lemma}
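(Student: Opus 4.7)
The plan is a direct change of variable in time. First I would make rigorous the construction of the rescaling $t=\tau(s)$. Since $\beta>0$, the scalar ODE $\dot{\tau}(s)=1/\beta(\tau(s))$ with $\tau(s_0)=t_0$ has a unique, strictly increasing, $\cC^1$ solution on its maximal interval of existence. Because $\tau$ is increasing, its inverse $s=\sigma(t)$ satisfies $\dot{\sigma}(t)=\beta(t)$, and $\sigma(t)-s_0=\int_{t_0}^t \beta(r)\,dr$. The hypothesis $\beta\notin L^1(t_0,+\infty;\R_+)$ then forces $\sigma(t)\to+\infty$ as $t\to+\infty$, so $\tau$ is a $\cC^1$-diffeomorphism from some $[s_0,+\infty[$ onto $[t_0,+\infty[$. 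This is the only place the non-integrability hypothesis is used, and it guarantees that the rescaled dynamics is still defined on a half-line.

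Next, given a (strong) solution $z:[t_0,+\infty[\to\cH$ of the first differential inclusion, define $w(s)\eqdef z(\tau(s))$. By the chain rule, $w$ is absolutely continuous on compact subintervals and
\[
\dot{w}(s)=\dot{\tau}(s)\,\dot{z}(\tau(s))=\frac{1}{\beta(\tau(s))}\dot{z}(\tau(s))
\]
for almost every $s$. Multiplying the inclusion $\dot{z}(\tau(s))+\beta(\tau(s))\partial\Phi(z(\tau(s)))+F(\tau(s),z(\tau(s)))\ni 0$ by $1/\beta(\tau(s))>0$ (which preserves the inclusion since the cone structure of the convex subdifferential is invariant under multiplication by a positive scalar) yields
\[
\dot{w}(s)+\partial\Phi(w(s))+\frac{1}{\beta(\tau(s))}F(\tau(s),w(s))\ni 0,
\]
which is exactly the second inclusion with $G$ as defined.

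For the converse direction, set $z(t)\eqdef w(\sigma(t))$; the same chain rule argument combined with $\dot{\sigma}(t)=\beta(t)$ recovers the original inclusion. Together with the fact that $\tau$ and $\sigma$ are mutually inverse absolutely continuous bijections, this establishes the claimed equivalence at the level of trajectories and the correspondence of initial data via $w(s_0)=z(t_0)$.

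The only genuinely delicate step is checking that the change of time preserves the notion of strong solution (absolute continuity on compacts, pointwise a.e.\ validity of the inclusion, selection $\xi(t)\in\partial\Phi(z(t))$ turning into $\xi(\tau(s))\in\partial\Phi(w(s))$ at a.e.\ $s$). This follows because $\tau$ is a bi-Lipschitz homeomorphism on every compact subinterval (thanks to $\beta$ being continuous and bounded away from $0$ on such compacts, which we may assume in our setting since $\inf_{t\in[t_0,+\infty[}\beta(t)>0$), so it sends null sets to null sets and preserves absolute continuity; the rest is algebraic. I would therefore keep the proof short, pointing out these chain-rule computations and the role of the non-integrability hypothesis in ensuring the rescaled horizon is still $+\infty$.
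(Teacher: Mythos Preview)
Your proposal is correct and follows essentially the same approach as the paper: both construct the time reparametrization $t=\tau(s)$ via $\beta(\tau(s))\dot\tau(s)=1$, identify its inverse as the primitive $\sigma(t)=s_0+\int_{t_0}^t\beta(r)\,dr$, and use non-integrability of $\beta$ to ensure the rescaled horizon is $+\infty$. Your write-up is actually more careful than the paper's in spelling out the converse direction and the preservation of the strong-solution structure; the only cosmetic quibble is that the phrase ``cone structure of the convex subdifferential'' is misleading (the subdifferential is not a cone), and what you are really using is that dividing the pointwise equality $\dot z+\beta\xi+F=0$ with $\xi\in\partial\Phi(z)$ by $\beta>0$ leaves the same selection $\xi$ in $\partial\Phi(w)$.
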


\begin{proof}
Make the change of time variable
$
t = \tau(s) \qandq z(t)= z \circ \tau(s) = w(s) .
$
We then have 
\begin{equation*}
\frac{1}{\beta(\tau(s))\dot{\tau}(s)}\dot{w}(s) +  \partial \Phi (w(s)) + \frac{1}{\beta(\tau(s))}F(\tau(s),w(s)) \ni 0 ,
\end{equation*}
Choose $\tau(\cdot)$ such that 
\begin{equation}\label{basic1}
\beta(\tau(s))\dot{\tau}(s)=1 .
\end{equation}
Introduce a primitive of $\beta$,
$
p(t) = \int_{t_0}^t  \beta(r)dr
$
Therefore, \eqref{basic1} can be equivalently written
\begin{center}
$
\frac{d}{ds} p(\tau (s))=1 \iff p(\tau (s)) = \int_{t_0}^{\tau(s)}  \beta(r)dr = s + C,
$
\end{center}
for any constant $C$. Thus, $\tau$ defines a change of variable if and only if 
$
\int_{t_0}^{+\infty}  \beta(r)dr = +\infty ,
$
hence our assumption on $\beta$. \qed
\end{proof}

\begin{lemma}\label{basic-int} 
Take $t_0 >0$, and let $f \in L^1 (t_0 , +\infty;\R)$ be continuous. Consider a nondecreasing function $\varphi:[t_0,+\infty[\to\R_+$ such that $\lim\limits_{t\to+\infty}\varphi(t)=+\infty$. Then, 
$
\lim_{t \to + \infty} \frac{1}{\varphi(t)} \int_{t_0}^t \varphi(s)f(s)ds =0.
$
\end{lemma}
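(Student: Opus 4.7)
The statement is the continuous analogue of Kronecker's lemma. My plan is to reduce it to the fact that the tail integral of $f$ vanishes, then transfer the weight $\varphi$ onto that tail via a Stieltjes integration by parts. Concretely, set
\[
F(t) \eqdef \int_t^{+\infty} f(s)\,ds,
\]
which is well defined, continuous, bounded, and satisfies $F(t)\to 0$ as $t\to +\infty$ since $f\in L^1(t_0,+\infty;\R)$.

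The main step is an integration by parts against the Lebesgue--Stieltjes measure $d\varphi$ associated with the nondecreasing function $\varphi$ (which is the only delicate point, since $\varphi$ is not assumed differentiable). Writing $f(s)\,ds = -dF(s)$ and applying Fubini to
$\varphi(s)=\varphi(t_0)+\int_{t_0}^s d\varphi(r)$, I would derive the identity
\[
\int_{t_0}^t \varphi(s)f(s)\,ds
= \varphi(t_0)F(t_0) - \varphi(t)F(t) + \int_{t_0}^t F(r)\,d\varphi(r),
\]
valid for every $t>t_0$.

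From here the conclusion follows by a standard $\eps/2$ splitting. Given $\eps>0$, use $F(s)\to 0$ to pick $T>t_0$ with $|F(s)|\le \eps$ for all $s\ge T$, and split the last integral at $T$:
\[
\left|\frac{1}{\varphi(t)}\int_{t_0}^t F(r)\,d\varphi(r)\right|
\le \frac{\|F\|_\infty\bigl(\varphi(T)-\varphi(t_0)\bigr)}{\varphi(t)}
+ \eps\,\frac{\varphi(t)-\varphi(T)}{\varphi(t)}.
\]
Dividing the displayed identity by $\varphi(t)$, the first term tends to $0$ because $\varphi(t)\to +\infty$, the term $-F(t)$ tends to $0$, the first piece of the split also vanishes in the limit, and the second piece is bounded by $\eps$. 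Taking $\limsup_{t\to+\infty}$ and then letting $\eps\downarrow 0$ concludes the proof.

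The main obstacle, as noted, is justifying the integration by parts without regularity of $\varphi$. I would handle this cleanly via the Lebesgue--Stieltjes formulation (the measure $d\varphi$ is a positive Radon measure on $[t_0,+\infty[$, and Fubini applies to the nonnegative integrand appearing after absolute values are taken, hence to the signed integrand since $f\in L^1$). Alternatively, one can approximate $\varphi$ by a $\cC^1$ nondecreasing $\varphi_n$ with $\varphi_n\to\varphi$ uniformly on compact sets and $\varphi_n(t)\to\varphi(t)$, apply the classical formula to $\varphi_n$, and pass to the limit, but the Stieltjes route is shorter.
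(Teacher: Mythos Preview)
Your proof is correct. The Stieltjes integration-by-parts identity you state is valid (one may, without loss of generality, replace $\varphi$ by its right-continuous modification, since $\varphi$ has at most countably many jumps and the integral $\int\varphi(s)f(s)\,ds$ against Lebesgue measure is unaffected), and your $\eps$-splitting then concludes cleanly.

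However, the paper's argument is considerably more elementary and bypasses Stieltjes measures altogether. It simply splits the integral at a point $t_\eps$ chosen so that $\int_{t_\eps}^\infty |f(s)|\,ds\le\eps$, and then uses the monotonicity of $\varphi$ directly via the crude bound $\varphi(s)/\varphi(t)\le 1$ for $s\le t$:
\[
\abs{\frac{1}{\varphi(t)}\int_{t_0}^t\varphi(s)f(s)\,ds}
\le \frac{1}{\varphi(t)}\int_{t_0}^{t_\eps}\varphi(s)|f(s)|\,ds + \int_{t_\eps}^t|f(s)|\,ds.
\]
The first term vanishes as $t\to+\infty$ and the second is at most $\eps$. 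Your route mirrors the classical proof of the discrete Kronecker lemma and makes that connection explicit, but here the extra structure (the measure $d\varphi$, Fubini, the auxiliary tail function $F$) is not needed: monotonicity alone already delivers the key inequality in one line.
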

\begin{proof} 
Given $\epsilon >0$, fix $t_\epsilon$ so that $\int_{t_\epsilon}^{\infty}  |f(s)| ds \leq \epsilon$.
Then, for $t \geq t_\epsilon$, split the integral $\DS{\int_{t_0}^t \varphi(s)f(s) ds}$ into two parts to obtain
\begin{eqnarray*}
\abs{\frac{1}{\varphi(t)} \int_{t_0}^t \varphi(s)f(s)ds} =
\abs{\frac{1}{\varphi(t)}\int_{t_0}^{t_\epsilon} \varphi(s) f(s) ds
+ \frac{1}{\varphi(t)}\int_{t_\epsilon}^t \varphi(s) f(s) ds} 
\leq \frac{1}{\varphi(t)}\int_{t_0}^{t_\epsilon} \varphi(s)|f(s)| ds
+ \int_{t_\epsilon}^t  |f(s)| ds.
\end{eqnarray*}
Let $t\to+\infty$ to deduce that
$
0\leq\limsup_{t\to+\infty}\abs{\frac{1}{\varphi(t)}\int_{t_0}^t \varphi(s)f(s)ds} \le \epsilon.
$

Since this is true for any $\epsilon>0$, the result follows.\qed
\end{proof}

\begin{lemma}[{\cite[Lemma~5.9]{attouch2018fast}}]\label{lem:convw}
Let $t_0 > 0$, and let $w : [t_0,+\infty[ \to \R$ be a twice differentiable \footnote{In \cite[Lemma~5.9]{attouch2018fast}, twice differentiability was not stated, but is actually needed for the statement to make sense.} function which is bounded from below. Assume that
\[
t\ddot{w}(t) + \alpha \dot{w}(t) \leq g(t),
\]
for some $\alpha > 1$, almost every $t > t_0$, and some non-negative function $g \in L^1(t_0,+\infty;\R)$. Then, the positive part $[\dot w]_+$ of $\dot w$ belongs to $L^1(t_0,+\infty;\R)$ and $\lim_{t \to +\infty} w(t)$ exists.
\end{lemma}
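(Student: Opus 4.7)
}

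The starting observation is that the left-hand side of the hypothesis is a divergence in disguise: multiplying by $t^{\alpha-1}$ gives
\[
\frac{d}{dt}\bigl(t^{\alpha}\dot{w}(t)\bigr) = t^{\alpha-1}\bigl(t\ddot{w}(t)+\alpha\dot{w}(t)\bigr) \leq t^{\alpha-1} g(t),
\]
for almost every $t > t_0$. Integrating on $[t_0,t]$, one obtains
\[
\dot{w}(t) \leq \frac{t_0^{\alpha}\dot{w}(t_0)}{t^{\alpha}} + \frac{1}{t^{\alpha}}\int_{t_0}^{t} s^{\alpha-1} g(s)\,ds .
\]

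The next step will be to show that the positive part $[\dot{w}]_+$ is in $L^{1}(t_0,+\infty;\R)$. Taking the positive part of the displayed bound, it suffices to check that each of the two terms on the right-hand side is integrable on $[t_0,+\infty[$. For the first term, this follows from $\alpha > 1$, which yields $\int_{t_0}^{+\infty} t^{-\alpha} dt < +\infty$. For the second, I will apply Fubini's theorem:
\[
\int_{t_0}^{+\infty}\!\frac{1}{t^{\alpha}}\!\int_{t_0}^{t} s^{\alpha-1} g(s)\,ds\,dt
= \int_{t_0}^{+\infty}\! s^{\alpha-1} g(s)\!\int_{s}^{+\infty}\!\frac{dt}{t^{\alpha}}\,ds
= \frac{1}{\alpha-1}\int_{t_0}^{+\infty}\! g(s)\,ds < +\infty ,
\]
using $g \in L^1(t_0,+\infty;\R)$ and once more $\alpha > 1$. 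Hence $[\dot{w}]_+ \in L^1(t_0,+\infty;\R)$.

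Finally, to deduce the existence of the limit of $w(t)$, I will split $\dot{w} = [\dot{w}]_+ - [\dot{w}]_-$ and integrate:
\[
w(t) = w(t_0) + \int_{t_0}^{t}[\dot{w}(s)]_+\,ds - \int_{t_0}^{t}[\dot{w}(s)]_-\,ds.
\]
By the previous step, $\int_{t_0}^{t}[\dot{w}(s)]_+\,ds$ admits a finite limit as $t \to +\infty$. The function $t \mapsto \int_{t_0}^{t}[\dot{w}(s)]_-\,ds$ is non-decreasing, hence has a limit in $[0,+\infty]$. If this limit were $+\infty$, then $w(t) \to -\infty$, contradicting the assumption that $w$ is bounded from below; thus the limit is finite and $\lim_{t \to +\infty} w(t)$ exists in $\R$.

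\paragraph{Main obstacle.}
The only delicate point is recognising that the ``bad'' information on the $L^1$ bound comes from the double integral $\int_{t_0}^{+\infty} t^{-\alpha}\int_{t_0}^{t} s^{\alpha-1} g(s)\,ds\,dt$, whose finiteness is not obvious at first glance but is extracted cleanly by Fubini and cancellation between $s^{\alpha-1}$ and $s^{1-\alpha}$, with the constraint $\alpha > 1$ being exactly what makes both the inner integral $\int_s^{+\infty} t^{-\alpha}\,dt$ finite and the reduction to $\int g(s)\,ds$ possible. Everything else is routine once the divergence-form rewriting has been made.
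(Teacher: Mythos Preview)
Your proof is correct and follows the standard argument for this lemma (which is the one given in the cited reference \cite{attouch2018fast}; the present paper does not reproduce a proof). The key steps---rewriting $t\ddot w+\alpha\dot w$ as $t^{1-\alpha}\frac{d}{dt}(t^{\alpha}\dot w)$, integrating, bounding $[\dot w]_+$ via Fubini, and then using boundedness from below to rule out $\int[\dot w]_-=+\infty$---are exactly the intended ones.
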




\end{document}